\documentclass[12pt]{amsart}
\usepackage{amsmath}
\usepackage{amssymb}
\usepackage[all]{xy}

\setcounter{tocdepth}{1}

\let\oldeqn\equation
\let\endoldeqn\endequation

\catcode`\*11
\let\oldeqn*\equation*
\let\endoldeqn*\endequation*
\renewenvironment{equation*}{\oldeqn*\setlength{\thickmuskip}{10mu plus 5mu}}{\endoldeqn*\relax}
\catcode`\*12


\newcommand{\Z}{\mathbb{Z}}

\newcommand{\C}{\mathbb{C}}

\newcommand{\fg}{\mathfrak{g}}
\newcommand{\tfg}{\widetilde{\mathfrak{g}}}
\newcommand{\fn}{\mathfrak{n}}
\newcommand{\fb}{\mathfrak{b}}

\newcommand{\fsl}{\mathfrak{sl}}
\newcommand{\fso}{\mathfrak{so}}
\newcommand{\fsp}{\mathfrak{sp}}


\newcommand{\Pp}{\mathbb{P}}
\newcommand{\cB}{\mathcal{B}}
\newcommand{\cN}{{\mathcal{N}}}
\newcommand{\tcN}{\widetilde{\mathcal{N}}}
\newcommand{\cO}{{\mathcal{O}}}
\newcommand{\cS}{\mathcal{S}}
\newcommand{\cF}{\mathcal{F}}
\newcommand{\tcF}{\widetilde{\mathcal{F}}}

\newcommand{\A}{\mathrm{A}}
\newcommand{\D}{\mathrm{D}}
\newcommand{\E}{\mathrm{E}}
\newcommand{\bv}{\mathbf{v}}
\newcommand{\bw}{\mathbf{w}}

\newcommand{\reg}{\mathrm{reg}}
\newcommand{\subreg}{\mathrm{subreg}}
\newcommand{\pr}{\mathrm{pr}}
\newcommand{\simto}{\mathrel{\overset{\sim}{\to}}}

\DeclareMathOperator{\Hom}{Hom}

\DeclareMathOperator{\Mat}{Mat}
\DeclareMathOperator{\Sing}{Sing}
\DeclareMathOperator{\Rad}{Rad}
\DeclareMathOperator{\tr}{tr}

\DeclareMathOperator{\rk}{rank}
\DeclareMathOperator{\im}{im}

\catcode`\@=11

\newlength{\tabwidth}
\newlength{\tabheight}
\setlength{\tabwidth}{2.5ex}
\setlength{\tabheight}{2.5ex}
\newcommand{\tabstyle}{\textstyle}
\newlength{\tabrulel}
\newlength{\tabruler}
\newlength{\tabrulet}
\newlength{\tabruleb}
\newlength{\tabwidthx}
\newlength{\tabheightx}

\def\gentabbox#1#2#3#4#5#6#7{\vbox to \tabheight{%
 \setlength{\tabrulel}{#3}\setlength{\tabruler}{#4}%
 \setlength{\tabrulet}{#5}\setlength{\tabruleb}{#6}%
 \setlength{\tabwidthx}{#1\tabwidth}\addtolength{\tabwidthx}{0.5\tabrulel}%
   \addtolength{\tabwidthx}{0.5\tabruler}%
 \setlength{\tabheightx}{#2\tabheight}\addtolength{\tabheightx}{-\tabheight}%
 \hbox to #1\tabwidth{%
   \hspace{-0.5\tabrulel}\rule{\tabrulel}{#2\tabheight}\hspace{-\tabrulel}%
   \vbox to #2\tabheight{\hsize=\tabwidthx%
     \vspace{-0.5\tabrulet}\hrule width\tabwidthx height\tabrulet%
     \vspace{-0.5\tabrulet}\vss%
     \hbox to \tabwidthx{\hss#7\hss}%
       \vss\vspace{-0.5\tabruleb}%
     \hrule width\tabwidthx height\tabruleb\vspace{-0.5\tabruleb}}%
   \hspace{-\tabruler}\rule{\tabruler}{#2\tabheight}\hspace{-0.5\tabruler}}%
 \vspace{-\tabheightx}}}
\def\genblankbox#1#2{\vbox to \tabheight{\vfil\hbox to #1\tabwidth{\hfil}}}
\def\tabbox#1#2#3{\gentabbox{#1}{#2}{0.4pt}{0.4pt}{0.4pt}{0.4pt}{#3}}
\def\boldtabbox#1#2#3{\gentabbox{#1}{#2}{1.2pt}{1.2pt}{1.2pt}{1.2pt}{#3}}

\catcode`\:=13 \catcode`\.=13 \catcode`\;=13 
\catcode`\>=13 \catcode`\^=13
\def:#1\\{\hbox{$\tabstyle #1$}}
\def.#1{\tabbox{1}{1}{$\tabstyle #1$}}
\def>#1{\tabbox{2}{1}{$\tabstyle #1$}}
\def^#1{\tabbox{1}{2}{$\tabstyle #1$}}
\def;{\genblankbox{1}{1}\relax}
\catcode`\:=12 \catcode`\.=12 \catcode`\;=12 
\catcode`\>=12 \catcode`\^=7

\newenvironment{tableau}{\bgroup\catcode`\:=13 \catcode`\.=13
 \catcode`\;=13 \catcode`\>=13 \catcode`\^=13 
 \def\b##1##2##3{\boldtabbox{##1}{##2}{\vbox{##3}}}%
 \def\n##1##2##3{\tabbox{##1}{##2}{\vbox{##3}}}%
 \def\c{\tabbox{1}{1}{{}}}
 \vcenter\bgroup\offinterlineskip}{\egroup\egroup}

\catcode`\@=11

\newcommand{\row}[1]{\hbox{$\tabstyle #1$}}
\newcommand{\ghost}{\tabbox{1}{1}{$\cdot$}}

\numberwithin{equation}{section}
\newtheorem{theorem}{Theorem}[section]

\newtheorem{proposition}[theorem]{Proposition}
\newtheorem{corollary}[theorem]{Corollary}
\theoremstyle{definition}

\theoremstyle{remark}

\newtheorem{example}[theorem]{Example}

\title[Singularities of nilpotent orbit closures]{Singularities of nilpotent orbit closures}

\author{Anthony Henderson}
\address{School of Mathematics and Statistics\\
  University of Sydney, NSW 2006\\
  Australia}
\email{anthony.henderson@sydney.edu.au}

\subjclass{Primary 14B05, 17B08; Secondary 14E15, 20G05}

\thanks{The author was supported by ARC Future Fellowship No.~FT110100504.}

\begin{document}

\begin{abstract}
This is an expository article on the singularities of nilpotent orbit closures in simple Lie algebras over the complex numbers. It is slanted towards aspects that are relevant for representation theory, including Maffei's theorem relating Slodowy slices to Nakajima quiver varieties in type A. There is one new observation: the results of Juteau and Mautner, combined with Maffei's theorem, give a geometric proof of a result on decomposition numbers of Schur algebras due to Fang, Henke and Koenig.
\end{abstract}

\maketitle


\section*{Introduction}

In September 2013, I had the privilege of giving a series of three lectures in the Japanese--Australian Workshop on Real and Complex Singularities (JARCS V), held at the University of Sydney. Why would someone who calls himself a representation theorist be speaking at a conference on singularities? One aim of the lectures was to answer that question, by indicating the important role that the study of singular varieties has in geometric representation theory. Another aim was simply to entertain an audience of singularity theorists with some beautiful examples of singularities and their deformations and resolutions: namely, those obtained by considering the closures of nilpotent orbits in simple Lie algebras.

This expository article is based on the slides from those lectures. I have added a few more details and references, while still assuming some familiarity with complex algebraic geometry. Experts in geometric representation theory will be able to think of many more results that ought to have been included to make a comprehensive survey. Indeed, one could imagine a sequel to Collingwood and McGovern's textbook \emph{Nilpotent orbits in semisimple Lie algebras}~\cite{cm}, entitled \emph{Singularities of closures of nilpotent orbits in semisimple Lie algebras}, which would be at least as long as the original. Two highly relevant topics which time and space constraints made it impossible to treat properly are the structure theory and classification of simple Lie algebras (for which see~\cite{humphreys}) and the theory of symplectic singularities (see~\cite{fu,ginzburg,kaledin}).

The goal that shaped my choice of topics and examples was Maffei's 2005 theorem~\cite[Theorem 8]{maffei} relating the resolutions of Slodowy slices in nilpotent orbit closures of type A to Nakajima quiver varieties (stated as Theorem~\ref{thm:maffei} below). Although this theorem is purely geometric, its importance really stems from its representation-theoretic context. Part of the motivation for Nakajima in conjecturing the isomorphism that Maffei proved was that both classes of varieties had been used in geometric constructions of the same representations, namely finite-dimensional representations of the Lie algebras $\fsl_m$. I do not tell that story in this article. Instead, in Section~\ref{sec:maffei}, I explain an application of Maffei's theorem to modular representation theory: by results of Juteau and Mautner (Theorem~\ref{thm:juteau-mautner} below), the decomposition numbers of symmetric groups and Schur algebras depend only on the singularities of the corresponding Slodowy slices, and one can use this to give a geometric proof of a result of Fang--Henke--Koenig~\cite[Corollary 7.1]{fanghenkekoenig}.   

\tableofcontents


\section{Adjoint orbits and the adjoint quotient}

Let $\Mat_n$ be the vector space of $n\times n$ matrices over $\C$, and let $GL_n\subset\Mat_n$ be the group of invertible $n\times n$ matrices. 

Define a \emph{Lie algebra} to be a vector subspace $\fg$ of $\Mat_n$ that is closed under the commutator bracket $[X,Y]:=XY-YX$. In other words, $\fg$ must satisfy
\begin{equation*} 
X,Y\in\fg\Longrightarrow [X,Y]\in\fg.
\end{equation*}
To each such Lie algebra $\fg$ there corresponds a \emph{matrix group} $G$, which is defined to be the subgroup of $GL_n$ generated by all $\exp(X)$ for $X\in\fg$. Here $\exp$ denotes the matrix exponential, defined by the usual absolutely convergent series
\begin{equation*}
\exp(X)=I+X+\frac{1}{2}X^2+\frac{1}{6}X^3+\cdots,
\end{equation*}
where $I$ denotes the $n\times n$ identity matrix. The matrix $\exp(X)$ is always invertible because $\exp(X)\exp(-X)=I$.

One can easily show the following identity for any $X,Y\in\Mat_n$:
\begin{equation*}
\exp(X)Y\!\exp(-X)=Y+[X,Y]+\frac{1}{2}[X,[X,Y]]+\frac{1}{6}[X,[X,[X,Y]]]+\cdots.
\end{equation*} 
It follows that
\begin{equation*} 
g\in G,\ Y\in \fg \Longrightarrow gYg^{-1}\in\fg.
\end{equation*}
That is, the group $G$ acts on $\fg$ by conjugation: we write $g\cdot Y:=gYg^{-1}$ for short.
This is called the \emph{adjoint action} of $G$ on $\fg$, and its orbits are the \emph{adjoint orbits}.

We assume henceforth that $\fg$ is a \emph{simple} Lie algebra. This means that $\dim\fg>1$ and $\fg$ has no nontrivial subspace $S$ such that
\begin{equation*} 
X\in\fg,\ Y\in S\Longrightarrow [X,Y]\in S.
\end{equation*}
A consequence of this assumption (via a theorem of Cartan) is that the trace form $(X,Y)\mapsto \tr(XY)$ is nondegenerate on $\fg$. We can use this form to identify the space $\fg^*$ of linear functions on $\fg$ with $\fg$ itself, in a $G$-equivariant way. The algebra $\C[\fg]$ of polynomial functions on $\fg$ has a unique Poisson bracket extending the commutator bracket on $\fg$; thus $\fg$ is a Poisson variety and the adjoint action preserves this structure.

\begin{example}
The primary example of a simple Lie algebra is
\[ 
\begin{split}
\fsl_n&:=\{X\in\Mat_n\,|\,\tr(X)=0\},\ \text{ with corresponding group}\\
SL_n&:=\{g\in GL_n\,|\,\det(g)=1\},
\end{split}
\]
where $n\geq 2$.
The adjoint orbits in $\fsl_n$ are exactly the familiar \emph{similarity classes}. (Ordinarily one would define two matrices $X,Y$ to be similar if there is some $g\in GL_n$ such that $Y=gXg^{-1}$. But since any $g\in GL_n$ can be written as a product $g'g''$ where $g'\in SL_n$ and $g''$ is a scalar matrix, it comes to the same thing if one requires $g\in SL_n$.) 
\end{example}

We want to take a quotient of $\fg$ by the adjoint action of $G$. In the setting of algebraic geometry, this means that we need to consider the algebra $\C[\fg]^G$ of $G$-invariant polynomial functions on $\fg$. By definition, a function on $\fg$ is \emph{$G$-invariant} if and only if it is constant on each adjoint orbit. The classic result about these functions is:

\begin{theorem}[Chevalley]\label{thm:chevalley}
The algebra $\C[\fg]^G$ is freely generated by some homogeneous polynomials $\chi_1(X),\chi_2(X),\cdots,\chi_\ell(X)$. So as an algebra it is isomorphic to $\C[t_1,t_2,\cdots,t_\ell]$.
\end{theorem}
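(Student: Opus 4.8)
The plan is to reduce the statement to a problem about a finite reflection group and then invoke the Chevalley--Shephard--Todd theorem. First I would fix a Cartan subalgebra $\mathfrak{h}\subset\fg$ (a maximal subspace on which the bracket vanishes identically); its dimension is by definition the rank $\ell$ of $\fg$, and the normalizer of $\mathfrak{h}$ in $G$ acts on $\mathfrak{h}$ through a finite group $W$, the Weyl group, which is generated by the reflections $s_\alpha$ in the root hyperplanes. The whole argument rests on the \emph{Chevalley restriction theorem}: the map $\C[\fg]\to\C[\mathfrak{h}]$ sending a polynomial function to its restriction to $\mathfrak{h}$ induces a graded algebra isomorphism $\operatorname{res}\colon\C[\fg]^G\simto\C[\mathfrak{h}]^W$.

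Second, I would establish this isomorphism in two halves. Injectivity is easy: the set $G\cdot\mathfrak{h}$ of elements conjugate into $\mathfrak{h}$ contains every regular semisimple element and is therefore Zariski dense in $\fg$, so a $G$-invariant polynomial vanishing on $\mathfrak{h}$ vanishes on $G\cdot\mathfrak{h}$ and hence identically; moreover the restriction of a $G$-invariant function is automatically invariant under $N_G(\mathfrak{h})$, hence under $W$, so $\operatorname{res}$ does land in $\C[\mathfrak{h}]^W$. Surjectivity is the delicate point: given $f\in\C[\mathfrak{h}]^W$, one first observes that two elements of $\mathfrak{h}$ which are $G$-conjugate are already $W$-conjugate (true at least on the dense open locus of regular elements), so $f$ determines a well-defined $G$-invariant function $\tilde f$ on the dense set $G\cdot\mathfrak{h}$ by $\tilde f(g\cdot h)=f(h)$; the real work is to show $\tilde f$ extends to a polynomial on all of $\fg$. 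This can be done either by Kostant's argument, via the decomposition of the symmetric algebra $S(\fg)$ into invariants times harmonics, or by a normality-and-degree-count argument for the adjoint quotient morphism.

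Third, with $\C[\fg]^G\cong\C[\mathfrak{h}]^W$ in hand, I would apply the theorem of Chevalley (and Shephard--Todd) that the ring of invariants of a finite group acting on a finite-dimensional vector space by reflections is a polynomial algebra: choosing homogeneous generators $f_1,\dots,f_k$ of $\C[\mathfrak{h}]^W$ of minimal degrees --- there are exactly $k=\ell$ of them because $\operatorname{Frac}(\C[\mathfrak{h}]^W)$ has transcendence degree $\dim\mathfrak{h}=\ell$ over $\C$, $\C[\mathfrak{h}]$ being module-finite over $\C[\mathfrak{h}]^W$ --- one shows they are algebraically independent via a Jacobian argument in which the reflection hypothesis enters crucially: a nontrivial relation of minimal degree would force the Jacobian determinant $\det(\partial f_i/\partial x_j)$ to vanish, whereas comparing the divisor of that determinant with the product of the root linear forms $\prod_\alpha \alpha$ shows it is a nonzero constant multiple of that product, a contradiction. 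Transporting the $f_i$ back through $\operatorname{res}^{-1}$ yields homogeneous $\chi_1,\dots,\chi_\ell\in\C[\fg]^G$ that freely generate it, which is exactly the asserted isomorphism $\C[\fg]^G\cong\C[t_1,\dots,t_\ell]$. I expect the surjectivity of the Chevalley restriction map to be the main obstacle, since both the density input and the regularity of the extended function $\tilde f$ require genuine structure theory of $\fg$; the Shephard--Todd--Chevalley step, though also substantial, is a self-contained statement about finite reflection groups that one can simply quote.
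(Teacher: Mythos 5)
The paper does not prove this theorem: it is quoted as a classical result of Chevalley, so there is no in-paper argument to compare against. Your proposal is the standard textbook proof (Chevalley restriction to $\C[\mathfrak{h}]^W$, then the Chevalley--Shephard--Todd theorem for the finite reflection group $W$), and as an outline it is essentially correct, including the correct identification of surjectivity of the restriction map as the genuinely hard step and a sensible accounting for why the number of generators equals $\ell$. One slip is worth fixing because it would actually break your density argument: a Cartan subalgebra is \emph{not} ``a maximal subspace on which the bracket vanishes identically.'' Maximal abelian subalgebras of $\fg$ can have dimension strictly larger than the rank and can consist entirely of nilpotent elements (e.g.\ the abelian nilradical of a maximal parabolic in $\fsl_4$ has dimension $4 > 3$); for such a subalgebra $G\cdot\mathfrak{h}$ lies inside the nilpotent cone and is nowhere dense, so the injectivity step fails. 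You need $\mathfrak{h}$ to be a maximal \emph{toral} subalgebra (all elements ad-semisimple), equivalently a self-normalizing nilpotent subalgebra; it is the semisimplicity of its elements that makes $G\cdot\mathfrak{h}$ contain the regular semisimple locus and hence be dense. With that correction, and granting the two quoted black boxes (the extension of $\tilde f$ to a genuine polynomial, and Shephard--Todd), the argument is the standard and correct one.
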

\noindent
The number of generators of $\C[\fg]^G$, denoted $\ell$, is called the \emph{rank} of $\fg$. 

\begin{example}
Let $\fg=\fsl_n$. Since the characteristic polynomial of a matrix $X$ is a similarity invariant, so is each of the coefficients of the characteristic polynomial. For matrices in $\fsl_n$, one of these coefficients, namely the trace, is zero by definition. The other $n-1$ coefficients provide $SL_n$-invariant polynomial functions: for $1\leq i\leq n-1$, we set
\[ 
\chi_i(X):=(-1)^{i+1}\times\text{coefficient of }t^{n-i-1}\text{ in }\det(tI-X).
\]
The sign here is chosen so that $\chi_i(X)$ equals the $(i+1)$th elementary symmetric polynomial in the eigenvalues of $X$. Note that $\chi_i(X)$ is a homogeneous polynomial function of $X$ of degree $i+1$.
It turns out that the functions $\chi_i(X)$ freely generate $\C[\fsl_n]^{SL_n}$, as per Theorem~\ref{thm:chevalley}. So $\fsl_n$ has rank $n-1$.
\end{example}

This is a convenient point at which to mention, for later reference, that there is a complete classification of simple Lie algebras (see~\cite{humphreys} for details). The isomorphism classes are labelled by symbols of the form $\mathrm{X}_\ell$ where $\mathrm{X}$ is one of the `Lie types' in the list $\A,\mathrm{B},\mathrm{C},\D,\E,\mathrm{F},\mathrm{G}$ and $\ell$ denotes the rank, which can take the following possible values:
\begin{itemize}
\item[$\A_\ell$:] $\ell$ can be any positive integer. The simple Lie algebra of type $\A_\ell$ is $\fsl_{\ell+1}$ (more correctly, $\fsl_{\ell+1}$ is a representative of the isomorphism class of simple Lie algebras of type $\A_\ell$).
\item[$\mathrm{B}_\ell$:] $\ell\geq 2$. The simple Lie algebra of type $\mathrm{B}_\ell$ is the special orthogonal Lie algebra $\fso_{2\ell+1}$, consisting of the skew-symmetric matrices in $\Mat_{2\ell+1}$.
\item[$\mathrm{C}_\ell$:] $\ell\geq 3$. The simple Lie algebra of type $\mathrm{C}_\ell$ is the symplectic Lie algebra $\fsp_{2\ell}$, whose definition we will not need.
\item[$\D_\ell$:] $\ell\geq 4$. The simple Lie algebra of type $\D_\ell$ is $\fso_{2\ell}$.
\item[$\E_\ell$:] $\ell\in\{6,7,8\}$.
\item[$\mathrm{F}_\ell$:] $\ell=4$ only.
\item[$\mathrm{G}_\ell$:] $\ell=2$ only. 
\end{itemize}
We will revisit this classification from a different viewpoint in Section~\ref{sec:min-degen}.

The \emph{adjoint quotient} map is the map obtained by combining all the generating $G$-invariant polynomial functions on $\fg$:
\[ \chi:\fg\to\C^{\ell}:X\mapsto(\chi_1(X),\cdots,\chi_\ell(X)). \]
One can then consider the fibres of this map, namely the affine algebraic varieties
\[
\chi^{-1}(u)=\{X\in\fg\,|\,\chi_1(X)=u_1,\;\cdots,\;\chi_\ell(X)=u_\ell\} 
\]
for $u=(u_1,\cdots,u_\ell)\in\C^\ell$. By definition, each such fibre is stable under the adjoint action, and hence is a union of some adjoint orbits; possibly more than one, because different orbits cannot necessarily be separated using $G$-invariant polynomial functions. 

These varieties have very good geometric properties.

\begin{theorem}[Kostant~\cite{kostant}] \label{thm:kostant}
For any $u\in\C^\ell$, the fibre $\chi^{-1}(u)$:
\begin{itemize}
\item consists of finitely many adjoint orbits -- for generic $u$, only one;
\item contains a unique dense orbit, and is hence irreducible;
\item has codimension $\ell$ in $\fg$, and is hence a complete intersection;
\item is nonsingular in codimension $1$, and is hence normal. 
\end{itemize}
\end{theorem}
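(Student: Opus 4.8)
The plan is to use the principal $\fsl_2$-triple and the structure of regular elements to reduce everything to understanding the fibre over the origin, $\cN=\chi^{-1}(0)$, the nilpotent cone, and then to transport that information to arbitrary fibres. First I would recall that an element $X\in\fg$ is called \emph{regular} if its centralizer $\fg^X=\{Y\in\fg\,|\,[X,Y]=0\}$ has the minimal possible dimension, namely $\ell$; these form a dense open subset $\fg^{\reg}$. The key technical input is the existence of a principal $\fsl_2$-triple $(e,h,f)$ with $e$ regular nilpotent, and the affine subspace $\cS=e+\fg^f$ (the \emph{Kostant section}, a special case of a Slodowy slice). One shows by a weight argument under the $h$-grading that $\cS\subseteq\fg^{\reg}$, that $\dim\cS=\ell$, and — this is the crucial point — that the restriction $\chi|_{\cS}\colon\cS\to\C^\ell$ is an \emph{isomorphism} of varieties. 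Granting this, $\cS$ is a section of $\chi$ meeting each fibre, and each fibre is a complete intersection of the right codimension near any regular point.

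Next I would establish the orbit-theoretic statements. The fact that $\chi^{-1}(u)$ meets $\fg^{\reg}$ in a single orbit follows because the regular locus of a fibre is the $G$-orbit of the unique point of $\cS$ in that fibre (using that the map $G\times\cS\to\fg^{\reg}$, $(g,s)\mapsto g\cdot s$, is smooth and surjective). Finiteness of the total number of orbits in a fibre is more delicate: for the nilpotent cone it is a theorem that there are finitely many nilpotent orbits (via the Jacobson--Morozov theorem, parametrizing nilpotents by $\fsl_2$-triples up to conjugacy, of which there are finitely many), and for general $u$ one uses the Jordan decomposition $X=X_s+X_n$ to reduce to the nilpotent cone of the reductive centralizer $\fg^{X_s}$, of which there are only finitely many up to conjugacy. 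Density of the regular orbit in $\chi^{-1}(u)$, hence irreducibility, then follows from a dimension count: every non-regular orbit has dimension strictly less than $\dim\fg-\ell$, while the regular orbit in the fibre has dimension exactly $\dim\fg-\ell$.

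For the codimension and complete-intersection claim, the bound $\dim\chi^{-1}(u)\le\dim\fg-\ell$ holds because the fibre is cut out by $\ell$ equations and is nonempty (it contains the section point), forcing every component to have codimension at most $\ell$; combined with irreducibility and the dimension of the regular orbit it has codimension \emph{exactly} $\ell$, so $\chi$ is flat with equidimensional fibres and the fibre is a set-theoretic, hence scheme-theoretic, complete intersection. Finally, for normality I would invoke Serre's criterion: a complete intersection is automatically Cohen--Macaulay, so $(S_2)$ is free, and it remains to check regularity in codimension $1$, i.e.\ that the singular locus of $\chi^{-1}(u)$ has codimension $\ge2$. The singular points are contained in the non-regular locus; one computes that $X$ is a smooth point of $\chi^{-1}(\chi(X))$ precisely when $d\chi_X$ is surjective, which happens iff $X$ is regular, and then shows the \emph{subregular} part of the fibre has codimension exactly $2$ (equivalently, the subregular nilpotent orbit has dimension $\dim\fg-\ell-2$), while everything of smaller dimension is automatically codimension $\ge2$ as well.

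The main obstacle is proving that $\chi|_{\cS}$ is an isomorphism: one inclusion (that it is a bijection, even) already requires knowing that each fibre contains exactly one regular orbit and that $\cS$ meets each regular orbit exactly once, and upgrading the bijection to an isomorphism of varieties needs a Jacobian computation showing $d(\chi|_{\cS})$ is everywhere invertible, which in turn rests on the $\fsl_2$-representation theory of $\fg$ under $\mathrm{ad}\,h$ and the non-degeneracy of the trace form pairing $\fg^e$ with $\fg^f$. Once that linear-algebraic heart is in place, the remaining four bullet points follow by the dimension-counting and Serre-criterion arguments sketched above.
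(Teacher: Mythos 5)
The paper offers no proof of this theorem: it is quoted from Kostant's paper \cite{kostant}, with only one supplementary remark, namely that the nonsingularity in codimension $1$ follows from the fact that every adjoint orbit has even dimension (Borel--Kirillov--Kostant), which in turn comes from the symplectic/Poisson structure on orbits. Your outline is essentially a faithful reconstruction of Kostant's original argument -- the Kostant section $\cS=e+\fg^f$, the isomorphism $\chi|_{\cS}\colon\cS\to\C^\ell$, Jordan decomposition plus finiteness of nilpotent orbits for the finiteness claim, the dimension count for irreducibility and the complete-intersection property, and Serre's criterion for normality -- and you correctly identify the isomorphism $\chi|_{\cS}\simeq\C^\ell$ (equivalently, the uniqueness of the regular orbit in each fibre together with the Jacobian computation via the $\mathrm{ad}\,h$-grading and the trace-form pairing of $\fg^e$ with $\fg^f$) as the technical heart. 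The one place where your route diverges from the paper's remark, and where your sketch is weakest, is the $(R_1)$ condition: you propose to ``compute that the subregular part of the fibre has codimension exactly $2$,'' but that computation is precisely the assertion that no orbit in the fibre has codimension $1$, so as stated it begs the question. The clean way to close this -- and the one the paper points to -- is the parity argument: every adjoint orbit carries a nondegenerate closed $2$-form (the Kirillov--Kostant--Souriau form transported via the trace form), hence has even dimension; since the dense regular orbit has dimension $\dim\fg-\ell$, every other orbit in the fibre has dimension at most $\dim\fg-\ell-2$. I would substitute that for your subregular computation; with that replacement the outline is a correct and complete plan, modulo the substantial linear-algebraic work on the Kostant section that you have rightly flagged rather than elided.
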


\begin{example}
When $\fg=\fsl_n$, $\chi^{-1}(u)$ consists of matrices with a specified characteristic polynomial, or equivalently a specified multiset of eigenvalues. The different orbits in $\chi^{-1}(u)$ come from the different possible sizes of Jordan blocks; the dense orbit consists of the \emph{regular} matrices with the specified characteristic polynomial, i.e.\ those with a single Jordan block for each eigenvalue.
\end{example}

The nonsingularity in codimension $1$ of Theorem~\ref{thm:kostant} follows from the result (obtained independently by Borel, Kirillov and Kostant, see~\cite[Proposition 15]{kostant}) that every adjoint orbit has even dimension. This in turn follows from an easy calculation showing that the closure of any adjoint orbit is a Poisson subvariety of $\fg$, and the induced skew-symmetric form on the cotangent bundle of the orbit is nondegenerate.

\section{Nilpotent orbits and their closures}

We now come to the main object of study. The \emph{nilpotent cone} $\cN\subset\fg$ is one of the fibres of the adjoint quotient, namely $\cN:=\chi^{-1}(0)$. Since it is defined by the vanishing of some homogeneous polynomials, $\cN$ is stable under scalar multiplication (this is the sense in which it is a cone). By Theorem~\ref{thm:kostant}, $\cN$ is the union of finitely many adjoint orbits, which are called the \emph{nilpotent orbits} of $\fg$. It follows immediately from this finiteness that each nilpotent orbit is stable under nonzero scalar multiplication.

If $\cO$ is a nilpotent orbit, then its closure $\overline{\cO}$ is stable under the adjoint action and contained in $\cN$, so it is the union of $\cO$ and some other nilpotent orbits $\cO'$ which must have smaller dimension than $\cO$. This gives rise to a partial order $\leq$ on the nilpotent orbits, the \emph{closure order}, where $\cO'\leq\cO$ means that $\cO'\subseteq\overline{\cO}$. There is always a unique minimum orbit for this partial order, namely the orbit $\{0\}$ consisting solely of $0$; by Theorem~\ref{thm:kostant}, there is also always a unique maximum orbit for this partial order, namely the \emph{regular} orbit $\cO_{\reg}$ which is dense in $\cN$.

\begin{example} \label{ex:sln-orbits}
When $\fg=\fsl_n$, $\cN$ consists of all $n\times n$ matrices that are \emph{nilpotent} in the sense that their characteristic polynomial is the same as that of the zero matrix; or equivalently, that their only eigenvalue is $0$; or equivalently, that some power of them equals the zero matrix.
In this case the splitting of $\cN$ into nilpotent orbits is determined purely by the different sizes of Jordan blocks a nilpotent matrix can have. The order of the blocks being immaterial, it is conventional to list the sizes in decreasing order. Thus the set of nilpotent orbits is in bijection with the set of \emph{partitions} of $n$, where a partition $\lambda$ of $n$ means a weakly decreasing sequence $(\lambda_1,\lambda_2,\cdots)$ of nonnegative integers adding up to $n$. (It is notationally convenient to make the sequence infinite, with all terms being zero after a certain point; but in writing specific partitions we omit the zeroes.) We write $\cO_\lambda$ for the nilpotent orbit corresponding to the partition $\lambda$. In this notation, $\{0\}=\cO_{(1,1,\cdots,1)}$ and $\cO_{\reg}=\cO_{(n)}$.
\end{example}

\begin{example}
As a sub-case of the previous example, take $\fg=\fsl_2$. Then the nilpotent cone is a singular quadric hypersurface:
\[ \cN=\left\{\left.\begin{bmatrix}a&b\\c&-a\end{bmatrix}\,\right|\,a,b,c\in\C,\,a^2+bc=0\right\}.\]
In this case, $\{0\}$ and $\cO_{\reg}$ are the only two orbits (i.e.\ $\cO_{\reg}=\cN\setminus\{0\}$).
\end{example}

\begin{example} \label{ex:sl3-orbits}
As another sub-case of Example~\ref{ex:sln-orbits}, take $\fg=\fsl_3$. There are three partitions of $3$: $(1,1,1)$, $(2,1)$ and $(3)$. Hence we have three nilpotent orbits in $\fsl_3$, and the closure order is such that
\[
\{0\}<\cO_{(2,1)}<\cO_{\reg}.
\] 
Note that
\begin{equation*}
\begin{split}
\cO_{(2,1)}&=\{X\in\fsl_3\,|\,\rk(X)=1\},\text{ so}\\
\overline{\cO_{(2,1)}}&=\{X\in\fsl_3\,|\,\rk(X)\leq 1\}\\
&=\{X\in\fsl_3\,|\,\text{every $2\times 2$ minor of }X\text{ is }0\}.
\end{split}
\end{equation*}
Here we have used the fact that a rank-$1$ matrix with trace $0$ is automatically nilpotent. Thus the closed subvariety $\overline{\cO_{(2,1)}}$ of $\fsl_3$ is defined by the vanishing of nine degree-$2$ polynomials, and none of these defining equations is redundant. Since the codimension of $\overline{\cO_{(2,1)}}$ in $\fsl_3$ is easily seen to be $4$, we conclude that $\overline{\cO_{(2,1)}}$ does not share with $\cN$ the property of being a complete intersection. (In fact, $\overline{\cO}$ is never a complete intersection unless $\cO=\{0\}$ or $\cO=\cO_\reg$; see~\cite{brionfu,namikawa}.)
\end{example}

We write $\cO'\prec\cO$ to mean that $\cO$ covers $\cO'$ in the closure order, in the sense that $\cO'<\cO$ and there is no orbit $\cO''$ such that $\cO'<\cO''<\cO$. In this case one says that $\cO'$ is a \emph{minimal degeneration} of $\cO$. 

It is a fact that there is always a unique orbit $\cO_{\min}$ such that $\{0\}\prec\cO_{\min}$, the \emph{minimal orbit} (short for `minimal nonzero orbit'). There is also always a unique orbit $\cO_{\subreg}$ such that $\cO_{\subreg}\prec\cO_{\reg}$, the \emph{subregular orbit}. See~\cite[Chapter 4]{cm} for the proofs.

In Example~\ref{ex:sln-orbits} we saw a combinatorial parametrization of the nilpotent orbits for $\fsl_n$. We can express the closure order explicitly using these parameters. 
\begin{theorem}[Gerstenhaber~\cite{gerstenhaber}] \label{thm:gerstenhaber}
The closure order on nilpotent orbits for $\fsl_n$ is given by:
\[
\cO_\mu\leq\cO_\lambda\quad\Longleftrightarrow\quad
\begin{array}{rcl}
\mu_1&\leq&\lambda_1,\\
\mu_1+\mu_2&\leq&\lambda_1+\lambda_2,\\
\mu_1+\mu_2+\mu_3&\leq&\lambda_1+\lambda_2+\lambda_3,\\
\vdots\qquad\vdots\quad&\vdots&\quad\vdots\qquad\vdots
\end{array}
\]
\end{theorem}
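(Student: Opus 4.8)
The plan is to prove Theorem~\ref{thm:gerstenhaber} --- the combinatorial description of the closure order on nilpotent orbits in $\fsl_n$ --- by translating the geometric condition $\cO_\mu\subseteq\overline{\cO_\lambda}$ into a statement about ranks of matrix powers, and then into the dominance order on partitions. The bridge between geometry and combinatorics is the observation that for a nilpotent matrix $X$ with Jordan type $\lambda$, the numbers $\rk(X^k)$ for $k\geq 1$ depend only on $\lambda$, and conversely determine $\lambda$. Concretely, a single Jordan block of size $\lambda_i$ contributes $\max(\lambda_i-k,0)$ to $\rk(X^k)$, so $\rk(X^k)=\sum_i\max(\lambda_i-k,0)$. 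The functions $X\mapsto\rk(X^k)$ are upper semicontinuous (the locus where $\rk(X^k)\leq r$ is closed, being cut out by vanishing of minors), so each of them is $\leq$ its generic value on any irreducible closed set. Applying this to $\overline{\cO_\lambda}$ gives one direction immediately: if $\cO_\mu\subseteq\overline{\cO_\lambda}$ then $\rk(X^k)\leq\rk(Y^k)$ for $X\in\cO_\mu$, $Y\in\cO_\lambda$ and all $k$, i.e.\ $\sum_i\max(\mu_i-k,0)\leq\sum_i\max(\lambda_i-k,0)$ for all $k\geq 0$; a short elementary argument then shows this system of inequalities is equivalent to the dominance inequalities $\sum_{i\leq j}\mu_i\leq\sum_{i\leq j}\lambda_i$ for all $j$.

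For the reverse implication, the cleanest route is to show that dominance is generated by the covering relations in which $\mu$ is obtained from $\lambda$ by a single elementary move --- subtracting $1$ from some part $\lambda_a$ and adding $1$ to some later part $\lambda_b$ (with $a<b$, possibly after reordering) --- and then to exhibit, for each such elementary move, an explicit one-parameter family of nilpotent matrices degenerating a matrix of type $\lambda$ to one of type $\mu$. For the family, I would work with the matrix $Y$ in Jordan form and perturb it within the two blocks of sizes $\lambda_a$ and $\lambda_b$: send $Y$ along a path $Y_t$ that for $t\neq 0$ remains in $\cO_\lambda$ but whose limit $Y_0$ has Jordan type $\mu$ on those two blocks. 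A standard device is to conjugate $Y$ by a diagonal torus element $g_t=\mathrm{diag}(t^{c_1},\ldots,t^{c_n})$ chosen so that the $t\to 0$ limit of $g_t\cdot Y$ exists and kills exactly one of the superdiagonal $1$'s in the longer block while creating a new one linking into the shorter block; computing $\rk$ of the powers of the limit confirms its type is $\mu$. Since $\mu\in\overline{\cO_\lambda}$ for elementary moves and closure is transitive, $\mu\leq\lambda$ in dominance implies $\cO_\mu\subseteq\overline{\cO_\lambda}$.

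The main obstacle is the reverse direction, and within it the genuinely combinatorial lemma that the dominance order is the transitive closure of the elementary moves: one must check that whenever $\mu<\lambda$ strictly in dominance, there is some $\nu$ with $\mu\leq\nu\prec\lambda$ realized by a single such move. This is a careful but classical bookkeeping argument --- pick the first index $j$ where the partial sums differ, and adjust the parts around there --- and I would either prove it by a brief induction on $\sum_i|\lambda_i-\mu_i|$ or simply cite it (e.g.\ from Macdonald's book on symmetric functions, or \cite[Chapter~6]{cm}). The geometric input of constructing the degenerating family for an elementary move is routine linear algebra once set up correctly, and the upper-semicontinuity argument for the forward direction is entirely standard; so the write-up should foreground the rank-of-powers dictionary and the equivalence of the two systems of inequalities, and treat the elementary-move lemma as the combinatorial heart.
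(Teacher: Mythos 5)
The paper itself gives no proof of this theorem --- it is quoted directly from Gerstenhaber --- so there is no internal argument to compare yours against; your proposal has to stand on its own. Its overall architecture is the classical one and is sound: upper semicontinuity of $X\mapsto\rk(X^k)$ gives the forward implication, and since $\rk(X^k)=\sum_i\max(\mu_i-k,0)=n-\sum_{j\leq k}\mu^{\mathbf{t}}_j$ (where $\mu^{\mathbf{t}}$ is the transpose partition), the system of rank inequalities is exactly dominance for the transposes, hence equivalent to $\mu\trianglelefteq\lambda$ by the standard fact that transposition reverses dominance. The converse correctly reduces, via the Brylawski/Macdonald lemma that dominance is generated by elementary moves, to exhibiting a degeneration of a two-block type $(p,q)$ to $(p-1,q+1)$ when $p\geq q+2$.

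The one step that fails as literally described is your degenerating family. If $Y$ is in Jordan form and $g_t=\mathrm{diag}(t^{c_1},\dots,t^{c_n})$, then conjugation multiplies the $(i,j)$ entry by $t^{c_i-c_j}$: it can only rescale entries that are already nonzero, so the $t\to 0$ limit can delete superdiagonal $1$'s but can never ``create a new one linking into the shorter block''. Deleting entries of a Jordan form only splits blocks, so the limits obtainable this way from type $(p,q)$ are refinements such as $(p-1,q,1)$, never $(p-1,q+1)$. The repair is standard: either first pass to a different representative $Y'\in\cO_\lambda$ that already contains the linking entry between the two blocks, and choose the weights $c_i$ so that the diagonal limit kills the unwanted superdiagonal entry of the long block while keeping the linking entry; or dispense with conjugation altogether and write down directly a curve $Y_t$ of nilpotent matrices whose powers have ranks of type $(p,q)$ for $t\neq 0$ and of type $(p-1,q+1)$ at $t=0$ --- the limit point lies in $\overline{\cO_\lambda}$ simply because Zariski closure contains limits of curves. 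With that fix the argument is complete.
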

\noindent
The partial order on partitions arising in Theorem~\ref{thm:gerstenhaber} is called the \emph{dominance order}, written $\mu\trianglelefteq\lambda$.

To describe the minimal degenerations in this case, it is helpful to use the graphical representation of a partition $\lambda=(\lambda_1,\lambda_2,\cdots)$ as a left-justified diagram of boxes, with $\lambda_i$ boxes in the $i$th row from the top. A \emph{corner box} of such a diagram means a box which has no box either below or to the right of it.
\begin{proposition}[{\cite[Section 1]{kp1}}] \label{prop:sln-mindeg} 
The minimal degenerations of nilpotent orbits for $\fsl_n$ are described as follows:
\[
\cO_\mu\prec\cO_\lambda\quad\Longleftrightarrow\quad
\begin{array}{l}
\text{$\mu$ differs from $\lambda$ by moving a corner box}\\
\textbf{either}\ \text{down from one row to the next,}\\
\textbf{or}\  \text{left from one column to the next.}
\end{array}
\]
When $\cO_\mu\prec\cO_\lambda$, the codimension of $\cO_\mu$ in $\overline{\cO_\lambda}$ is twice the difference in row numbers of the box that is moved.
\end{proposition}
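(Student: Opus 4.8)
The plan is to reduce everything to the combinatorics of the dominance order, using Theorem~\ref{thm:gerstenhaber} as a black box for the closure order and Kostant's codimension count only implicitly. First I would establish the purely combinatorial characterization of covers in the dominance order: if $\mu\vartriangleleft\lambda$ are partitions of $n$, then $\mu\prec\lambda$ in the dominance order if and only if the Young diagram of $\mu$ is obtained from that of $\lambda$ by relocating a single box, either from the end of some row to the end of a strictly shorter row below it, or from the bottom of some column to the bottom of a strictly shorter column to its left; and moreover the box that is moved may be taken to be a corner box of $\lambda$ and to land in a corner position of $\mu$. The ``if'' direction is easy: such a move clearly produces a partition $\mu$ with $\mu\vartriangleleft\lambda$, and one checks directly that no partition $\nu$ can satisfy $\mu\vartriangleleft\nu\vartriangleleft\lambda$ because the partial sums of $\mu$ and $\lambda$ differ by exactly one in a single block of consecutive indices.

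For the ``only if'' direction, suppose $\mu\vartriangleleft\lambda$ and let $i$ be the smallest index with $\mu_1+\cdots+\mu_i<\lambda_1+\cdots+\lambda_i$ and $j$ the smallest index $>i$ with $\mu_1+\cdots+\mu_j=\lambda_1+\cdots+\lambda_j$ (which exists since both are partitions of $n$). One then argues that if the gap $\sum_{k\le i}(\lambda_k-\mu_k)$ is larger than $1$, or if $j>i+1$, one can interpose a partition strictly between $\mu$ and $\lambda$, contradicting minimality. This is the standard argument for covers in the dominance lattice; the cleanest route is to show that if $\mu\prec\lambda$ then $\lambda$ and $\mu$ agree outside a window of at most two rows (or, after transposing, two columns), and in that window the difference is a single box moved down by one row (resp.\ left by one column). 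The transpose symmetry $\mu\vartriangleleft\lambda\iff\lambda^t\vartriangleleft\mu^t$ is what forces the dichotomy between ``row moves'' and ``column moves'': a cover that is not a single-row adjustment becomes one after transposing. I would also note that a box moved down by one row from the end of row $a$ to the end of row $a+1$ is automatically a corner box of $\lambda$ and lands in a corner position of $\mu$, because the intervening rows must all have equal length.

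Next, I would translate this into the statement of the proposition by invoking Theorem~\ref{thm:gerstenhaber}: since $\cO_\mu\le\cO_\lambda\iff\mu\trianglelefteq\lambda$, the covering relation $\cO_\mu\prec\cO_\lambda$ corresponds exactly to $\mu\prec\lambda$ in the dominance order, and the previous paragraph identifies those covers as the corner-box moves described. Finally, for the codimension formula, I would compute $\dim\cO_\lambda$ using the well-known formula $\dim\cO_\lambda = n^2-\sum_k (\lambda^t_k)^2$ (equivalently $n^2-1-\dim\overline{\cO_\lambda}^{\,G}$ adjusted for the $\fsl$ versus $\fgl$ normalization, but the difference of dimensions is the same), so that $\operatorname{codim}_{\overline{\cO_\lambda}}\cO_\mu = \dim\cO_\lambda-\dim\cO_\mu = \sum_k\big((\mu^t_k)^2-(\lambda^t_k)^2\big)$. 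When $\mu$ is obtained from $\lambda$ by moving a box down from row $a$ to row $b$ (so $b>a$), writing everything in terms of the conjugate partitions shows this sum telescopes to $2(b-a)$; the column-move case follows by transposing, which preserves $\dim\cO$ since $\dim\cO_\lambda=\dim\cO_{\lambda^t}$, and which turns ``difference in column numbers'' back into ``difference in row numbers.''

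The main obstacle is the ``only if'' direction of the combinatorial cover characterization: one has to rule out, systematically, every way in which $\mu$ could differ from $\lambda$ by something other than a single adjacent-row or adjacent-column box move, by always exhibiting an intermediate partition. The bookkeeping with partial sums is entirely elementary but requires care to organize so that the row/column dichotomy emerges cleanly rather than as a case explosion; handling it via the transpose symmetry is what keeps it short. Everything else — the ``if'' direction, the corner-box observation, and the telescoping codimension computation — is routine once the classification of covers is in hand.
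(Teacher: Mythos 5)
First, note that the paper does not actually prove this proposition: it is quoted from Kraft--Procesi \cite[Section 1]{kp1}, so your argument has to stand on its own. Your overall route --- Theorem~\ref{thm:gerstenhaber} to identify $\cO_\mu\prec\cO_\lambda$ with covers in the dominance order, the Brylawski-type classification of those covers, and the dimension formula $\dim\cO_\lambda=n^2-\sum_k(\lambda^t_k)^2$ --- is the standard and correct strategy. However, the codimension step contains a genuine error: the identity $\dim\cO_\lambda=\dim\cO_{\lambda^t}$ is false (for $\lambda=(2,1,1)$ in $\fsl_4$ one has $\dim\cO_\lambda=6$ but $\dim\cO_{\lambda^t}=\dim\cO_{(3,1)}=10$), and more fundamentally transposition \emph{reverses} the dominance/closure order, so it cannot preserve the codimension of one orbit in another's closure. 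Run literally, your transposition argument would assign to every column move the codimension $2$ (twice the difference in \emph{column} numbers), contradicting the very statement being proved --- e.g.\ the degeneration $\cO_{(4,4,4,4)}\prec\cO_{(5,4,4,3)}$ has codimension $6$. The fix is already in your setup: a column move is also a move of a corner box from some row $a$ to a corner position in a lower row $b$, and the direct computation $\sum_k\bigl((\mu^t_k)^2-(\lambda^t_k)^2\bigr)=2\bigl(\lambda^t_{\lambda_b+1}-\lambda^t_{\lambda_a}\bigr)+2=2(b-a)$ is valid verbatim in both cases, so no transposition is needed for the codimension at all.

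A secondary point: in the ``only if'' direction of the cover classification, the criterion you state (``if the gap exceeds $1$, or if $j>i+1$, one can interpose'') is wrong as written. Column-move covers such as $(2,2,2,2,2)\prec(3,2,2,2,1)$ have partial sums differing by exactly $1$ on an arbitrarily long block of consecutive indices, yet admit no intermediate partition; the interposition argument only disposes of covers confined to two adjacent rows. You do gesture at the correct repair --- the transpose anti-automorphism of the dominance order, which is perfectly legitimate at the combinatorial level --- but you should separate cleanly the two uses of transposition: it is fine for classifying covers (where only the order matters), and not fine for the dimension count (where it has no codimension-preserving geometric counterpart). Finally, ``moving a box to the end of a strictly shorter row below it'' gives a cover only when the rows are adjacent and $\lambda_a-\lambda_{a+1}\geq 2$; as stated at the outset of your proposal it is too broad, though your later refinement addresses this.
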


\begin{example}
Take $\fg=\fsl_{16}$ and let $\lambda=(5,4,4,3)$. There are three corner boxes, of which two can be moved down one row or left one column; these corner boxes are indicated in bold in the following pictures.
\[
\begin{split}
\begin{tableau}
\row{\c\c\c\c\c}
\row{\c\c\c\c}
\row{\c\c\c\c}
\row{\c\c\b{1}{1}{{}}}
\end{tableau}
\qquad&\rightsquigarrow\quad
\begin{tableau}
\row{\c\c\c\c\c}
\row{\c\c\c\c}
\row{\c\c\c\c}
\row{\c\c}
\row{\b{1}{1}{{}}}
\end{tableau}
\\
\begin{tableau}
\row{\c\c\c\c\b{1}{1}{{}}}
\row{\c\c\c\c}
\row{\c\c\c\c}
\row{\c\c\c}
\end{tableau}
\qquad&\rightsquigarrow\quad
\begin{tableau}
\row{\c\c\c\c}
\row{\c\c\c\c}
\row{\c\c\c\c}
\row{\c\c\c\b{1}{1}{{}}}
\end{tableau}
\end{split}
\]
So the minimal degenerations of $\cO_{(5,4,4,3)}$ are $\cO_{(5,4,4,2,1)}$ (of codimension $2$) and $\cO_{(4,4,4,4)}$ (of codimension $6$).
\end{example}

\begin{example} \label{ex:sln-min-subreg}
When $\fg=\fsl_n$, the minimal orbit $\cO_{\min}$ is $\cO_{(2,1,1,\cdots,1)}$, and its dimension (which equals the codimension of $\{0\}=\cO_{(1,1,\cdots,1)}$ in its closure) equals $2(n-1)$. The subregular orbit $\cO_{\subreg}$ is $\cO_{(n-1,1)}$, and its codimension in $\cN=\overline{\cO_{(n)}}$ is $2$.
\end{example}

Each nilpotent orbit $\cO$ is a nonsingular variety, since it is a homogeneous space for the group $G$. However, the closure $\overline{\cO}$ has interesting singularities. As was observed by Namikawa, $\overline{\cO}$ is a \emph{holonomic} Poisson variety in the sense of~\cite[Definition 1.3]{kaledin}, and it follows as in~\cite[Lemma 1.4]{kaledin} that the singular locus $\Sing(\overline{\cO})$ is the whole of $\overline{\cO}\setminus\cO$. So the irreducible components of $\Sing(\overline{\cO})$ are the closures $\overline{\cO'}$ where $\cO'\prec\cO$; in other words, the \emph{generic singularities} of $\overline{\cO}$ are those at points of the minimal degenerations $\cO'\prec\cO$.

\section{Slodowy slices}

\label{sec:slodowy}

In studying the singularity of $\overline{\cO}$ at a point $X$, the first step is to discard the directions in which $\overline{\cO}$ is nonsingular, that is, the directions of the orbit $\cO_X=G\cdot X$. There is a particularly nice way to do this that relies on the following result from Lie algebra structure theory:
\begin{theorem}[Jacobson--Morozov, see~{\cite[Section 3.3]{cm}}] \label{thm:jm}
Given any $X\in\cN$, there is an element $Y\in\fg$ such that
\[ 
[[X,Y],X]=2X,\ [[X,Y],Y]=-2Y.
\]
Any such $Y$ belongs to $\cN$, and in fact belongs to the $G$-orbit $\cO_X$. Moreover, if $Y,Y'$ are two such elements, then there is some $g\in G$ such that $g\cdot X=X$ and $g\cdot Y=Y'$.
\end{theorem}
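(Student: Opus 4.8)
The plan is to establish the three assertions separately (we may assume $X\neq0$, the case $X=0$ forcing $Y=0$). For existence I would run the standard two-step Jacobson--Morozov argument; see~\cite[Section~3.3]{cm}. Step one produces an ad-semisimple \emph{characteristic} $H\in\fg$ with $[H,X]=2X$ lying moreover in $[X,\fg]=\operatorname{im}(\operatorname{ad}X)$. The single elementary input is that $X\in\operatorname{im}(\operatorname{ad}X)$: for $Z$ in the centralizer $\fg^X:=\ker(\operatorname{ad}X)$ the operators $\operatorname{ad}X$ and $\operatorname{ad}Z$ commute, so their product is nilpotent and $\tr(\operatorname{ad}X\,\operatorname{ad}Z)=0$; thus $X\perp\fg^X$ for the trace form, and since $\operatorname{ad}X$ is skew for this nondegenerate form, $\operatorname{im}(\operatorname{ad}X)=(\fg^X)^\perp\ni X$. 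From this one extracts some $H$ with $[H,X]=2X$, replaces $H$ by its semisimple part in the Jordan decomposition, and then adjusts it within the coset $H+\fg^X$ so that it also lies in $[X,\fg]$. Given such an $H$, Morozov's lemma delivers $Y$ with $[X,Y]=H$ and $[H,Y]=-2Y$; since $H=[X,Y]$ this gives exactly the relations in the statement. The main obstacle of the proof is this last adjustment --- upgrading the characteristic to one lying in $[X,\fg]$ needs genuine Lie-theoretic work (alternatively, a Hilbert--Mumford cocharacter $\lambda$ with $\lim_{t\to0}\operatorname{Ad}(\lambda(t))X=0$, which exists because $0\in\overline{\cO_X}$, can be used instead). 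Everything beyond this is $\fsl_2$ representation theory.

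Next I would show that any such $Y$ lies in $\cN$, indeed in $\cO_X$. Given $Y$ with the two relations, put $H:=[X,Y]$; then $\operatorname{ad}H$ has eigenvalues $2,0,-2$ on $X,H,Y$, so these are linearly independent, and the relations make $\mathfrak{s}:=\langle X,H,Y\rangle$ a subalgebra isomorphic to $\fsl_2$ via $X\mapsto e$, $H\mapsto h$, $Y\mapsto f$. Viewing $\fg$ as an $\fsl_2$-module through $\mathfrak{s}$ and the adjoint action, it is a direct sum of finite-dimensional irreducibles, on each of which $f$ --- hence $\operatorname{ad}Y$ --- acts nilpotently; so $Y\in\cN$. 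For the sharper statement, $\mathfrak{s}\hookrightarrow\fg\subset\Mat_n$ exponentiates to a homomorphism $SL_2\to G$ (as $SL_2$ is simply connected), and $e$ and $f$ lie in the single nonzero nilpotent orbit of $\fsl_2$, hence are $SL_2$-conjugate; transporting the conjugating element to $G$ gives $Y\in G\cdot X=\cO_X$.

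Finally, for conjugacy, let $Y,Y'$ both satisfy the relations, with characteristics $H:=[X,Y]$ and $H':=[X,Y']$, and grade $\fg=\bigoplus_i\fg_i$ by the $\operatorname{ad}H$-eigenvalues. By the $\fsl_2$-theory of the triple $(X,H,Y)$, the kernel of $\operatorname{ad}X$ meets each irreducible summand of $\fg$ in its highest-weight line, whence $\fg^X\subseteq\bigoplus_{i\geq0}\fg_i$; a short calculation inside each $\fsl_2$-isotypic component sharpens this to $\fg^X\cap[X,\fg]=\fg^X\cap\bigoplus_{i>0}\fg_i=:\mathfrak{u}$, a nilpotent subalgebra of $\fg^X$ whose unipotent group $N:=\exp(\mathfrak{u})$ lies in $G^X:=\{g\in G\mid g\cdot X=X\}$. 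Since $[H'-H,X]=0$ and $H,H'\in[X,\fg]$, the difference $H'-H$ lies in $\mathfrak{u}$; because $\operatorname{ad}H$ acts on $\fg_i$ as the nonzero scalar $i$ for each $i>0$, a routine successive-approximation argument along the grading produces $u\in N$ with $\operatorname{Ad}(u)H=H'$. Now $u\cdot(X,H,Y)=(X,H',u\cdot Y)$ and $(X,H',Y')$ are $\fsl_2$-triples with the same first two entries, and the third entry of such a triple is uniquely determined --- it is the only $F$ with $[X,F]=H'$ and $[H',F]=-2F$, since $\fg^X$ contains no $(-2)$-eigenvector of $\operatorname{ad}H'$ (by the $\fsl_2$-theory of $(X,H',Y')$). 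Hence $u\cdot Y=Y'$, and $g:=u\in G^X$ satisfies $g\cdot X=X$ and $g\cdot Y=Y'$, as required.
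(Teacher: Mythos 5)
Your proposal is correct, and since the paper offers no proof of this theorem at all — it simply quotes it with a pointer to \cite[Section 3.3]{cm} — the right comparison is with that reference, whose standard argument is exactly what you reproduce: the trace-form/Morozov route to existence, $\fsl_2$-representation theory together with integrating $\mathfrak{s}\cong\fsl_2$ to a homomorphism $SL_2\to G$ to get $Y\in\cN$ and $Y\in\cO_X$, and Kostant's conjugacy argument via the graded nilpotent algebra $\fg^X\cap[X,\fg]$ and successive approximation along the $\operatorname{ad}[X,Y]$-grading. The only step you do not actually carry out — producing a characteristic $H\in[X,\fg]$ with $[H,X]=2X$, equivalently $X\in\operatorname{im}((\operatorname{ad}X)^2)$, which is genuinely the crux — you correctly identify and defer to the same citation, so your treatment is at least as complete as the paper's own.
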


\begin{example}
If $\fg=\fsl_2$ and $X=[\begin{smallmatrix}0&1\\0&0\end{smallmatrix}]$, then it is easy to check that $Y=[\begin{smallmatrix}0&0\\1&0\end{smallmatrix}]$ satisfies the conditions in Theorem~\ref{thm:jm}. In fact, this example is the motivation for those conditions: if $X,Y$ satisfy the conditions of Theorem~\ref{thm:jm} and are nonzero, then they generate a subalgebra of $\fg$ that is isomorphic to $\fsl_2$.
\end{example}

Given $X,Y\in\cN$ as in Theorem~\ref{thm:jm}, we define
\[
\cS_X := \{Z\in\fg\,|\,[Z-X,Y]=0\}.
\]
Note that $\cS_X$ is an affine-linear subspace of $\fg$ passing through $X$, called the \emph{Slodowy slice} at $X$. It would be more correct to denote it $\cS_{X,Y}$, but we omit $Y$ from the notation on the grounds that it is `almost' determined by $X$, in the sense of the last statement of Theorem~\ref{thm:jm}; if we were to change $Y$, then $\cS_X$ would only change by the action of some $g\in G$. Nevertheless, we must bear in mind that the group that acts naturally on $\cS_X$ is not the whole stabilizer $G_X$ of $X$, but rather the joint stabilizer $G_{X,Y}=\{g\in G\,|\,g\cdot X=X,g\cdot Y=Y\}$.

\begin{proposition}[Kostant, Slodowy~\cite{slod}] \label{prop:slodowy}
If $X,Y\in\cN$ are as in Theorem~\ref{thm:jm}, then $\cS_X$ is a transverse slice to $\cO_X$ at $X$, in the sense that $\dim\cS_X=\dim\fg-\dim\cO_X$ and the morphism
\[ G\times \cS_X\to\fg:(g,X')\mapsto g\cdot X' \] 
is a submersion.
\end{proposition}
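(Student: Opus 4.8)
The plan is to exploit the $\fsl_2$-triple structure furnished by Theorem~\ref{thm:jm}. Write $H:=[X,Y]$, so that $(X,H,Y)$ spans a subalgebra isomorphic to $\fsl_2$, and decompose $\fg$ into isotypic components for the adjoint action of this $\fsl_2$: that is, into eigenspaces $\fg=\bigoplus_{j\in\Z}\fg(j)$ for $\ad H$, with $\ad X:\fg(j)\to\fg(j+2)$ and $\ad Y:\fg(j)\to\fg(j-2)$. First I would record the basic representation-theoretic facts about $\fsl_2$-modules: on any finite-dimensional $\fsl_2$-module, $\ad X$ is injective on the sum of the strictly negative weight spaces and $\ker(\ad Y)$ is exactly the sum of the highest-weight lines, one in each irreducible summand. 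Applying this to $\fg$ itself gives $\ker(\ad Y)=\bigoplus_{j\geq 0}(\ker \ad Y\cap\fg(j))$, and more importantly $\fg=[\fg,X]\oplus\ker(\ad Y)$, the direct-sum decomposition into the image of $\ad X$ and a complement. Since $\cS_X=X+\ker(\ad Y)$ by definition, this already yields the dimension count: $\dim\cS_X=\dim\ker(\ad Y)=\dim\fg-\dim[\fg,X]=\dim\fg-\dim\cO_X$, using that the tangent space to $\cO_X=G\cdot X$ at $X$ is $[\fg,X]$.

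Next I would address the submersion claim. The differential of the map $\mu:G\times\cS_X\to\fg$, $(g,X')\mapsto g\cdot X'$, at the point $(1,X)$ is the linear map $\fg\oplus\ker(\ad Y)\to\fg$ sending $(A,V)\mapsto [A,X]+V$; its image is $[\fg,X]+\ker(\ad Y)=\fg$ by the decomposition above, so $\mu$ is submersive at $(1,X)$. For a general point $(g,X')$ with $X'\in\cS_X$, one uses $G$-equivariance: left translation by $g$ on the source and the adjoint action of $g$ on the target intertwine the situation at $(g,X')$ with the situation at $(1,X')$, so it suffices to prove submersivity along $\{1\}\times\cS_X$. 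Here the key point is a semicontinuity/openness argument: the locus in $\cS_X$ where $d\mu_{(1,X')}$ fails to be surjective is closed, and it does not contain $X$; to see it is empty I would use the contracting $\C^*$-action on $\cS_X$. Explicitly, there is a one-parameter subgroup $\rho:\C^*\to G$ (coming from exponentiating a suitable element acting as $\tfrac12\ad H$ plus the scalar action) under which $t\cdot(X+V)$ with $V=\sum V_j$, $V_j\in\fg(j)\cap\ker\ad Y$, scales as $t^{-2}X+\sum t^{-2-j}V_j$, and since all the exponents $-2-j$ with $j\geq 0$ are negative, every point of $\cS_X$ flows to $X$ as $t\to\infty$. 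Surjectivity of the differential is preserved along this flow by equivariance, so if it holds at $X$ it holds on all of $\cS_X$.

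The step I expect to be the main obstacle is constructing and bookkeeping the contracting $\C^*$-action correctly: one must check that the relevant cocharacter genuinely normalizes $\cS_X$ (this uses that $\ker\ad Y$ is $\ad H$-stable, hence stable under $\rho(\C^*)$, while the affine shift by $X$ lands in weight $-2$) and that combining the $\fsl_2$-grading weight with the overall dilation of $\fg$ produces strictly negative weights on the whole slice, including the fixed direction $X$ itself. Everything else is linear algebra in finite-dimensional $\fsl_2$-representations plus the standard identification $T_X\cO_X=[\fg,X]$. An alternative to the $\C^*$-argument, if one prefers, is to invoke upper semicontinuity of fibre dimension together with a direct dimension count for the generic fibre of $\mu$, but the contracting action is cleaner and is the argument in Slodowy~\cite{slod}.
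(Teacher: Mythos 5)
The paper does not actually prove this proposition: it is stated as a classical result with a citation to Slodowy's book, so there is no in-paper argument to compare against. Your proposal reconstructs the standard Kostant--Slodowy proof, and the strategy is correct: the decomposition $\fg=[\fg,X]\oplus\ker(\ad Y)$ coming from $\fsl_2$-theory gives both the dimension count (via $T_X\cO_X=[\fg,X]$) and surjectivity of $d\mu$ at $(1,X)$; $G$-equivariance reduces the submersion claim to the locus $\{1\}\times\cS_X$; and the contracting $\C^\times$-action, together with closedness and invariance of the non-submersive locus, propagates surjectivity from $X$ to all of $\cS_X$. This is exactly the argument in Slodowy~\cite{slod}.

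Two bookkeeping slips are worth correcting, and you rightly flagged this step as the delicate one. First, with the convention $[H,X]=2X$, $[H,Y]=-2Y$, the operator $\ad Y$ is the \emph{lowering} operator, so $\ker(\ad Y)$ is spanned by lowest-weight vectors and lies in the $\ad H$-weights $j\leq 0$, not $j\geq 0$ (this does not affect the direct-sum decomposition or the dimension count). Second, the renormalized action must \emph{fix} $X$, not scale it: taking $\tilde\rho\colon\C^\times\to G$ the cocharacter acting by $t^j$ on $\fg(j)$ and setting $\sigma(t)=t^{-2}\,\tilde\rho(t)$, one gets $\sigma(t)\cdot X=X$ and $\sigma(t)\cdot V_j=t^{j-2}V_j$ with $j-2\leq -2<0$, so $\sigma(t)(X+V)\to X$ as $t\to\infty$. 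As written, your formula sends $X$ to $t^{-2}X$, so every point would flow to $0\notin\cS_X$ and the limiting argument would not close up. (Your exponents on the $V_j$ are consistent with the correct ones after the sign convention on $j$ is fixed, so the two errors happen to cancel there.) One should also note, as you implicitly do, that the rank of $d\mu$ is constant along the flow because $\sigma(t)$ lifts to the automorphism $(g,X')\mapsto(\tilde\rho(t)g\tilde\rho(t)^{-1},\sigma(t)X')$ of $G\times\cS_X$ intertwining $\mu$ with the linear automorphism $\sigma(t)$ of $\fg$. With these repairs the proof is complete.
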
 
\noindent
This result means that for any orbit closure $\overline{\cO}$ containing $X$, the singularity of $\overline{\cO}$ at $X$ is \emph{smoothly equivalent} to that of $\cS_X\cap\overline{\cO}$ at $X$. Henceforth we will usually consider, instead of $\overline{\cO}$ itself, such a subvariety $\cS_X\cap\overline{\cO}$. Note that if $X=0$, then necessarily $Y=0$ also, so $\cS_0=\fg$ and $\cS_0\cap\overline{\cO}=\overline{\cO}$.

The dimension of $\cS_X\cap\overline{\cO}$ equals the codimension of $\cO_X$ in $\overline{\cO}$.
It is a fact that $\cS_X$ does not meet any nilpotent orbits except those whose closure contains $X$, so $\cS_X\cap\overline{\cO}$ only meets orbits $\cO'$ such that $\cO_X\leq \cO'\leq \cO$. Moreover, $\cS_X\cap\cO_X=\{X\}$. In particular, if $\cO_X\prec\cO$, then $\cS_X\cap\overline{\cO}$ is singular only at $X$; that is, we have an \emph{isolated singularity} in that case.

\begin{example}
Take $\fg=\fsl_3$. A pair $X,Y$ lying in the regular nilpotent orbit $\cO_{\reg}$ is
\[ X=\begin{bmatrix}0&1&0\\0&0&1\\0&0&0\end{bmatrix},\ Y=\begin{bmatrix}0&0&0\\2&0&0\\0&2&0\end{bmatrix}. \]
In this case a simple calculation gives
\[
\cS_X=\left\{\left.\begin{bmatrix}0&1&0\\a&0&1\\b&a&0\end{bmatrix}\right| a,b\in\C\right\}.
\]
Computing the characteristic polynomial of the given matrix in $\cS_X$, one finds 
\[
\chi_1\left(\begin{bmatrix}0&1&0\\a&0&1\\b&a&0\end{bmatrix}\right)=-2a,\qquad 
\chi_2\left(\begin{bmatrix}0&1&0\\a&0&1\\b&a&0\end{bmatrix}\right)=b.
\]
So $\cS_X$ meets each fibre of the adjoint quotient map in exactly one point (it turns out that this holds for general $\fg$ when $X\in\cO_{\reg}$). In particular, $\cS_X\cap\cN=\{X\}$, in accordance with the above general rules.
\end{example}

\begin{example} \label{ex:sl3-subreg}
Now keep $\fg=\fsl_3$ but consider a pair in the subregular nilpotent orbit $\cO_{(2,1)}$:
\[
X=\begin{bmatrix}0&1&0\\0&0&0\\0&0&0\end{bmatrix},\ Y=\begin{bmatrix}0&0&0\\1&0&0\\0&0&0\end{bmatrix}. \]
A simple calculation gives 
\[
\cS_X=\left\{\left.\begin{bmatrix}a&1&0\\b&a&c\\d&0&-2a\end{bmatrix}\right| a,b,c,d\in\C\right\}.
\]
Computing the characteristic polynomial, one finds:
\[
\begin{split}
\chi_1\left(\begin{bmatrix}a&1&0\\b&a&c\\d&0&-2a\end{bmatrix}\right)&=-3a^2-b,\\
\chi_2\left(\begin{bmatrix}a&1&0\\b&a&c\\d&0&-2a\end{bmatrix}\right)&=2a(b-a^2)+cd.
\end{split}
\]
In particular, 
\[ \cS_X\cap\cN=\left\{\left.\begin{bmatrix}a&1&0\\-3a^2&a&c\\d&0&-2a\end{bmatrix}\;\right|\; a,c,d\in\C,\; 8a^3=cd\right\}, \]
a singular surface with an isolated singularity at $X$.
\end{example}

\section{Why do representation theorists care?}

Perhaps surprisingly, the singularities of nilpotent orbit closures have been shown to encode a lot of representation-theoretic information. In particular, the case of $\fsl_n$ relates to the representations of the symmetric group $S_n$. To give an example of a concrete statement along these lines, we need to introduce some classic representation theory constructions. 

For any partition $\lambda$ of $n$, define the polynomial
\[
\begin{split}
\pi_\lambda(x_1,\cdots,x_n)&:=\Delta(x_1,\cdots,x_{\lambda_1})\Delta(x_{\lambda_1+1},\cdots,x_{\lambda_1+\lambda_2})\cdots,\text{ where}\\
\Delta(y_1,\cdots,y_m)&:=\prod_{1\leq i<j\leq m} y_i-y_j. 
\end{split}
\]
Define the \emph{Specht module} $S_\lambda\subset\Z[x_1,\cdots,x_n]$ by 
\[
S_\lambda=\text{$\Z$-span}\{\pi_\lambda(x_{\sigma(1)},\cdots,x_{\sigma(n)})\,|\,\sigma\in S_n\}.
\] 
Experts will note that this Specht module is traditionally labelled not by $\lambda$ but by the transpose partition $\lambda^{\mathbf{t}}$ (so the conventions in some subsequent results are tranposed from their familiar form).
However it is labelled, $S_\lambda$ is clearly stable under the action of the symmetric group $S_n$ (acting by permuting the variables $x_1,\cdots,x_n$); that is, it is a $\Z S_n$-module. Let $B_\lambda:S_\lambda\times S_\lambda\to\Z$ be the restriction of the $\Z$-bilinear form on $\Z[x_1,\cdots,x_n]$ for which the monomials are orthonormal. 

\begin{example}
Take $n=3$. By convention, $\Delta(y_1,\cdots,y_m)=1$ when $m<2$ (since it is then an empty product), so
\[
\begin{split}
\pi_{(1,1,1)}(x_1,x_2,x_3)&=1,\\
\pi_{(2,1)}(x_1,x_2,x_3)&=x_1-x_2,\\
\pi_{(3)}(x_1,x_2,x_3)&=(x_1-x_2)(x_1-x_3)(x_2-x_3).
\end{split}
\]
Thus, $S_{(1,1,1)}$ is a free rank-one $\Z$-module spanned by $1$, on which $S_3$ acts trivially, and $B_{(1,1,1)}(1,1)=1$. By contrast, 
\[ S_{(2,1)}=\text{$\Z$-span}\{x_1-x_2,x_1-x_3,x_2-x_3\} \] 
is a free rank-two $\Z$-module with basis $\{x_1-x_2,x_2-x_3\}$, and relative to this basis the matrix of the form $B_{(2,1)}$ is $[\begin{smallmatrix}2&-1\\-1&2\end{smallmatrix}]$. Finally, $S_{(3)}$ is a free rank-one $\Z$-module spanned by $\pi:=(x_1-x_2)(x_1-x_3)(x_2-x_3)$, on which $S_3$ acts via its sign character, and $B_{(3)}(\pi,\pi)=6$. 
\end{example}

\begin{theorem}[James~\cite{james}] \label{thm:james}
For any field $F$, let $S_\lambda^F=S_\lambda\otimes_\Z F$ and let $B_\lambda^F$ be the $F$-bilinear form on $S_\lambda^F$ induced by $B_\lambda$.
\begin{enumerate}
\item When $F$ has characteristic $0$, a complete set of inequivalent irreducible representations of $S_n$ over $F$ is given by
\[ \{S_\lambda^F\,|\,\lambda\text{ is a partition of }n\}. \]
\item When $F$ has characteristic $p$, a complete set of inequivalent irreducible representations of $S_n$ over $F$ is given by
\[ \{D_\mu^F\,|\,\mu\text{ is a $p$-restricted partition of }n\}, \]
where $D_\mu^F=S_\mu^F/\Rad(B_\mu^F)$, and $\mu$ is said to be \emph{$p$-restricted} if $\mu_i-\mu_{i+1}<p$ for all $i$.
\end{enumerate}
\end{theorem}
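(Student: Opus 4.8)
The plan is to follow James's original argument \cite{james}, whose engine is the so-called \emph{submodule theorem}. Fix a field $F$ and abbreviate $M\otimes_\Z F$ to $M^F$. First I would enlarge the picture: let $M_\lambda\subseteq\Z[x_1,\dots,x_n]$ be the $\Z S_n$-submodule spanned by the $S_n$-orbit of the monomial $m_\lambda$ for which $\pi_\lambda=\sum_{w\in W}\mathrm{sgn}(w)\,(w\cdot m_\lambda)$, where $W=S_{\lambda_1}\times S_{\lambda_2}\times\cdots$ permutes the variables within the blocks $\{1,\dots,\lambda_1\},\{\lambda_1+1,\dots,\lambda_1+\lambda_2\},\dots$. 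Then $M_\lambda$ is a permutation module (the orbit of $m_\lambda$ is a set of cosets of a Young subgroup), it contains $S_\lambda$, the monomial form restricts to a non-degenerate $S_n$-invariant $\Z$-bilinear form on $M_\lambda$ extending $B_\lambda$, and $S_\lambda$ is \emph{cyclic}, generated by $\pi_\lambda$. The submodule theorem asserts: for every $FS_n$-submodule $U\subseteq M_\lambda^F$, either $S_\lambda^F\subseteq U$ or $U\subseteq(S_\lambda^F)^\perp$. Its proof is the combinatorial core: attaching a ``column antisymmetriser'' $a_t\in FS_n$ to a Young tableau $t$, one checks by a Garnir-relation computation that $a_t$ maps all of $M_\lambda^F$ onto the line through $\pi_\lambda$, with $a_tu$ a nonzero multiple of $\pi_\lambda$ whenever $\langle u,\pi_\lambda\rangle\neq0$. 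So if $U\not\subseteq(S_\lambda^F)^\perp$ then $\pi_\lambda\in U$, and cyclicity gives $S_\lambda^F\subseteq U$.

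From the submodule theorem I would extract the module-theoretic consequences uniformly. If $B_\lambda^F=0$ then $D_\lambda^F=0$; otherwise $\Rad(B_\lambda^F)$ is a proper $FS_n$-submodule of $S_\lambda^F$, hence lies in $(S_\lambda^F)^\perp$, whence $\Rad(B_\lambda^F)=S_\lambda^F\cap(S_\lambda^F)^\perp$; applying the theorem to any submodule $W\subseteq S_\lambda^F$ not contained in $\Rad(B_\lambda^F)$ forces $W=S_\lambda^F$, so $D_\lambda^F$ is irreducible. For part~(1), where $\cha F=0$: the monomial form on $\Z[x_1,\dots,x_n]\otimes\Q$ is positive definite, so its restriction to $S_\lambda\otimes\Q$ is non-degenerate; hence $B_\lambda^F$ is non-degenerate for every $\lambda$ and $D_\lambda^F=S_\lambda^F$ is irreducible and nonzero. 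To see the $S_\lambda^F$ are pairwise inequivalent, use the dominance order $\trianglelefteq$ together with the standard computation of $\Hom_{FS_n}(S_\mu^F,M_\lambda^F)$ (Young's rule): a nonzero homomorphism forces $\lambda\trianglerighteq\mu$, and $S_\lambda^F$ occurs in $M_\lambda^F$ (in characteristic $0$, as a summand) with multiplicity one; so $S_\mu^F\cong S_\lambda^F$ gives $\lambda\trianglerighteq\mu\trianglerighteq\lambda$, i.e.\ $\lambda=\mu$. Completeness is then the count $\#\{\text{partitions of }n\}=\#\{\text{conjugacy classes of }S_n\}=\#\{\text{irreducible }FS_n\text{-modules}\}$, the last equality holding because $\cha F=0$ and $\Q$, hence $F$, is a splitting field for $S_n$.

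For part~(2), where $\cha F=p$, three points remain. \emph{Non-vanishing:} $D_\mu^F\neq0$ iff $B_\mu^F\neq0$, and one computes the Gram matrix of $B_\mu$ on a basis of standard polytabloids to be nonzero modulo $p$ precisely when $\mu$ is $p$-restricted; concretely, for a distinguished tableau the relevant inner product is a product of factorials of the column lengths of $\mu$, which is a $p$-adic unit exactly under the condition $\mu_i-\mu_{i+1}<p$ for all $i$. \emph{Distinctness:} as in part~(1), $M_\mu^F$ has a Specht filtration with subquotients $S_\nu^F$ ($\mu\trianglerighteq\nu$), and the decomposition matrix is unitriangular for dominance ($[S_\nu^F:D_\kappa^F]\neq0\Rightarrow\nu\trianglerighteq\kappa$, with $[S_\nu^F:D_\nu^F]=1$ when $\nu$ is $p$-restricted); so $D_\mu^F$ occurs in $M_\mu^F$ with multiplicity one and every composition factor $D_\kappa^F$ of $M_\mu^F$ has $\mu\trianglerighteq\kappa$, forcing $\mu\mapsto D_\mu^F$ to be injective on $p$-restricted partitions. \emph{Completeness:} by Brauer's theory the number of irreducible $FS_n$-modules equals the number of $p$-regular conjugacy classes of $S_n$ (here one uses that $\mathbb{F}_p$ is already a splitting field for $S_n$), i.e.\ the number of partitions of $n$ with no part divisible by $p$; a classical generating-function bijection (Glaisher's) identifies this with the number of $p$-restricted partitions of $n$. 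Combining the three points gives part~(2).

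The hard part is the submodule theorem --- getting the Garnir-relation bookkeeping right so that the column antisymmetriser genuinely collapses $M_\lambda^F$ onto the line spanned by $\pi_\lambda$ --- together with the modular evaluation of $B_\mu$ in part~(2), which is what actually produces the $p$-restricted condition. By comparison, the dominance-order constituent arguments and the two counting identities are routine once the standard tableau combinatorics (Young's rule, Specht filtrations of permutation modules, unitriangularity of the decomposition matrix) is in hand.
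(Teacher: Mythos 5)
The paper does not prove this theorem at all: it is quoted as a classical result with a pointer to James~\cite{james}, so there is no in-paper argument to compare against. Your sketch is, in substance, James's original proof (permutation module $M_\lambda$ containing the cyclic Specht module, the submodule theorem via the antisymmetriser attached to a tableau, irreducibility of $S_\lambda^F/\Rad(B_\lambda^F)$, dominance-order arguments for distinctness, and counting against conjugacy classes resp.\ $p$-regular classes for completeness), and as an outline it is essentially correct and correctly adapted to the paper's transposed labelling convention. Two caveats. First, a minor one: the computation showing that the antisymmetriser sends all of $M_\lambda^F$ into the line through $\pi_\lambda$ is a direct tabloid calculation plus self-adjointness of the antisymmetriser with respect to the form; Garnir relations are not what is used there (they enter in the standard basis theorem).

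Second, and more substantively, the non-vanishing step of part~(2) as you state it is wrong. The quantity that must be a unit mod $p$ is not ``a product of factorials of the column lengths of $\mu$'', i.e.\ $\prod_j (\mu^{\mathbf{t}})_j!$ --- that is a unit precisely when every column of $\mu$ has length less than $p$, which is not the $p$-restricted condition. The correct quantity in James's computation (transposed to the present conventions) is $\prod_i(\mu_i-\mu_{i+1})!$, the product of factorials of the \emph{multiplicities} of the column lengths, and this is prime to $p$ exactly when $\mu_i-\mu_{i+1}<p$ for all $i$. The paper's own $n=3$ example exposes the discrepancy: for $\mu=(2,1)$ and $p=2$ the Gram matrix of $B_{(2,1)}$ has determinant $3$, so $B_{(2,1)}^F\neq0$ and $D_{(2,1)}^F\neq0$ in characteristic $2$, even though the product of factorials of the column lengths is $2!\cdot 1!=2\equiv 0$. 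Since you yourself identify this evaluation as ``what actually produces the $p$-restricted condition'', the step needs the corrected invariant; with that replacement the argument goes through.
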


\begin{example}
Continue the $n=3$ example. If $F$ has characteristic $0$, then the representations $S_{(1,1,1)}^F$, $S_{(2,1)}^F$, $S_{(3)}^F$ are all irreducible and inequivalent; they are referred to respectively as the trivial, reflection, and sign representations of $S_3$ over $F$. If $F$ has characteristic $p>3$, the situation is the same: all the partitions $\mu$ of $3$ are $p$-restricted, and for all such $\mu$ we have $D_\mu^F=S_\mu^F$ (that is, the form $B_\mu^F$ is nondegenerate; in other words, the determinant of the matrix of the integral form $B_\mu$ is not divisible by $p$). If $F$ has characteristic $2$, then $S_{(1,1,1)}^F$, $S_{(2,1)}^F$, $S_{(3)}^F$ are again all irreducible, but $S_{(3)}^F$ is equivalent to $S_{(1,1,1)}^F$ (the sign representation is trivial since $-1=1$ in $F$). This accords with Theorem~\ref{thm:james}, since $D_\mu^F=S_\mu^F$ for $\mu\in\{(1,1,1),(2,1)\}$, and the partition $(3)$ is not $2$-restricted. If $F$ has characteristic $3$, then $S_{(2,1)}^F$ is reducible: the one-dimensional subspace spanned by $x_1+x_2+x_3$ is invariant, and equals the radical of $B_{(2,1)}^F$ (note that the matrix of the integral form $B_{(2,1)}$ has determinant $3$). So $D_{(2,1)}^F$ is the quotient of $S_{(2,1)}^F$ by this subspace, and is equivalent to the sign representation. Again $D_{(1,1,1)}^F=S_{(1,1,1)}^F$, and the partition $(3)$ is not $3$-restricted.        
\end{example}

The representations $S_{\lambda}^F$ are fairly well understood, in the sense that there are combinatorial formulas for their dimensions, explicit bases, and many other results. The irreducible representations $D_\mu^F$, defined when $F$ has characteristic $p$ and $\mu$ is $p$-restricted, have proved harder to handle.
A major unsolved problem in representation theory is to compute the \emph{decomposition numbers} $d_{\lambda\mu}^p:=[S_\lambda^F:D_\mu^F]$, which count the occurrences of each irreducible $D_\mu^F$ in a composition series for $S_\lambda^F$. If we knew these numbers, we would be able to translate our knowledge of $S_\lambda^F$ into knowledge of $D_\mu^F$. There is actually a more general definition of $d_{\lambda\mu}^p$ which makes sense when $\mu$ is not $p$-restricted, involving representations of the \emph{Schur algebra} rather than the symmetric group (see~\cite{mathas}). 

One of the earliest results proved about these decomposition numbers was that $d_{\lambda\mu}^p=0$ unless $\mu\trianglelefteq\lambda$, in the notation introduced after Theorem~\ref{thm:gerstenhaber}; in other words, $d_{\lambda\mu}^p=0$ unless $\cO_\mu\subseteq\overline{\cO_\lambda}$. In retrospect, this can be seen to be a hint that there should be a connection with the closures of nilpotent orbits for $\fsl_n$, and such a connection has now been established by Juteau~\cite{juteau,jmw} (in the cases relating to $S_n$) and Mautner~\cite{mautner} (in the setting of the Schur algebra). Their results imply:

\begin{theorem}[Juteau, Mautner] \label{thm:juteau-mautner}
Let $\lambda$ and $\mu$ be partitions of $n$ with $\mu\trianglelefteq\lambda$.
The decomposition number $d_{\lambda\mu}^p$ depends only on $p$ and the singularity of $\cS_X\cap\overline{\cO_\lambda}$ at $X$ for $X\in\cO_\mu$.
\end{theorem}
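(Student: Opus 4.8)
The plan is to reduce the statement to two facts: that the decomposition number $d_{\lambda\mu}^p$ is, by the results of Juteau and Mautner, a multiplicity of modular intersection cohomology sheaves on $\cN$; and that such a multiplicity is a local invariant near the relevant orbit stratum, which a transverse slice converts into an invariant of the singularity at a point. The first step is to recall the geometric incarnation of $d_{\lambda\mu}^p$. Juteau's modular Springer correspondence for $\fsl_n$, together with the $GL_n$-action on $\cN$, identifies the decomposition numbers of $S_n$ with decomposition numbers for perverse sheaves on $\cN$, and Mautner does the analogous thing for the Schur algebra. Concretely, writing $\mathbf{IC}(\overline{\cO_\nu};R)$ for the intersection cohomology complex of $\overline{\cO_\nu}$ with coefficients in a ring $R$, and taking the $p$-modular system $(\Q_p,\Z_p,\mathbb{F}_p)$, one has
\[
d_{\lambda\mu}^p \;=\; \bigl[\,\overline{\mathbf{IC}(\overline{\cO_\lambda};\Z_p)} \;:\; \mathbf{IC}(\overline{\cO_\mu};\mathbb{F}_p)\,\bigr],
\]
the multiplicity with which the modular simple perverse sheaf attached to $\cO_\mu$ occurs in the mod-$p$ reduction of the $\Z_p$-intersection cohomology complex attached to $\cO_\lambda$, as an equality of classes in the Grothendieck group of constructible complexes on $\cN$. (With the conventions used for Specht modules above one should replace $\lambda$ and $\mu$ by their transposes here; this does not affect the argument.) I would simply cite \cite{juteau,jmw,mautner} for this identification, which is the genuinely deep input.

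The second step is to observe that the multiplicity on the right is a local invariant near the stratum $\cO_\mu$. Deleting from $\overline{\cO_\lambda}$ the union of the strata strictly below $\cO_\mu$ — a closed, $G$-stable subset — leaves a $G$-stable open neighbourhood $U$ of $\cO_\mu$ in which $\cO_\mu$ is closed; restriction to $U$ is exact and annihilates exactly the simple perverse sheaves supported on the deleted strata, so the multiplicity above is unchanged if $\overline{\cO_\lambda}$ is replaced by $\overline{\cO_\lambda}\cap U$. Since $\cO_\mu$ is then a closed stratum, that multiplicity is computed from the stalk and costalk cohomology at a single point $X\in\cO_\mu$, comparing $\Q_p$- and $\mathbb{F}_p$-coefficients and subtracting, by descending induction on the closure order, the already-determined contributions of the strata above $\cO_\mu$. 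Thus $d_{\lambda\mu}^p$ is determined by $p$ together with the local intersection cohomology, in characteristics $0$ and $p$, of the stratified variety $\overline{\cO_\lambda}$ in a neighbourhood of $X$.

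The third step brings in the Slodowy slice via Proposition~\ref{prop:slodowy}. Because $G\times\cS_X\to\fg$ is a submersion onto a neighbourhood of $X$, pulling intersection cohomology back along it shows that, up to a shift and an outer tensor factor, $\mathbf{IC}(\overline{\cO_\lambda};R)$ near $X$ agrees with $\mathbf{IC}(\cS_X\cap\overline{\cO_\lambda};R)$ near $X$ for every $R$; hence all the local data of the previous step for $\overline{\cO_\lambda}$ at $X$ coincide with those for $\cS_X\cap\overline{\cO_\lambda}$ at $X$. Since the stalk and costalk cohomology of an intersection cohomology complex at a point depend only on the coefficient field and on the analytic (equivalently \'etale-local) germ of the variety at that point — intersection cohomology being intrinsic to the variety, independent of any chosen stratification — and since the germ $(\cS_X\cap\overline{\cO_\lambda},X)$ is, by the last part of Theorem~\ref{thm:jm} and the remark following it, independent up to isomorphism of the choices of $X\in\cO_\mu$ and of $Y$, it follows that $d_{\lambda\mu}^p$ depends only on $p$ and the singularity of $\cS_X\cap\overline{\cO_\lambda}$ at $X$.

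The main obstacle is entirely in the first step. The passages through transverse slices and stalks in the second and third steps are formal consequences of the theory of perverse sheaves, but the identification of $d_{\lambda\mu}^p$ with a modular intersection cohomology multiplicity is a substantial theorem, resting on modular Springer theory, modular geometric Satake, and the controlled failure of the decomposition theorem in positive characteristic; it is precisely the content of the cited work of Juteau and of Mautner, which this argument takes as given.
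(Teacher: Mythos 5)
Your proposal is correct and follows the same route the paper intends: the theorem is stated there as a direct consequence of the cited results of Juteau and Mautner, with the remark immediately afterwards that the relevant invariants are certain local intersection cohomology groups with coefficients in $\mathbb{F}_p$, which are invariants of the smooth equivalence class of the singularity at $X$ --- exactly the content of your second and third steps. The paper gives no further argument, so your write-up merely supplies the details it leaves implicit (the reduction to a neighbourhood of $\cO_\mu$ and the pullback along the submersion $G\times\cS_X\to\fg$ of Proposition~\ref{prop:slodowy}), with the genuinely deep input correctly isolated in your first step.
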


More precisely, what one needs to know about $\cS_X\cap\overline{\cO_\lambda}$ are certain local intersection cohomology groups with coefficients in the finite field with $p$ elements; these are invariants of the smooth equivalence class of the singularity at $X$. It may well be that calculating these local intersection cohomology groups in full is no easier than the algebraic problem of calculating the decomposition numbers $d_{\lambda\mu}^p$, but as we will see in Section~\ref{sec:maffei}, the geometric approach allows enlightening proofs of some qualitative results.   

\section{Kleinian singularities}
\label{sec:min-degen}

In what appears at first to be a digression from the study of nilpotent orbits, we recall the definition of a famous class of isolated surface singularities. Let $\Gamma$ be a nontrivial finite subgroup of $SL_2(\C)$. Up to conjugacy in $SL_2(\C)$, there is a quite restricted range of possibilities for such $\Gamma$ (see~\cite{mckay,slod} for more details):
\begin{itemize}
\item[$\A_\ell$:] cyclic of order $\ell+1$ for $\ell\geq 1$;
\item[$\D_\ell$:] binary dihedral of order $4(\ell-2)$ for $\ell\geq 4$;
\item[$\E_\ell$:] binary tetrahedral (order $24$), binary octahedral (order $48$), binary icosahedral (order $120$) for $\ell=6,7,8$ respectively.
\end{itemize}
The various types of groups $\Gamma$ are labelled here by some of the symbols $\mathrm{X}_\ell$ used in the classification of simple Lie algebras; the reason for this will be seen shortly.

Since $\Gamma$ is a subgroup of $SL_2(\C)$, it comes with an action on the vector space $\C^2$. We now want to consider the quotient $\C^2/\Gamma$, which by definition is the affine variety whose algebra of functions is the invariant ring $\C[\C^2]^\Gamma$. 
\begin{theorem}[Klein]
In each case the invariant ring $\C[\C^2]^\Gamma$ is generated by three homogeneous elements $x,y,z$ satisfying a single relation, given in the following table.
\[
\begin{array}{|l|l|}
\hline
\A_\ell&x^{\ell+1}+yz=0\\
\D_\ell&x^{\ell-1}+xy^2+z^2=0\\
\E_6&x^4+y^3+z^2=0\\
\E_7&x^3y+y^3+z^2=0\\
\E_8&x^5+y^3+z^2=0\\
\hline
\end{array}
\]
\end{theorem}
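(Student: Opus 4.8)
The plan is to treat each of the three families of finite subgroups $\Gamma\subset SL_2(\C)$ in turn, in each case producing explicit invariant polynomials and then showing they generate $\C[\C^2]^\Gamma=\C[u,v]^\Gamma$ with exactly one relation. First I would recall the structure of $\Gamma$ in each case from the classification: for type $\A_\ell$, $\Gamma$ is generated by the diagonal matrix $\mathrm{diag}(\zeta,\zeta^{-1})$ with $\zeta$ a primitive $(\ell+1)$th root of unity; for type $\D_\ell$, $\Gamma$ is generated by such a diagonal element of order $2(\ell-2)$ together with the element $\bigl[\begin{smallmatrix}0&1\\-1&0\end{smallmatrix}\bigr]$; for type $\E$, one uses the binary tetrahedral, octahedral, and icosahedral groups, which can be described via their action on $\Pp^1=\C^2\setminus\{0\}/\C^\times$ as the symmetry groups of the regular tetrahedron, octahedron, and icosahedron.

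For type $\A_\ell$ the computation is direct: the monomial $u^av^b$ is $\Gamma$-invariant iff $(\ell+1)\mid(a-b)$, so the invariant ring is spanned by $uv$ and $u^{\ell+1}$ and $v^{\ell+1}$; setting $x=-uv$, $y=u^{\ell+1}$, $z=v^{\ell+1}$ gives the relation $x^{\ell+1}+yz=0$, and a dimension count (or a Hilbert-series argument) shows there are no further relations. For types $\D_\ell$ and $\E$, the efficient approach is the one going back to Klein: a homogeneous polynomial $F(u,v)$ of degree $d$ defines, up to scalar, a $\Gamma$-invariant element of $\C[\C^2]$ precisely when the corresponding divisor of $d$ points on $\Pp^1$ is $\Gamma$-stable and $F$ transforms by a character of $\Gamma$ that is trivial on the relevant subgroup (in the binary groups one must be careful about the central $\pm I$, which forces even degrees, or odd degrees combined with the sign character). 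One locates the three fundamental invariants as the (products of linear forms vanishing at the) three special $\Gamma$-orbits on $\Pp^1$ — for $\E_6$ these are the orbits of sizes $6$, $8$, $12$ coming from the vertices, face-centres, and edge-midpoints of the octahedron (equivalently the $6$, $8$, $12$-orbits of the binary octahedral group), and similarly for the others — and checks by direct (if tedious) computation that they satisfy the stated relation.

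The main obstacle is the verification, for the three exceptional types, that the exhibited invariants $x,y,z$ actually \emph{generate} the whole invariant ring and that the single listed equation generates \emph{all} relations among them. The cleanest way to dispatch both points simultaneously is a Hilbert-series argument: by Molien's formula one computes the Hilbert series of $\C[u,v]^\Gamma$ as $\frac{1}{|\Gamma|}\sum_{g\in\Gamma}\frac{1}{\det(1-t\cdot g)}$, and one checks that this equals $\frac{1-t^{2\deg x+?}}{(1-t^{\deg x})(1-t^{\deg y})(1-t^{\deg z})}$ — more precisely $\frac{1-t^{N}}{(1-t^{a})(1-t^{b})(1-t^{c})}$ where $a,b,c$ are the degrees of $x,y,z$ and $N$ is the degree of the relation. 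Matching this rational function forces the subalgebra $\C[x,y,z]/(\text{relation})$ to have the same Hilbert series as $\C[u,v]^\Gamma$; since the former surjects onto the latter, the surjection is an isomorphism. I would present the $\A_\ell$ case in full as a model, state the Hilbert-series / Molien mechanism, tabulate the orbit degrees for $\D_\ell$ and $\E_\ell$, and then declare the remaining relation-checks to be a finite explicit computation, citing \cite{slod} or \cite{mckay} for the details rather than grinding through the binary icosahedral invariants by hand.
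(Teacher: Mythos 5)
The paper itself offers no proof of this statement: it is quoted as Klein's classical theorem in an expository survey, with the reader directed to sources such as \cite{mckay} and \cite{slod}. So your proposal can only be judged on its own terms. Your overall strategy -- explicit fundamental invariants built from the special orbits on $\Pp^1$, plus a Molien/Hilbert-series comparison -- is indeed the standard route, but as written the crucial step is circular. You say that once the Molien series is seen to equal $(1-t^{N})/\bigl((1-t^{a})(1-t^{b})(1-t^{c})\bigr)$, ``since the former surjects onto the latter, the surjection is an isomorphism.'' There is no a priori surjection onto $\C[\C^2]^\Gamma$: the ring $\C[X,Y,Z]/(R)$ surjects only onto the subalgebra $\C[x,y,z]\subseteq\C[\C^2]^\Gamma$, and the statement that this subalgebra is everything is exactly the generation claim you are trying to prove. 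Equality of the two rational functions gives $H_{\C[X,Y,Z]/(R)}=H_{\C[\C^2]^\Gamma}$ and the inequalities $H_{\C[x,y,z]}\leq H_{\C[X,Y,Z]/(R)}$ and $H_{\C[x,y,z]}\leq H_{\C[\C^2]^\Gamma}$, which do not force equality. The standard repair: check that $R$ is irreducible, so $(R)$ is a height-one prime; check that $x,y,z$ have no common zero in $\C^2$ other than the origin, so $\C[u,v]$ is a finite module over $\C[x,y,z]$ and hence $\C[x,y,z]$ has Krull dimension $2$; therefore the kernel of $\C[X,Y,Z]\to\C[u,v]$ is a height-one prime containing $(R)$, hence equals $(R)$. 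Only then does $H_{\C[x,y,z]}=H_{\C[\C^2]^\Gamma}$ follow, and the inclusion of graded algebras with equal Hilbert series is an equality. (Alternatively one can run Klein's original argument, showing every invariant's zero divisor on $\Pp^1$ is a union of orbits and inducting on degree.)

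Two smaller points. In type $\A_\ell$ the normalization $x=-uv$ only produces $x^{\ell+1}+yz=0$ when $\ell$ is even; take instead $y=-u^{\ell+1}$, $z=v^{\ell+1}$, $x=uv$. And your identification of the generators with ``the three special $\Gamma$-orbits'' is garbled for the exceptional types: for $\E_6$ the group $\Gamma$ is binary \emph{tetrahedral}, whose special orbits on $\Pp^1$ have sizes $4,4,6$, not $6,8,12$; the generator degrees $6,8,12$ are correct but the degree-$8$ and degree-$12$ invariants are built from semi-invariant ground forms (this is the care about characters you allude to), and for $\E_7$ the generator degrees are $8,12,18$, not the orbit sizes $6,8,12$. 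The orbit-form prescription is literally correct only for $\E_8$ (degrees $12,20,30$). These are fixable details, but as stated the tabulation you propose to feed into the Hilbert-series check would come out wrong for $\E_7$.
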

\noindent
So $\C^2/\Gamma$ can be identified with the hypersurface in $\C^3$ defined by the equation given in the above table; it is a normal surface with an isolated singularity at $0$, known as a \emph{Kleinian singularity}.

The \emph{semiuniversal deformation} of a hypersurface~$P(x_1,\cdots,x_d)=0$ in $\C^d$ with an isolated singularity at $0$ is a family of hypersurfaces in $\C^d$, depending on a parameter $u=(u_i)\in\C^\ell$, defined by the equations
\[
P(x_1,\cdots,x_d)+\sum_{i=1}^\ell u_i b_i(x_1,\cdots,x_d)=0,
\]
where $\{b_i\}$ is a linearly independent subset of $\C[x_1,\cdots,x_d]$ whose span is complementary to the ideal $(P,\frac{\partial P}{\partial x_1},\cdots,\frac{\partial P}{\partial x_d})$. So the dimension $\ell$ of the parameter space equals the codimension of this ideal, which is finite because the singularity is isolated.

\begin{example} \label{ex:typea-semiuniversal}
Consider the type-$\A_\ell$ Kleinian singularity, given by the polynomial $P(x,y,z)=x^{\ell+1}+yz$. The ideal $(P,\frac{\partial P}{\partial x},\frac{\partial P}{\partial y},\frac{\partial P}{\partial z})$ is $(x^\ell,y,z)$, so the two meanings of the letter $\ell$ are consistent, and the set $\{b_i\}$ can be chosen to be $\{x^{\ell-1},\cdots,x,1\}$. Thus, the equation defining a general hypersurface in the semiuniversal deformation is
\[
x^{\ell+1}+u_1 x^{\ell-1}+\cdots u_{\ell-1} x+ u_\ell + yz = 0.
\] 
\end{example}

Now recall that in our simple Lie algebra $\fg$ there is a unique \emph{subregular} nilpotent orbit $\cO_{\subreg}\prec\cO_{\reg}$; it turns out that the codimension of $\cO_{\subreg}$ in $\overline{\cO_\reg}=\cN$ is always $2$ (in the case $\fg=\fsl_n$, we saw this in Example~\ref{ex:sln-min-subreg}). Let $\cS_{\subreg}$ denote the Slodowy slice $\cS_X$ for some $X\in\cO_{\subreg}$. By the results of Section~\ref{sec:slodowy}, $\cS_{\subreg}\cap\cN$ is a normal surface with an isolated singularity. The explanation for the above labelling of subgroups $\Gamma$ of $SL_2(\C)$ is the following remarkable connection between the two classes of surface singularities:

\begin{theorem}[Brieskorn, see~\cite{slod}] \label{thm:brieskorn}
If $\fg$ is of type $\A_\ell$, $\D_\ell$ or $\E_\ell$ and $\Gamma\subset SL_2(\C)$ is of the corresponding type, we have an isomorphism
\[ \cS_{\subreg}\cap\cN \cong \C^2/\Gamma. \]
Moreover, the family of varieties $\cS_{\subreg}\cap\chi^{-1}(u)$, where $\chi$ is the adjoint quotient map and $u$ runs over $\C^\ell$, is isomorphic to the semiuniversal deformation of $\C^2/\Gamma$.
\end{theorem}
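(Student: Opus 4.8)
The approach is Brieskorn's, as presented by Slodowy~\cite{slod}; I would organise it in four steps. Write $\cS:=\cS_{\subreg}$ and let $\psi:=\chi|_{\cS}\colon\cS\to\C^\ell$ be the restriction of the adjoint quotient. \emph{Step 1 (the family).} Since $\cO_{\subreg}$ has codimension $2$ in $\cN$, Proposition~\ref{prop:slodowy} gives $\dim\cS=\ell+2$; every fibre $\cS\cap\chi^{-1}(u)$ is a transverse intersection of $\cS$ with $\chi^{-1}(u)$, hence by Theorem~\ref{thm:kostant} has pure dimension $2$; as $\cS$ is a smooth affine space and all fibres are equidimensional of the expected dimension, $\psi$ is flat. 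The central fibre $\psi^{-1}(0)=\cS\cap\cN$ is a normal surface with an isolated singularity at $X$, by Section~\ref{sec:slodowy}, and it carries a symplectic structure on its smooth locus (the transverse Poisson structure on the slice). Finally the $\fsl_2$-triple $(X,[X,Y],Y)$ yields a $\C^*$-action contracting $\cS$ to $X$, for which $\psi$ is equivariant with strictly positive weights on the coordinates of $\C^\ell$; this matches the conical nature of $\C^2/\Gamma$ and of its semiuniversal deformation and is used in Step~4.

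\emph{Step 2 (simultaneous resolution).} Recall the Grothendieck--Springer variety $\tfg=\{(Z,\fb)\,|\,\fb\text{ a Borel},\ Z\in\fb\}$, with its maps $\tfg\to\fg$ and $\tfg\to\mathfrak{t}$, fitting into a commutative square over $\chi\colon\fg\to\mathfrak{t}/W=\C^\ell$ and the quotient $\mathfrak{t}\to\mathfrak{t}/W$; the induced map $\tfg\to\fg\times_{\mathfrak{t}/W}\mathfrak{t}$ is proper, birational, an isomorphism over the regular locus, and restricts over each $u'\in\mathfrak{t}$ to a resolution of $\chi^{-1}(\overline{u'})$. Setting $\tcS:=\tfg\times_\fg\cS$ we obtain a proper birational map $\tcS\to\cS\times_{\C^\ell}\mathfrak{t}$, an isomorphism over the regular locus, hence a resolution, and simultaneous over $\mathfrak{t}$. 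Over $0\in\mathfrak{t}$ it gives a resolution $\pi\colon\tcS_0\to\cS\cap\cN$, namely the restriction of the Springer resolution $T^*\cB\to\cN$; since $\cS$ meets only $\cO_{\subreg}$ and $\cO_{\reg}$, and $\cO_{\subreg}$ in $\{X\}$ alone, the exceptional set of $\pi$ is exactly the subregular Springer fibre $\cB_X$, and because the Springer resolution is symplectic so is $\pi$, whence $\pi$ is the minimal resolution of $\cS\cap\cN$.

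\emph{Step 3 (identification of the central fibre).} The structure of the subregular Springer fibre --- available precisely because $\fg$ is of simply-laced type $\A_\ell$, $\D_\ell$ or $\E_\ell$ --- says that $\cB_X$ is a connected union of smooth rational curves whose dual graph is the Dynkin diagram $\mathrm{X}_\ell$ (for $\fsl_n$ it is a chain of $n-1$ copies of $\Pp^1$; the general case is classical, see~\cite{slod}). As $\pi$ is crepant, adjunction forces each component to be a $(-2)$-curve, so $\cS\cap\cN$ is a rational double point with resolution graph $\mathrm{X}_\ell$; by the du~Val classification of rational double points it is the Kleinian singularity $\C^2/\Gamma$ of type $\mathrm{X}_\ell$, which proves the first assertion. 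This step is the main obstacle: pinning down that the exceptional set of the Springer resolution over a subregular element is exactly the Dynkin configuration of $(-2)$-curves of type $\mathrm{X}_\ell$ is the geometric heart of the theorem, and is the place where being simply laced is essential.

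\emph{Step 4 (the deformation).} Here I would invoke the Brieskorn--Tjurina theory of deformations of rational double points (see~\cite{slod}): the semiuniversal deformation of the type-$\mathrm{X}_\ell$ Kleinian singularity has smooth base of dimension $\ell$ and acquires a simultaneous resolution after pullback along $\mathfrak{t}\to\mathfrak{t}/W$, and conversely a deformation over a smooth $\ell$-dimensional base, $\C^*$-equivariant with positive weights, that admits a simultaneous resolution after the degree-$|W|$ base change $\mathfrak{t}\to\mathfrak{t}/W$, is isomorphic to it. By Steps~1--3 the family $\psi\colon\cS\to\C^\ell$ has base of dimension $\ell$, is $\C^*$-equivariant with positive weights, and acquires the simultaneous resolution $\tcS\to\cS\times_{\C^\ell}\mathfrak{t}$ after base change along $\mathfrak{t}\to\mathfrak{t}/W$; hence it is the semiuniversal deformation of $\cS\cap\cN\cong\C^2/\Gamma$. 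For type $\A_\ell$ one may instead argue directly: computing $\cS_{\subreg}$ in coordinates, as in Example~\ref{ex:sl3-subreg}, yields exactly the equation $x^{\ell+1}+u_1x^{\ell-1}+\cdots+u_{\ell-1}x+u_\ell+yz=0$ of Example~\ref{ex:typea-semiuniversal}. The remaining care needed in the general case is to check that the Grothendieck--Springer family genuinely satisfies the hypotheses of the characterisation --- \emph{mini}versality, not merely versality --- for which the positivity of the $\C^*$-weights and the identification of the base change with $\mathfrak{t}\to\mathfrak{t}/W$ are the crucial points.
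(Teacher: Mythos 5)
The paper itself offers no proof of Theorem~\ref{thm:brieskorn}: it is quoted from Brieskorn and Slodowy~\cite{slod}, with only the type-$\A$ instances checked by hand in Examples~\ref{ex:sl3-subreg} and~\ref{ex:sl4-subreg}. So your sketch must be measured against the classical argument, and your Steps~1--3 do reproduce its standard route: transversality and flatness of the slice family, the Grothendieck--Springer map restricting to a simultaneous resolution of that family, the exceptional set over the central fibre being exactly the subregular Springer fibre, and the Tits--Steinberg description of that fibre (a Dynkin configuration of rational curves in the simply-laced case) combined with crepancy, adjunction and the du~Val/tautness classification to identify $\cS_{\subreg}\cap\cN$ with $\C^2/\Gamma$. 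You cite rather than prove the key geometric input (the structure of the subregular Springer fibre), but since the paper cites the entire theorem this is a reasonable level of detail.

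The genuine gap is in Step~4. The ``converse characterisation'' you invoke is false as stated: a deformation of the simple singularity over a smooth $\ell$-dimensional base, $\C^\times$-equivariant with positive weights, acquiring a simultaneous resolution after base change along $\mathfrak{t}\to\mathfrak{t}/W$, need not be semiuniversal. Already in type $\A_1$, pull back the semiuniversal family $x^2+yz=u$ along $u\mapsto u^2$: the family $x^2+yz=u^2$ is $\C^\times$-equivariant with positive weight on $u$, and after the base change $t\mapsto t^2$ it becomes $x^2+yz=t^4$, which (rewritten as $uv=(t^2-z)(t^2+z)$) admits a small resolution, obtained by blowing up the ideal $(u,\,t^2-z)$, that is a simultaneous resolution restricting to the minimal resolution over $t=0$; yet this family is not semiuniversal, since its classifying map $u\mapsto u^2$ has vanishing differential at $0$. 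So positivity of weights plus simultaneous resolution after the Weyl-group base change does not pin down miniversality, which is precisely the delicate point you flagged. The actual argument in~\cite{slod} proves versality: one shows that the Kodaira--Spencer map of the resolved family $\tcS\to\mathfrak{t}$ at $0$ is an isomorphism onto the base of the semiuniversal deformation of the minimal resolution (canonically $H^1$ of its tangent sheaf, of dimension $\ell$), and then descends through the $W$-action, using the relation between deformations of the singularity and of its resolution; the $\C^\times$-action serves only to globalise this infinitesimal statement (\`a la Pinkham), not to replace it. Your direct computation disposes of type $\A_\ell$, but for types $\D_\ell$ and $\E_\ell$ the semiuniversality assertion needs this versality argument rather than the criterion you state.
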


\begin{example}
Let $\fg=\fsl_3$, which is of type $\A_2$. Then $\Gamma$ is cyclic of order $3$, and the Kleinian singularity $\C^2/\Gamma$ is given by the equation $x^3+yz=0$. The calculation we did in Example~\ref{ex:sl3-subreg} already makes it clear that $\cS_{\subreg}\cap\cN$ is isomorphic to $\C^2/\Gamma$, and the remaining part of Theorem~\ref{thm:brieskorn} is equally easy to check.
\end{example}

\begin{example} \label{ex:sl4-subreg}
Let $\fg=\fsl_4$, which is of type $\A_{3}$. Calculations similar to those in Example~\ref{ex:sl3-subreg} give:
\[
\cS_{\subreg}=\left\{\begin{bmatrix}
a&1&0&0\\
b&a&1&0\\
c&b&a&d\\
e&0&0&-3a
\end{bmatrix}\right\},\ 
\begin{array}{l} 
\chi_1=-6a^2-2b,\\ 
\chi_2=-8a^3+4ab+c,\\
\chi_3=-3a^4+6a^2b-3ac-de.
\end{array}
\]
Setting $\chi_i=u_i$ and solving for $b$ and $c$, one sees that $\cS_{\subreg}\cap\chi^{-1}(u)$ is isomorphic to the following hypersurface in $\C^3$:
\[
\{(a,d,e)\in\C^3\,|\,81a^4+de+9u_1a^2+3u_2a+u_3=0\}.
\]
This clearly gives a family isomorphic to the semiuniversal deformation of the Kleinian singularity of type $\A_3$, as described in Example~\ref{ex:typea-semiuniversal}. For comparison with the next example, we record that here the group $G_{X,Y}$ is isomorphic to $\C^\times$, and its action on $\cS_{\subreg}$ (which necessarily preserves each intersection $\cS_{\subreg}\cap\chi^{-1}(u)$) is the action of $\C^\times$ given by fixing $a,b,c$ and scaling $d$ and $e$ by inverse scalars.
\end{example}

The obvious question left unanswered by Theorem~\ref{thm:brieskorn} is what happens for the simple Lie algebras of types other than $\A_\ell,\D_\ell,\E_\ell$. Let us consider an example. 

\begin{example}
Consider the Lie algebra 
\[ \fg=\{[a_{ij}]\in\Mat_5\,|\,a_{6-j,6-i}=-a_{ij},\text{ for all }i,j\}. \] 
(Note that $\fg$ consists of matrices that are `skew-symmetric about the other diagonal'.) It is easy to show that $\fg$ is isomorphic to $\fso_5$, so it is simple and has type $\mathrm{B}_2$. Choosing suitable $X,Y$, one finds that
\[
\cS_{\subreg}=\left\{\begin{bmatrix}
a&b&0&0&0\\
0&0&1&0&0\\
0&c&0&-1&0\\
d&0&-c&0&-b\\
0&-d&0&0&-a
\end{bmatrix}\right\},
\begin{array}{l} 
\chi_1=-a^2-2c,\\ 
\chi_2=2(a^2c+bd).
\end{array}
\]
So $\cS_{\subreg}\cap\chi^{-1}(u)$ is isomorphic to the following hypersurface in $\C^3$:
\[
\{(a,b,d)\in\C^3\,|\,a^4-2bd+u_1a^2+u_2=0\}.
\]
This family of hypersurfaces is a deformation of the Kleinian singularity of type $\A_3$, but compared with that of Example~\ref{ex:sl4-subreg}, it has only two parameters, not the full three of the semiuniversal deformation: in other words, there is no `$a$ term' in the equation. One can explain this deficiency in terms of the symmetry group $G_{X,Y}$ of $\cS_{\subreg}$, which in this case has two connected components. The identity component is isomorphic to $\C^\times$ and acts by fixing $a,c$ and scaling $b$ and $d$ by inverse scalars, analogously to the situation of Example~\ref{ex:sl4-subreg}. But there is also an element of the non-identity component that fixes $b,c,d$ and sends $a$ to $-a$, precluding the possibility of an `$a$ term' in the above equation.
\end{example}

Motivated by such considerations of symmetry, Slodowy realized that to extend Theorem~\ref{thm:brieskorn} to the other types of simple Lie algebras, one needs to assign to each type not a single finite subgroup of $SL_2(\C)$ but a pair of such subgroups $\Gamma,\Gamma'$, with $\Gamma$ being a normal subgroup of $\Gamma'$. The correct subgroups are specified in the following list:
\begin{itemize}
\item[$\mathrm{B}_\ell$:] $\Gamma$ of type $\A_{2\ell-1}$, $\Gamma'$ of type $\D_{\ell+2}$, $\Gamma'/\Gamma\cong S_2$;
\item[$\mathrm{C}_\ell$:] $\Gamma$ of type $\D_{\ell+1}$, $\Gamma'$ of type $\D_{2\ell}$, $\Gamma'/\Gamma\cong S_2$;
\item[$\mathrm{F}_4$:] $\Gamma$ of type $\E_6$, $\Gamma'$ of type $\E_7$, $\Gamma'/\Gamma\cong S_2$;
\item[$\mathrm{G}_2$:] $\Gamma$ of type $\D_4$, $\Gamma'$ of type $\E_7$, $\Gamma'/\Gamma\cong S_3$.
\end{itemize}
It is automatic that $\Gamma'/\Gamma$ acts on the Kleinian singularity $\C^2/\Gamma$.

\begin{theorem}[Slodowy~\cite{slod}] \label{thm:slodowy}
If $\fg$ is of type $\mathrm{B}_\ell$, $\mathrm{C}_\ell$, $\mathrm{F}_4$ or $\mathrm{G}_2$ and $\Gamma,\Gamma'\subset SL_2(\C)$ are as given above, then we have an isomorphism
\[
\cS_{\subreg}\cap\cN \cong \C^2/\Gamma,
\]
under which the action of $\Gamma'/\Gamma$ on $\C^2/\Gamma$ corresponds to the action of some subgroup of $G_{X,Y}$ on $\cS_{\subreg}\cap\cN$. Moreover, the family of varieties $\cS_{\subreg}\cap\chi^{-1}(u)$,  $u\in\C^\ell$, satisfies a suitable universal property among $(\Gamma'/\Gamma)$-equivariant deformations of $\C^2/\Gamma$.
\end{theorem}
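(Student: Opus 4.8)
The plan is to reduce the statement to Brieskorn's theorem (Theorem~\ref{thm:brieskorn}) by means of the deformation theory of rational double points equipped with a finite symmetry group. The first step is to show that $\cS_\subreg\cap\cN$ is itself a rational double point. By Proposition~\ref{prop:slodowy} and the normality assertion in Theorem~\ref{thm:kostant}, it is a normal surface with an isolated singularity; restricting Springer's resolution $\mu\colon\tcN\to\cN$ to it produces a resolution $\mu^{-1}(\cS_\subreg\cap\cN)\to\cS_\subreg\cap\cN$ whose exceptional fibre over the singular point $X$ is the Springer fibre $\cB^X$, a tree of projective lines, so the singularity is rational; and as a slice of $\cN$ it is, like $\cN$, Gorenstein with trivial canonical sheaf, so it is a rational double point, say of type $R$. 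To identify $R$ I would read off the dual graph of $\cB^X$: it is a classical computation that for subregular $X$ this graph is a chain of $2\ell-1$ nodes when $\fg$ has type $\mathrm{B}_\ell$, a $\D_{\ell+1}$-diagram for type $\mathrm{C}_\ell$, and an $\E_6$- resp.\ $\D_4$-diagram for types $\mathrm{F}_4$ and $\mathrm{G}_2$; thus $R$ is the type of the Kleinian group $\Gamma$ appearing in the statement. The key point is that $R$ has strictly more nodes than $\rk\fg$ --- this ``unfolding'' is exactly what creates room for the nontrivial symmetry treated next.

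The second step is to identify the group acting. The joint stabiliser $G_{X,Y}$ acts on $\cS_\subreg$, preserving each fibre $\cS_\subreg\cap\chi^{-1}(u)$ setwise; its identity component is a one-dimensional torus realising the standard $\C^\times$-action on $\cS_\subreg\cap\cN\cong\C^2/\Gamma$, while --- and this is the phenomenon absent in the simply-laced case --- its component group $A:=G_{X,Y}/G_{X,Y}^\circ$ is isomorphic to $S_2$ for types $\mathrm{B}_\ell,\mathrm{C}_\ell,\mathrm{F}_4$ and to $S_3$ for type $\mathrm{G}_2$, acting on the Springer fibre $\cB^X$ through the diagram automorphisms of $R$. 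It then remains to verify, type by type via the McKay correspondence, that an automorphism group of $\C^2/\Gamma$ inducing this permutation of the exceptional curves is induced by conjugation inside $SL_2(\C)$ by an overgroup $\Gamma'\supseteq\Gamma$ with $\Gamma'/\Gamma\cong A$, and that $(\Gamma,\Gamma')$ is precisely the pair tabulated before the theorem. This gives the isomorphism $\cS_\subreg\cap\cN\cong\C^2/\Gamma$ intertwining the $G_{X,Y}$-action with the $\Gamma'/\Gamma$-action.

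The third step is the universal property. I would show that $\chi|_{\cS_\subreg}\colon\cS_\subreg\to\C^\ell$ is a semiuniversal $A$-equivariant deformation of $\cS_\subreg\cap\cN\cong\C^2/\Gamma$. Since $A$ is finite, hence linearly reductive over $\C$, the semiuniversal $A$-equivariant deformation of a rational double point of type $R$ exists; its base is the $A$-fixed subspace $(T^1)^A$ of the full deformation space $T^1\cong\C^{|R|}$, whose dimension is the number of $A$-orbits on the nodes of $R$, which equals $\ell$, and Brieskorn's theorem applied to the simply-laced Lie algebra of type $R$ makes this model fully explicit. So $\chi|_{\cS_\subreg}$ and the sought object are both $A$-equivariant deformations of $\C^2/\Gamma$ over bases of dimension $\ell$, and it is enough to verify that the $A$-equivariant Kodaira--Spencer map of $\chi|_{\cS_\subreg}$ is surjective: it is then an isomorphism onto $(T^1)^A$ by the dimension count, and the two families coincide. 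Surjectivity I would obtain by adapting Brieskorn's proof of the simply-laced case: the Grothendieck simultaneous resolution $\tfg\to\fg$ restricts, over $\cS_\subreg$, to an $A$-equivariant simultaneous resolution of the family $\chi|_{\cS_\subreg}$, and the $|W(\fg)|$-fold base change $\mathfrak{h}\to\C^\ell$ it involves is exactly the one required, since $W(\fg)=W(R)^A$; an $A$-equivariant refinement of the Brieskorn--Tyurina characterisation of semiuniversal deformations of rational double points then yields versality.

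The principal obstacle is the coupling of the first two steps: computing the subregular Springer fibre and its dual graph, determining the component group $G_{X,Y}/G_{X,Y}^\circ$ together with its action on that fibre, and matching the resulting data with the prescribed pair $\Gamma\trianglelefteq\Gamma'$ of Kleinian groups. This admits no uniform treatment, and Slodowy carries it out type by type, supplementing the structural arguments with explicit normal forms for $\cS_\subreg$ and for the defining equations of $\cS_\subreg\cap\chi^{-1}(u)$ of exactly the kind displayed above for type $\mathrm{B}_2$. Type $\mathrm{G}_2$ requires extra care: here $A\cong S_3$ is nonabelian, and controlling its action on the four exceptional curves of the $\D_4$-singularity, and matching it with the pair consisting of the binary dihedral group of type $\D_4$ inside the binary octahedral group of type $\E_7$, is noticeably more delicate than the involutive symmetries occurring in the other three cases.
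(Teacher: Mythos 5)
The paper offers no proof of this theorem at all --- it is an expository survey that simply cites Slodowy's monograph \cite{slod} --- and your outline is a faithful summary of Slodowy's actual argument there: identify the slice as a rational double point whose type $R$ is read off from the Dynkin curve (the subregular Springer fibre), match the action of the component group of $G_{X,Y}$ by diagram automorphisms of $R$ with $\Gamma'/\Gamma$ via the McKay correspondence, and obtain equivariant semiuniversality from the simultaneous resolution together with the folding relation $W(\fg)\cong W(R)^{\Gamma'/\Gamma}$. One peripheral slip: the identity component of $G_{X,Y}$ is not always a one-dimensional torus (for the subregular orbit in type $\mathrm{G}_2$ the reductive centralizer is the finite group $S_3$), but nothing in your argument depends on that claim.
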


One consequence of Theorems~\ref{thm:brieskorn} and~\ref{thm:slodowy} is that the isomorphism class of a simple Lie algebra $\fg$ can be determined purely by examining the surface singularity $\cS_{\subreg}\cap\cN$ and its deformation $\cS_{\subreg}\cap\chi^{-1}(u)$.

The universal property of the deformation $\cS_X\cap\chi^{-1}(u)$ has been generalized to non-subregular $X$ by Lehn--Namikawa--Sorger~\cite{lns}.

\section{Other minimal degenerations}
\label{sec:minsing}

In the previous section we considered the isolated singularity arising from the minimal degeneration $\cO_{\subreg}\prec\cO_{\reg}$. Further examples come from the other minimal degenerations of nilpotent orbits. Notably, at the other extreme of the closure order, we can consider $\{0\}\prec\cO_{\min}$. 

The singularity of $\overline{\cO_{\min}}=\cO_{\min}\cup\{0\}$ at $0$ is of a standard type. Let $\Pp(\fg)$ be the projective space whose points are the lines $L$ through $0$ in $\fg$. Those $L$ that lie in $\cO_{\min}$ form a closed subvariety $\Pp(\cO_{\min})$ of $\Pp(\fg)$. Being a projective variety with a homogeneous $G$-action, $\Pp(\cO_{\min})$ belongs to the class of \emph{partial flag varieties} for $G$. We have a tautological line bundle $Z\to\Pp(\cO_{\min})$, where the total space is defined by
\[
Z:=\{(X,L)\in\fg\times\Pp(\cO_{\min})\,|\,X\in L\}.
\]
Since $\Pp(\cO_{\min})$ is nonsingular, so is $Z$. The first projection $Z\to\overline{\cO_{\min}}$ is a resolution of singularities; it contracts the zero section of the line bundle $Z\to\Pp(\cO_{\min})$ to the single point $0$, while mapping the complement of this zero section isomorphically onto $\cO_{\min}$.

\begin{example} \label{ex:sln-minsing}
Take $\fg=\fsl_n$. Then $\cO_{\min}=\{X\in\fsl_n\,|\,\rk(X)=1\}$. A rank-$1$ matrix $X\in\Mat_n$ can be determined by specifying its image $V_1$ and kernel $V_{n-1}$ (subspaces of $\C^n$ of dimensions $1$ and $n-1$ respectively) as well as specifying the induced linear map $\C^n/V_{n-1}\to V_1$. The trace of $X$ is zero if and only if $V_1\subseteq V_{n-1}$. Hence $\Pp(\cO_{\min})$ can be identified with a variety of partial flags, where `partial flag' has the traditional sense of a chain of subspaces of $\C^n$:
\[
\Pp(\cO_{\min})\cong\{0\subset V_1\subseteq V_{n-1}\subset\C^n\,|\,\dim V_i=i\}.
\]
Then $Z\to\Pp(\cO_{\min})$ is identified with the line bundle over this partial flag variety where the fibre over $V_1\subseteq V_{n-1}$ is
\[
\{X\in\fsl_n\,|\,X(\C^n)\subseteq V_1,\, X(V_{n-1})=0\},
\]
and the singular variety $\overline{\cO_{\min}}$ is obtained from this line bundle by contracting the zero section to a point. Similar constructions of vector bundles over partial flag varieties will play a large role in the next section.
\end{example} 

The singularities arising from minimal degenerations of nilpotent orbits have now been completely described: in types $\A_\ell$, $\mathrm{B}_{\ell}$, $\mathrm{C}_{\ell}$ and $\D_\ell$ by Kraft--Procesi~\cite{kp2,kp3}, in type $\mathrm{G}_2$ by Kraft~\cite{kraft}, and in types $\E_\ell$ and $\mathrm{F}_4$ by Fu--Juteau--Levy--Sommers~\cite{fjls}.

The result for $\fsl_n$ (i.e.\ type $\A_{n-1}$) is particularly nice: in that case the singularities we have already considered, at the two extremes of the closure order, suffice to descibe all minimal degenerations. For the purpose of this statement, a \emph{minimal singularity} of type $\A_m$ means the singularity of $\overline{\cO_{\min}}$ at $0$ when $\fg=\fsl_{m+1}$, as discussed in Example~\ref{ex:sln-minsing}.
 
\begin{theorem}[Kraft--Procesi~\cite{kp2}] \label{thm:kp-mindeg}
Suppose $\fg=\fsl_n$ and $\cO_\mu\prec\cO_\lambda$. Let $\cS_\mu$ denote $\cS_X$ for some $X\in\cO_\mu$, so $\cS_\mu\cap\overline{\cO_{\lambda}}$ is a variety with an isolated singularity. Recall from Proposition~\ref{prop:sln-mindeg} that $\mu$ is obtained from $\lambda$ by moving a single corner box down and to the left.
\begin{itemize}
\item If the box moves one row down and $m$ columns left, then $\cS_\mu\cap\overline{\cO_{\lambda}}$ is isomorphic to a Kleinian singularity of type $\A_m$.
\item If the box moves $m$ rows down and one column left, then $\cS_\mu\cap\overline{\cO_{\lambda}}$ is isomorphic to a minimal singularity of type $\A_m$.
\end{itemize}
\end{theorem}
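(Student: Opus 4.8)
The plan is to follow Kraft and Procesi's original strategy, which realises nilpotent orbit closures and their Slodowy slices inside spaces of matrices and exploits a ``duality'' between adding and deleting rows/columns. The key observation is that when $\cO_\mu\prec\cO_\lambda$ with the moved box going down $r$ rows and left $c$ columns, one of $r,c$ equals $1$, and the transpose operation $\lambda\mapsto\lambda^{\mathbf{t}}$ interchanges ``one row down, $m$ columns left'' with ``$m$ rows down, one column left.'' Since transposing a partition corresponds geometrically to an isomorphism $\cS_\mu\cap\overline{\cO_\lambda}\cong\cS_{\mu^{\mathbf{t}}}\cap\overline{\cO_{\lambda^{\mathbf{t}}}}$ (the Kraft--Procesi row/column removal reductions identify these singularities), and since a Kleinian singularity of type $\A_m$ is what Brieskorn's theorem (Theorem~\ref{thm:brieskorn}) produces for the subregular slice in $\fsl_{m+1}$ while a minimal singularity of type $\A_m$ is by definition the singularity at $0$ of $\overline{\cO_{\min}}\subset\fsl_{m+1}$, the two bullet points of the theorem are really a single statement seen from the two sides of transposition. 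So the first step is to prove one of the two cases directly and then deduce the other by transposition.

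For the direct computation I would take the ``one row down, $m$ columns left'' case and perform a \emph{row/column removal reduction}: Kraft--Procesi show that if $\lambda$ and $\mu$ agree outside of a sub-rectangle, one may delete common rows and columns without changing the isomorphism type of the transverse slice $\cS_\mu\cap\overline{\cO_\lambda}$. Concretely, realise $\overline{\cO_\lambda}\subset\fsl_n$ as a variety of nilpotent matrices, choose the standard $\fsl_2$-triple $(X,H,Y)$ with $X\in\cO_\mu$ built from the Jordan blocks of $\mu$, and compute $\cS_X=X+\ker(\operatorname{ad}Y)$ explicitly. After stripping away the Jordan blocks that are unaffected by the box move, one is reduced to a very small partition: moving a corner box one row down and $m$ columns left, after removal, leaves the partition $\lambda=(m+1,1)$ degenerating to $\mu=(m,2)$ inside $\fsl_{m+3}$, or some equally small model. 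At that point the slice is a surface and the computation is of the same character as Examples~\ref{ex:sl3-subreg} and~\ref{ex:sl4-subreg}: one writes out the generic matrix in $\cS_X$, imposes nilpotency (equivalently $\chi_i=0$ for all $i$), eliminates the redundant coordinates, and reads off a single equation of the form $x^{m+1}+yz=0$, which is the Kleinian singularity of type $\A_m$ by Klein's table.

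The second step then handles the ``$m$ rows down, one column left'' case by applying the transpose. One must check that transposition of partitions genuinely induces the claimed isomorphism of slices; this follows from the Kraft--Procesi description of $\overline{\cO_\lambda}$ via the Spaltenstein/transpose duality (equivalently, from the fact that the nilpotent pieces are cut out symmetrically in the two realisations coming from $\lambda$ and $\lambda^{\mathbf{t}}$). Granting that, the box ``$m$ rows down, one column left'' for $\lambda$ becomes ``one row down, $m$ columns left'' for $\lambda^{\mathbf{t}}$, which the first step already identified with a Kleinian singularity of type $\A_m$; but on the $\lambda^{\mathbf{t}}$ side this same singularity is, by Brieskorn's theorem applied to $\fsl_{m+1}$, isomorphic to $\cS_{\subreg}\cap\cN$, and hence --- using once more the row/column removal --- to the singularity at $0$ of $\overline{\cO_{\min}}$ in $\fsl_{m+1}$, i.e.\ a minimal singularity of type $\A_m$. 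Tracking the invariant: a Kleinian singularity of type $\A_m$ \emph{is} $\cS_{\subreg}\cap\cN$ for $\fsl_{m+1}$, so the two bullets say the slice is either that surface or, dually, the cone over the projective variety $\Pp(\cO_{\min})$ from Example~\ref{ex:sln-minsing}.

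The main obstacle is not the surface computation in step one --- that is routine, if a little tedious --- but rather setting up the row/column removal lemma carefully enough that it applies simultaneously to $\overline{\cO_\lambda}$ and to the Slodowy slice, i.e.\ proving that deleting a common row and a common column from the pair $(\lambda,\mu)$ induces an isomorphism $\cS_\mu\cap\overline{\cO_\lambda}\cong\cS_{\mu'}\cap\overline{\cO_{\lambda'}}$ rather than merely a smooth equivalence. This requires the explicit matrix model for orbit closures (as rank conditions on powers, $\overline{\cO_\lambda}=\{N\ \text{nilpotent}\mid \operatorname{rk}N^k\le r_k(\lambda)\ \forall k\}$) together with a clean choice of $\fsl_2$-triple adapted to the box being moved, so that the slice decomposes compatibly with the decomposition of $\C^n$ into the ``active'' small block and the ``inert'' remainder. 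Once that bookkeeping is in place, both bullets fall out of the two special small cases plus transposition.
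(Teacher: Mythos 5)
Your first step is essentially the paper's route for the first bullet: repeated removal of common rows and columns (Corollary~\ref{cor:rowcol} in the paper, obtained there from Maffei's theorem; Kraft--Procesi's matrix-model argument gives only smooth equivalence, which is why the isomorphism-strength statement needs the quiver-variety input) reduces the ``one row down, $m$ columns left'' case to the subregular degeneration $(m,1)\prec(m+1)$ in $\fsl_{m+1}$, where Brieskorn's theorem (or a direct computation as in Example~\ref{ex:sl4-subreg}) identifies the slice with the $\A_m$ Kleinian surface. (Your claimed residual pair ``$\lambda=(m+1,1)$, $\mu=(m,2)$ in $\fsl_{m+3}$'' is not what removal leaves and is not even a one-row-down-$m$-columns-left degeneration, but this is repairable bookkeeping.)

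The second step, however, contains a genuine error that sinks the argument for the second bullet. Transposition of partitions does \emph{not} induce an isomorphism $\cS_\mu\cap\overline{\cO_\lambda}\cong\cS_{\mu^{\bt}}\cap\overline{\cO_{\lambda^{\bt}}}$, and no row/column removal identifies these. First, transposition reverses the dominance order, so from $\mu\trianglelefteq\lambda$ one gets $\lambda^{\bt}\trianglelefteq\mu^{\bt}$ and the pair you write down is not even a degeneration in the right direction. Second, and decisively, $\dim\bigl(\cS_\mu\cap\overline{\cO_\lambda}\bigr)$ equals the codimension of $\cO_\mu$ in $\overline{\cO_\lambda}$, which by Proposition~\ref{prop:sln-mindeg} is $2$ for the ``one row down, $m$ columns left'' case and $2m$ for the ``$m$ rows down, one column left'' case; so the two slices cannot be isomorphic for $m\geq 2$. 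Your closing identification of the Kleinian $\A_m$ singularity with the minimal $\A_m$ singularity is likewise false for $m\geq 2$ (the paper points out that only $a_1$ and $\A_1$ coincide) --- indeed, if your transposition lemma were true, both bullets of the theorem would assert the same singularity, contradicting the theorem you are trying to prove. The correct and much simpler treatment of the second bullet is the one the paper uses: the same row/column removal applied to the ``$m$ rows down, one column left'' case strips the pair down to $\{0\}=\cO_{(1,\cdots,1)}\prec\cO_{(2,1,\cdots,1)}=\cO_{\min}$ in $\fsl_{m+1}$, and the resulting slice is the singularity of $\overline{\cO_{\min}}$ at $0$, which is a minimal singularity of type $\A_m$ by definition; no transposition and no Brieskorn input is needed there.
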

\noindent
The original statement of~\cite{kp2} was in terms of smooth equivalence of singularities; the promotion of this to an isomorphism of varieties will be explained after Corollary~\ref{cor:rowcol}.

\begin{example}
The following diagram shows the closure order on nilpotent orbits for $\fsl_6$ (i.e.\ type $\A_5$), with each orbit represented by the box-diagram of its partition. Every minimal degeneration is labelled by the type of the resulting isolated singularity, as given by Theorem~\ref{thm:kp-mindeg}. Following the notation of~\cite{kp2}, $\A_m$ means a Kleinian singularity of type $\A_m$ and $a_m$ means a minimal singularity of type $\A_m$. (Thus, the uppermost line is labelled $\A_5$ in accordance with Theorem~\ref{thm:brieskorn}, and the lowest line is labelled $a_5$ by definition.) Note that $a_1$ and $\A_1$ refer to the same thing, the nilpotent cone of $\fsl_2$ with its singularity at $0$.
\setlength{\tabwidth}{1ex}
\setlength{\tabheight}{1ex}
\[
\xymatrix@R=6pt@C=10pt{
& *+{\begin{tableau}\row{\c\c\c\c\c\c}\end{tableau}} \ar@{-}^-{\A_5}[d] && \\
& *+{\begin{tableau}\row{\c\c\c\c\c}\row{\c}\end{tableau}} \ar@{-}^-{\A_3}[d] && \\
& *+{\begin{tableau}\row{\c\c\c\c}\row{\c\c}\end{tableau}} \ar@{-}_-{\A_1}[dl] \ar@{-}^-{\A_1}[dr] && \\
*+{\begin{tableau}\row{\c\c\c\c}\row{\c}\row{\c}\end{tableau}} \ar@{-}^-{\A_2}[dr] & &
*+{\begin{tableau}\row{\c\c\c}\row{\c\c\c}\end{tableau}} \ar@{-}_-{\A_2}[dl] & \\
& *+{\begin{tableau}\row{\c\c\c}\row{\c\c}\row{\c}\end{tableau}} \ar@{-}_-{a_2}[dl] 
\ar@{-}^-{a_2}[dr] && \\
*+{\begin{tableau}\row{\c\c\c}\row{\c}\row{\c}\row{\c}\end{tableau}} \ar@{-}^-{a_1}[dr] & &
*+{\begin{tableau}\row{\c\c}\row{\c\c}\row{\c\c}\end{tableau}} \ar@{-}_-{a_1}[dl] & \\
& *+{\begin{tableau}\row{\c\c}\row{\c\c}\row{\c}\row{\c}\end{tableau}} \ar@{-}^-{a_3}[d] && \\
& *+{\begin{tableau}\row{\c\c}\row{\c}\row{\c}\row{\c}\row{\c}\end{tableau}} 
\ar@{-}^-{a_5}[d] && \\
& *+{\begin{tableau}\row{\c}\row{\c}\row{\c}\row{\c}\row{\c}\row{\c}\end{tableau}}  & &
}
\]
\end{example}

\begin{example} \label{ex:g2}
When $\fg$ is of type $\mathrm{G}_2$, there are five nilpotent orbits and the closure order is a total order:
\[
\{0\}\prec\cO_{\min}\prec\cO\prec\cO_{\subreg}\prec\cO_{\reg}.
\]
The minimal-degeneration singularities are described in~\cite{kraft}.
The most interesting is that of the middle orbit closure $\overline{\cO}$ at points of $\cO_{\min}$: in~\cite{fjls} this is shown to be the non-normal isolated surface singularity defined as the image of
\[
\psi:\C^2\to\C^7:(t,u)\mapsto(t^2,tu,u^2,t^3,t^2u,tu^2,u^3).
\]
Note that $\psi$ is injective, so it is the normalization map of the singularity.
\end{example} 


\section{The Springer resolution}

One of the nicest features of the singular varieties $\overline{\cO}$ is that they have resolutions that are vector bundles over partial flag varieties for $G$; see~\cite{panyushev} for a general construction. We have seen one example of this already, the resolution of $\overline{\cO_{\min}}$ described in Section~\ref{sec:minsing}. In this section we will examine a similar resolution of $\overline{\cO_{\reg}}=\cN$.

A \emph{Borel subalgebra} of $\fg$ is a subspace $\fb\subset\fg$, closed under the commutator bracket $[\cdot,\cdot]$, which is conjugate to a subspace of upper-triangular matrices and is maximal with these properties. The Borel subalgebras of $\fg$ are all in the same orbit for the adjoint action of $G$: in particular, they all have the same dimension, which turns out to be $\frac{1}{2}(\dim\fg+\ell)$. Moreover, the Borel subalgebras form a closed subvariety $\cB$ of the Grassmannian of all subspaces of $\fg$ of this dimension. This projective homogeneous $G$-variety $\cB$ is called the \emph{flag variety} of $G$ (or of $\fg$). The name is explained by the following example.
 
\begin{example}
When $\fg=\fsl_n$, the Borel subalgebras are exactly the conjugates of the subalgebra of upper-triangular matrices. Since these are the stabilizing subalgebras of `complete flags' in $\C^n$ (i.e.\ chains of subspaces of $\C^n$, one of each dimension), the variety $\cB$ for $\fsl_n$ can be identified with the variety of such flags,
\[
\cF:=\{0\subset V_1\subset V_2\subset \cdots \subset V_{n-1}\subset\C^n\,|\,\dim V_i=i\}.
\] 
We will make this identification $\cB\cong\cF$ whenever we consider $\fsl_n$.
\end{example} 

We have a tautological vector bundle $\tfg\to\cB$, where the total space is defined by 
\[
\tfg=\{(X,\fb)\in\fg\times\cB\,|\,X\in\fb\}.
\]
Let $\pi:\tfg\to\fg$ denote the first projection. For any $X\in\fg$, the fibre $\pi^{-1}(X)$ can be identified with the variety of Borel subalgebras containing $X$; this variety is called the \emph{Springer fibre} of $X$.
The map $\pi$ as a whole is known as the \emph{Grothendieck--Springer simultaneous resolution}, because it simultaneously provides resolutions of every fibre of the adjoint quotient map $\chi:\fg\to\C^\ell$.

\begin{theorem}[Grothendieck, Springer, see~\cite{slod}]
For any $u\in\C^\ell$, each connected component of $\pi^{-1}(\chi^{-1}(u))$ is a resolution of $\chi^{-1}(u)$. In particular, $\tcN:=\pi^{-1}(\cN)$ \textup{(}which is connected\textup{)} is a resolution of $\cN$.
\end{theorem}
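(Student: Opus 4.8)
The plan is to reduce the statement to two assertions: first, that each connected component of $\pi^{-1}(\chi^{-1}(u))$ is smooth, and second, that the projection from such a component to $\chi^{-1}(u)$ is a proper birational morphism. Properness is automatic, since $\cB$ is a projective variety, so $\pi$ (being the restriction to a closed subvariety of $\fg\times\cB$ of the projection $\fg\times\cB\to\fg$) is a proper morphism; hence so is its restriction to any closed subvariety of the source. Thus the real content is smoothness of the source and birationality of the map onto a connected component of the target.

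For smoothness, I would work with $\pi:\tfg\to\fg$ directly. The key structural fact is that $\tfg$ is a vector bundle over $\cB$, hence smooth and irreducible of dimension $\dim\cB+\dim\fb=\tfrac12(\dim\fg-\ell)+\tfrac12(\dim\fg+\ell)=\dim\fg$. Introducing the Grothendieck map $\tfg\to\fg\times_{\C^\ell/W}\C^\ell$ — equivalently, recording for each $(X,\fb)$ the value in the Cartan $\mathfrak{h}$ of the semisimple part of $X$ read off in the chosen Borel — one checks that this refined map $\tilde\chi:\tfg\to\mathfrak{h}$ is a \emph{smooth} morphism: on the open dense locus of regular semisimple $X$ the Springer fibre is a single $W$-torsor of points, and a local computation (Slodowy) shows $\tilde\chi$ is smooth everywhere. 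Since $\mathfrak{h}$ is smooth and $\tilde\chi$ is a smooth morphism, every fibre $\tilde\chi^{-1}(t)$ is smooth; and $\pi^{-1}(\chi^{-1}(u))$ is a disjoint union of such fibres $\tilde\chi^{-1}(t)$ over the finitely many $t\in\mathfrak{h}$ lying over $u$. So each connected component, being a union of some of these smooth fibres, is smooth. For $u=0$ one has $t=0$ as the unique preimage, $\tcN=\tilde\chi^{-1}(0)=\pi^{-1}(\cN)$ is a single smooth fibre, and its connectedness follows because $\tcN$ retracts onto the zero-section copy of $\cB$ inside it (or: $\tcN$ is irreducible, being the total space of the cotangent bundle $T^*\cB$ under the identification $\fg\cong\fg^*$).

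For birationality, restrict attention to a connected component $\tcN_0$ of $\pi^{-1}(\chi^{-1}(u))$ and note that $\chi^{-1}(u)$ is irreducible with a dense regular orbit $\cO_{\reg,u}$ by Theorem~\ref{thm:kostant}. Over a regular element $X$, the Springer fibre $\pi^{-1}(X)$ is a single point — a regular element lies in a unique Borel subalgebra — so $\pi$ is injective, hence bijective, over the dense open set $\cO_{\reg,u}$; since the source is smooth (in particular normal) and the target is normal (Theorem~\ref{thm:kostant} again), a proper bijective birational morphism onto a dense open subset extends to show the component dominates $\chi^{-1}(u)$ and the map is birational. Comparing dimensions, $\dim\tcN_0=\dim\fg-\ell=\dim\chi^{-1}(u)$, confirms that $\tcN_0$ maps onto all of $\chi^{-1}(u)$, so it is a resolution. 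In the case $u=0$ there is only one component, namely $\tcN$, and $\pi|_{\tcN}$ is an isomorphism over $\cO_{\reg}$, giving the final assertion.

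The main obstacle is the smoothness of the Grothendieck map $\tilde\chi$ (equivalently, the assertion that $\pi^{-1}(\chi^{-1}(u))$ is smooth, not merely generically smooth). This is the technical heart, proved by Grothendieck and developed in detail by Slodowy~\cite{slod}; the argument is a local one comparing the tangent space to $\tfg$ at $(X,\fb)$ with the differential of $\tilde\chi$, using the Jacobson--Morozov normal form (Theorem~\ref{thm:jm}) to reduce to an explicit nilpotent $X$ and a careful count of how the Borel subalgebras containing $X$ move as $X$ is deformed within its fibre. Everything else — properness, the singleton Springer fibre over a regular element, normality of source and target, the dimension count, and the connectedness of $\tcN$ — is either immediate or already recorded above.
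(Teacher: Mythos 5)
The paper does not actually prove this theorem: it is quoted from Slodowy~\cite{slod}, and the only ingredient the text supplies is the connectedness of $\tcN$, argued essentially as you do (it is a vector bundle over the connected variety $\cB$). So your proposal is a reconstruction of the standard proof, and its overall architecture --- properness from projectivity of $\cB$, smoothness from the smoothness of the Grothendieck map $\tilde\chi:\tfg\to\mathfrak{h}$, birationality from the structure of Springer fibres over regular elements --- is the right one.

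There is, however, a genuine error in the birationality step. You assert that ``a regular element lies in a unique Borel subalgebra'' and conclude that $\pi$ is injective over the dense orbit $\cO_{\reg,u}$. This is true only for regular \emph{nilpotent} elements, i.e.\ only when $u=0$. A regular semisimple element lies in exactly $|W|$ Borel subalgebras --- as you yourself note earlier when you describe the generic Springer fibre as a $W$-torsor --- and more generally a regular $X\in\chi^{-1}(u)$ with nonzero semisimple part lies in several Borels, the number being exactly the number of points $t\in\mathfrak{h}$ lying over $u$. So $\pi$ itself is not injective over $\cO_{\reg,u}$ for $u\neq 0$; what is true is that for each such $t$ there is exactly one Borel $\fb$ containing $X$ with $\tilde\chi(X,\fb)=t$. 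The correct argument therefore runs through the finer map $\tilde\chi$ in the birationality step as well as in the smoothness step: each fibre $\tilde\chi^{-1}(t)$ meets $\pi^{-1}(X)$ in a single point for regular $X$. Relatedly, your argument implicitly requires that each fibre $\tilde\chi^{-1}(t)$ be connected (equivalently, that the connected components of $\pi^{-1}(\chi^{-1}(u))$ are exactly these fibres), and you establish this only for $t=0$. If some $\tilde\chi^{-1}(t)$ were disconnected, at most one of its components could dominate $\chi^{-1}(u)$, and the theorem as stated would fail for the others; so this connectedness is part of the content of the theorem and must be proved (Slodowy does so), not assumed.
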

\noindent
Here the resolution maps are the restrictions of $\pi$ to the stated domains. 

The reason that $\tcN$ is connected is that the nilpotent elements of a Borel subalgebra $\fb$ form a vector subspace $\fn_\fb$, the \emph{nilradical} of $\fb$. Thus
\[ 
\tcN = \{(X,\fb)\in\fg\times\cB\,|\,X\in\fn_\fb\}\to\cB
\]
is a sub-vector bundle of the vector bundle $\tfg\to\cB$. The resolution $\tcN\to\cN$ is called the \emph{Springer resolution} of the nilpotent cone. 

\begin{example} \label{ex:sln-springer}
For $\fg=\fsl_n$, the Grothendieck--Springer simultaneous resolution can be described thus:
\[
\tfg\cong\{(X,(V_i))\in\fsl_n\times\cF\,|\,X(V_i)\subseteq V_{i}\text{ for all }i\}.
\]
Notice that if $X$ stabilizes a complete flag $(V_i)$ in the sense of this condition that $X(V_i)\subseteq V_i$ for all $i$, then $X$ is nilpotent if and only if we have the stronger condition $X(V_i)\subseteq V_{i-1}$ for all $i$, where we set $V_0=0$ and $V_n=\C^n$ to take care of the $i=1$ and $i=n$ cases. (In other words, the nilradical of the Borel subalgebra of upper-triangular matrices consists of the \emph{strictly} upper-triangular matrices.) Hence
\[
\tcN\cong\{(X,(V_i))\in\fsl_n\times\cF\,|\,X(V_i)\subseteq V_{i-1}\text{ for all }i\},
\]
and for $X\in\cN$ the Springer fibre has the following description:
\[
\pi^{-1}(X)\cong\{(V_i)\in\cF\,|\,X(V_i)\subseteq V_{i-1}\text{ for all }i\}.
\]
Let us consider these Springer fibres over some particular nilpotent orbits of $\fsl_n$. If $X=0$ we of course have $\pi^{-1}(0)\cong\cF$. At the other extreme, if $X\in\cO_{\reg}$ has a single Jordan block, then there is a unique flag $(V_i)\in\cF$ such that $X(V_i)\subseteq V_{i-1}$ for all $i$, namely the one given by $V_i=\ker(X^i)$, so $\pi^{-1}(X)$ is a single point (in general, the Springer resolution $\tcN\to\cN$ is an isomorphism over $\cO_{\reg}$). If $X\in\cO_{\subreg}=\cO_{(n-1,1)}$, one can easily show that $\pi^{-1}(X)$ is the union of the following $n-1$ projective lines, each of which meets the adajcent lines transversely at a single point and does not meet any of the other lines:
\[
\begin{split}
&\{(V_i)\,|\,V_2=\ker(X),V_3=\ker(X^2),V_4=\ker(X^3),\cdots\},\\
&\{(V_i)\,|\,V_1=\im(X^{n-2}),V_3=\ker(X^2),V_4=\ker(X^3),\cdots\},\\
&\{(V_i)\,|\,V_1=\im(X^{n-2}),V_2=\im(X^{n-3}),V_4=\ker(X^3),\cdots\},\\
&\qquad\vdots\\
&\{(V_i)\,|\,V_1=\im(X^{n-2}),V_2=\im(X^{n-3}),\cdots,V_{n-2}=\im(X)\}.
\end{split}
\]
The geometry of general Springer fibres can be very complicated. 
\end{example}

Slodowy showed in~\cite{slod} that the Grothendieck--Springer simultaneous resolution theorem implies an analogous statement for each slice $\cS_X$ for $X\in\fg$. In particular, the Springer resolution restricts to a resolution $\pi^{-1}(\cS_X\cap\cN)\to\cS_X\cap\cN$. This recovers a much-studied resolution of the Kleinian singularities: 

\begin{theorem}[Brieskorn, Slodowy~\cite{slod}]
If $\Gamma$ is the finite subgroup of $SL_2(\C)$ such that $\cS_{\subreg}\cap\cN\cong\C^2/\Gamma$, then  the resolution 
\[ \pi^{-1}(\cS_{\subreg}\cap\cN)\to \cS_{\subreg}\cap\cN \]
corresponds to the minimal resolution of $\C^2/\Gamma$.
\end{theorem}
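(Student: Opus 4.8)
The plan is to deduce this from Brieskorn's Theorem~\ref{thm:brieskorn} together with the general compatibility between the Springer resolution and Slodowy slices. Recall from Theorem~\ref{thm:brieskorn} that $\cS_{\subreg}\cap\cN\cong\C^2/\Gamma$, and from the discussion following it that the family $\cS_{\subreg}\cap\chi^{-1}(u)$, $u\in\C^\ell$, realizes the semiuniversal deformation of the Kleinian singularity. The key additional input is that the Grothendieck--Springer simultaneous resolution $\pi\colon\tfg\to\fg$, when restricted to the preimage of the slice $\cS_{\subreg}$, gives a \emph{simultaneous} resolution of this whole family: this is Slodowy's refinement mentioned just before the statement, and it is parallel to Grothendieck's simultaneous resolution of the adjoint quotient. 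So the first step is simply to invoke that result: $\pi^{-1}(\cS_{\subreg})\to\cS_{\subreg}$ is a simultaneous resolution of the family $\{\cS_{\subreg}\cap\chi^{-1}(u)\}_{u\in\C^\ell}$, and in particular its restriction over $u=0$ is a resolution of $\cS_{\subreg}\cap\cN\cong\C^2/\Gamma$.

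The second step is to identify this resolution with the minimal one. Since $\C^2/\Gamma$ is a normal surface with a single rational double point, any resolution dominates the minimal resolution, and the minimal resolution is characterized by having no $(-1)$-curves in its exceptional fibre (equivalently, by the canonical bundle being trivial/nef, i.e.\ the resolution being crepant). So it suffices to check one of the following: (a) the exceptional fibre of $\pi^{-1}(\cS_{\subreg}\cap\cN)\to\cS_{\subreg}\cap\cN$ is a tree of $\Pp^1$'s whose dual graph is the Dynkin diagram of type $\mathrm{X}_\ell$ — but this exceptional fibre is precisely the Springer fibre $\pi^{-1}(X)$ for $X\in\cO_{\subreg}$, whose structure for $\fsl_n$ we computed in Example~\ref{ex:sln-springer} (a chain of $n-1$ transverse lines, the $\A_{n-1}$ diagram), with the analogous statement in the other types; or (b) one quotes the classical fact (Brieskorn) that a simultaneous resolution of a deformation of a rational double point is automatically the minimal resolution on each fibre, since a smooth family cannot contain a $(-1)$-curve in a single fibre without it deforming, which is impossible as the general fibre is smooth and affine. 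I would use argument~(b) as the main line, as it is clean and type-independent, and mention~(a) as the concrete picture.

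The main obstacle is not any single hard computation but rather the bookkeeping needed to make Slodowy's simultaneous-resolution statement precise: one must know that $\pi^{-1}(\cS_{\subreg})$ is smooth (it is, being a transverse slice in the smooth variety $\tfg$, by a version of Proposition~\ref{prop:slodowy} applied upstairs), that the map to $\cS_{\subreg}$ is proper and birational onto its image, and that it is flat over the parameter $\C^\ell$ with reduced fibres so that ``simultaneous resolution'' has its usual meaning. Granting those structural facts — all of which are in~\cite{slod} — the argument is short. The only genuinely type-specific remark is the identification of which Dynkin type appears, and that is exactly the content of Theorem~\ref{thm:brieskorn}, which we are assuming.
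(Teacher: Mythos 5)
The paper does not actually prove this theorem: it is stated as a quoted result of Brieskorn and Slodowy, with only the pointer that Slodowy's simultaneous-resolution statement for slices yields the resolution $\pi^{-1}(\cS_{\subreg}\cap\cN)\to\cS_{\subreg}\cap\cN$, followed by an explicit verification for $\fsl_3$. So there is no in-paper argument to compare yours against; what can be assessed is whether your sketch is sound, and your main line (b) is indeed the standard argument and is essentially correct. Granting Slodowy's structural facts --- that $\pi^{-1}(\cS_{\subreg})$ is smooth, that it is smooth over the Cartan subalgebra covering $\C^\ell$, and that the fibres resolve the fibres of the family $\cS_{\subreg}\cap\chi^{-1}(u)$ --- the deformation-theoretic step works: a $(-1)$-curve $E$ in the central fibre has $H^1(E,N)=H^1(\Pp^1,\cO(-1))=0$, so it propagates to all nearby fibres of the smooth family, while the generic fibre is a smooth affine surface (the generic $\chi^{-1}(u)$ is a single orbit meeting the slice transversally) and so contains no complete curve. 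That contradiction forces minimality. These inputs are exactly what the paper itself delegates to \cite{slod}, so relying on them is consistent with the level of the exposition.

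Two caveats. First, your alternative argument (a) is not by itself a proof of minimality: knowing that the exceptional fibre is a union of $\Pp^1$'s with dual graph the Dynkin diagram does not exclude $(-1)$-curves unless you also count components and compare with the known number of exceptional curves of the minimal resolution (any non-minimal resolution has strictly more components), or compute self-intersections; and outside type $\A$ the relevant count is the multiplicity-weighted one for the Dynkin curve (e.g.\ $2\ell-1$ components in type $\mathrm{B}_\ell$, matching $\Gamma$ of type $\A_{2\ell-1}$), which is itself a nontrivial theorem of Tits--Steinberg type. Since you designate (a) as illustrative only, this is not a gap in your main argument. Second, be slightly careful with the phrase ``simultaneous resolution of the family over $\C^\ell$'': the Grothendieck--Springer map only becomes a simultaneous resolution after base change to the Cartan subalgebra, and over a generic parameter $\pi^{-1}(\cS_{\subreg}\cap\chi^{-1}(u))$ is a disjoint union of copies of the smooth fibre rather than a single copy; this does not affect your conclusion (each component is still affine), but the statement as you wrote it should be routed through the base-changed family to be literally true.
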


\begin{example}
Take $\fg=\fsl_3$. Recall the explicit matrix description of an element of $\cS_{\subreg}\cap\cN$ from Example~\ref{ex:sl3-subreg}. After some simple computations of the conditions for a flag $0\subset V_1\subset V_2\subset\C^3$ to belong to the Springer fibre of that matrix, one concludes that
\[
\begin{split}
\pi^{-1}(\cS_{\subreg}\cap\cN)\cong
\{((a,c,d)&,[s:t],[u:v])\in\C^3\times\Pp^1\times\Pp^1\,|\,\\
&\begin{bmatrix}-4a^2&c\\d&-2a\end{bmatrix}\begin{bmatrix}s\\t\end{bmatrix}=\begin{bmatrix}0\\0\end{bmatrix},\\
&\begin{bmatrix}u\quad v\end{bmatrix}\begin{bmatrix}-4a^2&c\\d&-2a\end{bmatrix}=\begin{bmatrix}0\quad 0\end{bmatrix},\\
&2asu=tv\}.
\end{split}
\]
Here the resolution map is the projection onto the $(a,c,d)$ component, which necessarily lies in the type-$\A_2$ Kleinian singularity defined by the equation $8a^3=cd$.
If $(a,c,d)$ is not the singular point $(0,0,0)$, then $[s:t],[u:v]$ are uniquely determined by the above conditions; if $(a,c,d)=(0,0,0)$, then the final condition becomes $tv=0$, so the fibre is a union of two projective lines meeting transversely at a point (this is a special case of the subregular Springer fibre described in Example~\ref{ex:sln-springer}).  
\end{example}

\section{Normality of orbit closures for $\fsl_n$}

In this section we will take $\fg=\fsl_n$ and discuss some special properties of the orbit closures $\overline{\cO_\lambda}$ in this case, in particular the proof by Kraft and Procesi that they are normal varieties. As seen in Example~\ref{ex:g2}, there do exist non-normal nilpotent orbit closures in simple Lie algebras of other types; see~\cite{broer,kraft,kp3,sommers1,sommers2} for results on this. 

We can generalize the Springer resolution of $\cN$ to construct a resolution of $\overline{\cO_\lambda}$ where $\lambda$ is any partition of $n$. For this, let $r_1,\cdots,r_m$ be the lengths of the columns of the box-diagram of $\lambda$, in decreasing order (thus the number $m$ is by definition equal to $\lambda_1$). We consider the partial flag variety
\[
\cF_\lambda=\left\{0=U_0\subset U_1\subset \cdots \subset U_m=\C^n\,\left|\,\dim\frac{U_i}{U_{i-1}}=r_i\right.\right\},
\]
and the vector bundle over $\cF_\lambda$ whose total space is
\[
\tcF_\lambda=\{(X,(U_i))\in\fsl_n\times\cF_\lambda\,|\,X(U_i)\subseteq U_{i-1}\text{ for all }i\}.
\]
Then it is not hard to show that the first projection maps $\tcF_\lambda$ to $\overline{\cO_\lambda}$, and in fact is a resolution $\pi_\lambda:\tcF_\lambda\to\overline{\cO_\lambda}$. If $\lambda=(n)$, then $m=n$ and $r_i=1$ for all $i$, so we are dealing with complete flags and we recover the Springer resolution. 

\begin{example} \label{ex:sln-minres}
If $\lambda=(2,1,\cdots,1)$, we have $m=2$, $r_1=n-1$ and $r_2=1$, so $\cF_{(2,1,\cdots,1)}$ is the projective space of $(n-1)$-dimensional subspaces of $\C^n$, and $\tcF_{(2,1,\cdots,1)}$ is the rank-$(n-1)$ vector bundle over this projective space where the fibre over the $(n-1)$-dimensional subspace $U_1$ is 
\[
\{X\in\fsl_n\,|\,X(\C^n)\subseteq U_1,\,X(U_1)=0\}.
\]
The resolution $\pi_{(2,1,\cdots,1)}:\tcF_{(2,1,\cdots,1)}\to\overline{\cO_{(2,1,\cdots,1)}}$ contracts the zero section of the vector bundle to the singular point $0$ and maps the complement isomorphically onto $\cO_{(2,1,\cdots,1)}$; for $X\in\cO_{(2,1,\cdots,1)}$, the unique $(n-1)$-dimensional subspace $U_1$ satisfying the above conditions is $\ker(X)$. Note that $\pi_{(2,1,\cdots,1)}$ is different from the resolution of $\overline{\cO_{(2,1,\cdots,1)}}$ considered in Example~\ref{ex:sln-minsing}: it is more economical in the sense that the fibre over $0$ is a smaller-dimensional partial flag variety, but less canonical in the sense that it breaks the symmetry between dimension-$1$ and codimension-$1$ subspaces.
\end{example}

There is a connection between these resolutions and \emph{quiver representations}, which really just means diagrams of linear maps between vector spaces. A pair $(X,(U_i))\in\tcF_\lambda$ can be encoded as such a diagram:
\[
\vcenter{
\xymatrix@R=15pt{
\C^n\ar@/^/[dd]^-{\pr}&&&
\\
\\
V_1\ar@/^/[r]^-{\pr}\ar@/^/[uu]^-{X}&V_2\ar@/^/[l]^-{X}\ar@/^/[r]^-{\pr}&V_3\ar@/^/[l]^-{X}\ar@/^/[r]^-{\pr}&\cdots\ar@/^/[r]^-{\pr}\ar@/^/[l]^-{X}&V_{m-2}\ar@/^/[r]^-{\pr}\ar@/^/[l]^-{X}&V_{m-1}\ar@/^/[l]^-{X}
}
}
\]
where $V_i=\C^n/U_i$, $\pr:V_i\to V_{i+1}$ is the canonical projection, and we abuse notation by writing $X$ for the map $V_{i+1}\to V_i$ induced by $X$. Note that here we have $\dim V_i=n-r_1-\cdots-r_i$; we denote this quantity by $v_i$, and let $\bv$ denote the $(m-1)$-tuple $(v_1,\cdots,v_{m-1})$. 

Consider the affine variety $\Lambda_{\bv,n}$ of all diagrams of linear maps
\[
\vcenter{
\xymatrix@R=15pt{
\C^n\ar@/^/[dd]^-{A_0}&&&
\\
\\
\C^{v_1}\ar@/^/[r]^-{A_1}\ar@/^/[uu]^-{B_0}&\C^{v_2}\ar@/^/[l]^-{B_1}\ar@/^/[r]^-{A_2}&\C^{v_3}\ar@/^/[l]^-{B_2}\ar@/^/[r]^-{A_3}&\cdots\ar@/^/[r]^-{A_{m-3}}\ar@/^/[l]^-{B_{3}}&\C^{v_{m-2}}\ar@/^/[r]^-{A_{m-2}}\ar@/^/[l]^-{B_{m-3}}&\C^{v_{m-1}}\ar@/^/[l]^-{B_{m-2}}
}
}
\]
satisfying the equations $A_{i-1}B_{i-1}=B_{i}A_{i}$ for $1\leq i\leq m-2$ and $A_{m-2}B_{m-2}=0$. This is a closed subvariety of the affine space 
\[ \Hom(\C^n,\C^{v_1})\oplus\Hom(\C^{v_1},\C^n)\oplus\Hom(\C^{v_1},\C^{v_2})\oplus\Hom(\C^{v_2},\C^{v_1})\oplus\cdots. \] 
It is almost, but not exactly, correct to say that the diagram defined by $(X,(U_i))\in\tcF_\lambda$ represents a point of $\Lambda_{\bv,n}$: certainly the equations are satisfied, but one has to make some choice of identifications of the vector space $V_i$ with $\C^{v_i}$ for all $i$, and this results in a change-of-basis indeterminacy. To take this into account, we need to consider the natural action of the group $G_{\bv}:=GL_{v_1}\times\cdots\times GL_{v_{m-1}}$ on $\Lambda_{\bv,n}$. This action is free on 
the open subset 
\[ \Lambda_{\bv,n}^s:=\{(A_i,B_i)\in\Lambda_{\bv,n}\,|\,A_i\text{ surjective for all }i\}, \]
so there is a well-defined geometric quotient variety $\Lambda_{\bv,n}^s/G_\bv$, and the correct statement is that we have an isomorphism 
\[
\Lambda_{\bv,n}^s/G_\bv\simto \tcF_\lambda:(A_i,B_i)\mapsto(B_0A_0,(\ker(A_{i-1}\cdots A_1A_0))).
\]
Here and below, when we want to specify a map whose domain is a quotient, we write the formula for the composition with the quotient projection.

It is now natural to consider the quotient $\Lambda_{\bv,n}/G_\bv$, which by definition is the affine variety whose algebra of functions is the invariant ring $\C[\Lambda_{\bv,n}]^{G_\bv}$.

\begin{theorem}[Kraft--Procesi~\cite{kp1}] \label{thm:normality}
We have an isomorphism
\[
\Lambda_{\bv,n}/G_\bv\simto \overline{\cO_{\lambda}}:(A_i,B_i)\mapsto B_0A_0.
\]
Moreover, $\Lambda_{\bv,n}$ is a normal variety and hence so is $\overline{\cO_{\lambda}}$.
\end{theorem}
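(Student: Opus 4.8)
The plan is to establish the isomorphism $\Lambda_{\bv,n}/G_\bv \simto \overline{\cO_\lambda}$ first, and then obtain normality of $\overline{\cO_\lambda}$ as a consequence of the normality of $\Lambda_{\bv,n}$ together with the fact that the formation of $\Spec\C[\Lambda_{\bv,n}]^{G_\bv}$ preserves normality (the invariant ring of a normal ring under a reductive group is normal, since it is integrally closed in its fraction field inside the integrally closed overring). So the real content splits into two independent tasks: (i) identify the affine GIT quotient with the orbit closure, and (ii) prove $\Lambda_{\bv,n}$ is normal.

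For task (i), I would first observe that the map $(A_i,B_i)\mapsto B_0A_0$ is $G_\bv$-invariant and has image inside $\overline{\cO_\lambda}$: on the stable locus $\Lambda_{\bv,n}^s$ this follows from the already-established isomorphism $\Lambda_{\bv,n}^s/G_\bv\simto\tcF_\lambda$ composed with the resolution $\pi_\lambda$, and since $\Lambda_{\bv,n}^s$ is dense in $\Lambda_{\bv,n}$ (one can degenerate any diagram to a stable one, or argue by dimension) the image of all of $\Lambda_{\bv,n}$ lies in the closure $\overline{\cO_\lambda}$. This gives a morphism $\Lambda_{\bv,n}/G_\bv\to\overline{\cO_\lambda}$. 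To see it is an isomorphism, I would argue that it is a bijective morphism between the two affine varieties and that the target is normal --- except that normality of $\overline{\cO_\lambda}$ is what we are trying to prove, so instead the cleaner route is: show the morphism is proper (or at least that the corresponding ring map $\C[\overline{\cO_\lambda}]\to\C[\Lambda_{\bv,n}]^{G_\bv}$ is injective, because the two varieties have the same dimension and $\Lambda_{\bv,n}$ is irreducible with the fibres of $B_0A_0$ over $\cO_\lambda$ being single $G_\bv$-orbits) and that it is a finite birational morphism; then normality of the source $\Lambda_{\bv,n}/G_\bv$ (from task (ii)) forces it to be an isomorphism onto its normalization, which is $\overline{\cO_\lambda}$ only once we know $\overline{\cO_\lambda}$ is normal. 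The logically economical order is therefore: prove $\Lambda_{\bv,n}$ normal, deduce $\Lambda_{\bv,n}/G_\bv$ normal, show the natural finite birational morphism $\Lambda_{\bv,n}/G_\bv\to\overline{\cO_\lambda}$, and conclude it is the normalization map --- hence $\overline{\cO_\lambda}$ is normal precisely because its normalization is already an affine GIT quotient mapping bijectively onto it, the map being an isomorphism by Zariski's main theorem once injectivity of the ring map is checked directly.

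For task (ii), the normality of $\Lambda_{\bv,n}$, I would use the standard strategy of exhibiting $\Lambda_{\bv,n}$ as an irreducible complete intersection that is nonsingular in codimension one, and then invoke Serre's criterion (complete intersection gives Cohen--Macaulay hence condition $S_2$; $R_1$ plus $S_1$, or rather $R_1$ plus $S_2$, gives normality). Concretely: the defining equations $A_{i-1}B_{i-1}=B_iA_i$ and $A_{m-2}B_{m-2}=0$ cut $\Lambda_{\bv,n}$ out of an affine space by the expected number of equations (one checks the count matches $\operatorname{codim}$), and one computes $\dim\Lambda_{\bv,n}$ by showing the stable locus is dense and smooth of the right dimension (it maps onto the irreducible $\tcF_\lambda$ with $G_\bv$-fibres). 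Then irreducibility of $\Lambda_{\bv,n}$ requires ruling out extra components supported off $\Lambda_{\bv,n}^s$, typically by a dimension estimate stratifying by the ranks of the $A_i$. Finally the regularity in codimension one is checked by a local Jacobian computation, showing the singular locus has codimension at least two.

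The main obstacle I expect is proving irreducibility and controlling the dimension of $\Lambda_{\bv,n}$ — in particular showing that the non-stable locus $\Lambda_{\bv,n}\setminus\Lambda_{\bv,n}^s$ has dimension strictly less than $\dim\Lambda_{\bv,n}$, so that it cannot harbour a separate irreducible component and so that the closure-order/normalization argument for the passage to $\overline{\cO_\lambda}$ goes through. This is precisely where the combinatorics of the partition $\lambda$ (the column lengths $r_i$) enters, and it is the heart of the Kraft--Procesi argument; the Serre-criterion and GIT-quotient steps around it are comparatively formal.
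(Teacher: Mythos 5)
The paper only sketches this argument (the theorem is quoted from \cite{kp1}), but the sketch and the example that follows it make the intended route clear, and they expose a genuine gap in your proposal. Your task (ii) — normality of $\Lambda_{\bv,n}$ via irreducibility, the complete-intersection property, regularity in codimension one and Serre's criterion, with the real work being the dimension estimate on the non-stable locus — is exactly the Kraft--Procesi strategy and matches the paper's indication, as does the formal step that normality descends to the invariant-theoretic quotient.

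The gap is in task (i). You propose to identify $\Lambda_{\bv,n}/G_\bv$ with $\overline{\cO_\lambda}$ by exhibiting a finite birational bijective morphism from the (now known to be normal) quotient onto $\overline{\cO_\lambda}$ and invoking Zariski's main theorem ``once injectivity of the ring map is checked''. This does not work: ZMT in the relevant form requires the \emph{target} to be normal, which is precisely what is at stake, and a normalization map can be bijective without being an isomorphism (the cuspidal cubic). Injectivity of $\C[\overline{\cO_\lambda}]\to\C[\Lambda_{\bv,n}]^{G_\bv}$ only gives dominance. The missing ingredient — and the actual content of the Kraft--Procesi proof of the isomorphism — is the \emph{surjectivity} of this ring map, i.e.\ that $\C[\Lambda_{\bv,n}]^{G_\bv}$ is generated by the matrix entries of $B_0A_0$. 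This is proved with the first fundamental theorem of invariant theory for the factors $\GL_{v_i}$ of $G_\bv$ acting on the adjacent $\Hom$-spaces: every invariant is a polynomial in entries of composites of the maps $A_i,B_i$ beginning and ending at $\C^n$, and the relations $A_{i-1}B_{i-1}=B_iA_i$, $A_{m-2}B_{m-2}=0$ reduce every such composite to a power of $B_0A_0$. The example following the theorem in the paper (the case $\lambda=(2,1,\cdots,1)$, where the invariants are generated by the products $a_jb_i$) is exactly an illustration of this step; once surjectivity is in hand, the isomorphism is immediate and no appeal to ZMT is needed.
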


\noindent
Kraft and Procesi could prove that $\Lambda_{\bv,n}$ is normal by proving first that it is a complete intersection (not true of $\overline{\cO_{\lambda}}$, as we saw in Example~\ref{ex:sl3-orbits}). It is easy to see that the property of normality is inherited by quotients under group actions.

Note that the open embedding $\Lambda_{\bv,n}^s\to\Lambda_{\bv,n}$ induces a map on quotients $\Lambda_{\bv,n}^s/G_\bv\to\Lambda_{\bv,n}/G_\bv$ that is far from injective. Indeed, under the above isomorphisms this map $\Lambda_{\bv,n}^s/G_\bv\to\Lambda_{\bv,n}/G_\bv$ corresponds to the resolution map $\tcF_\lambda\to\overline{\cO_{\lambda}}$. To understand this, one has to recall that the points of $\Lambda_{\bv,n}/G_\bv$ correspond to the \emph{closed} $G_\bv$-orbits in $\Lambda_{\bv,n}$. Every $G_\bv$-orbit in $\Lambda_{\bv,n}^s$ is closed in $\Lambda_{\bv,n}^s$, but in general it is not closed in $\Lambda_{\bv,n}$; it gets mapped to the unique closed orbit in its closure in $\Lambda_{\bv,n}$. Compare the map from $(\C^n\setminus\{0\})/\C^\times\cong\Pp^{n-1}$ to $\C^n/\C^\times$; the latter is a single point because the only closed $\C^\times$-orbit in $\C^n$ is $\{0\}$.

\begin{example} 
Continue with Example~\ref{ex:sln-minres}. Since $v_1=1$, we have
\[
\begin{split}
\Lambda_{\bv,n}&=\{(A_0,B_0)\in\Hom(\C^n,\C^1)\times\Hom(\C^1,\C^n)\,|\,A_0B_0=0\}\\
&\cong\{((a_1,\cdots,a_n),(b_1,\cdots,b_n))\in(\C^n)^2\,|\,\sum_{i=1}^n a_ib_i=0\},
\end{split}
\]
with $G_\bv=\C^\times$ scaling the vectors $(a_1,\cdots,a_n)$ and $(b_1,\cdots,b_n)$ by inverse scalars. In this example, Theorem~\ref{thm:normality} is easy to see: the $G_\bv$-invariant polynomial functions on $\Lambda_{\bv,n}$ are generated by the functions $((a_1,\cdots,a_n),(b_1,\cdots,b_n))\mapsto a_j b_i$ as $i$ and $j$ vary, and these functions form the entries of a map $\Lambda_{\bv,n}\to\overline{\cO_{(2,1,\cdots,1)}}$, factoring through the isomorphism of Theorem~\ref{thm:normality}. Note that
\[
\Lambda_{\bv,n}^s=\{(A_0,B_0)\in\Lambda_{\bv,n}\,|\,A_0\neq 0\},
\] 
so $\Lambda_{\bv,n}^s/G_\bv$ has a map to $\Pp^{n-1}$ defined by $(A_0,B_0)\mapsto [a_1:\cdots:a_n]$; this is a rank-$(n-1)$ vector bundle, corresponding to the bundle $\tcF_{(2,1,\cdots,1)}\to\cF_{(2,1,\cdots,1)}$ seen in Example~\ref{ex:sln-minres}. 
\end{example}

For the purposes of the next section it is useful to note a slight generalization of the above results. In the definition of $r_1,\cdots,r_m$ (and hence of $\bv$), we can actually take the column-lengths of $\lambda$ in any order, not necessarily in decreasing order; what is more, we can also include some zeroes among the $r_i$'s (meaning $m$ could be larger than $\lambda_1$). In this more flexible setting, $r_1,\cdots,r_m$ are no longer uniquely determined by $\lambda$, but we will continue to use the notations $\cF_\lambda$ and $\tcF_\lambda$ for the varieties defined using the chosen $r_1,\cdots,r_m$. It is still true that $\pi_\lambda:\tcF_{\lambda}\to\overline{\cO_{\lambda}}$ is a resolution, and we still have the same isomorphism $\Lambda_{\bv,n}^s/G_\bv\simto \tcF_{\lambda}$. Theorem~\ref{thm:normality} requires a modification, because $\Lambda_{\bv,n}$ may now not even be irreducible, let alone normal; however, if we let $\Lambda_{\bv,n}^1$ be the closure of $\Lambda_{\bv,n}^s$ in $\Lambda_{\bv,n}$ (an irreducible component of $\Lambda_{\bv,n}$), then we still have an isomorphism $\Lambda_{\bv,n}^1/G_\bv\simto \overline{\cO_{\lambda}}$. This all follows from the more general Theorem~\ref{thm:maffei} below; see~\cite{shmelkin} for a description of $\Lambda_{\bv,n}/G_\bv$ in the present setting.

\begin{example}
Take $\fg=\fsl_n$ and $\lambda=(2,1,\cdots,1)$. Instead of the choice used in Example~\ref{ex:sln-minres} we could set $r_1=1$, $r_2=n-1$. Then 
\[
\Lambda_{\bv,n}=\{(A_0,B_0)\in\Hom(\C^n,\C^{n-1})\times\Hom(\C^{n-1},\C^n)\,|\,A_0B_0=0\}
\]
which has multiple irreducible components (for $n\geq 4$). The irreducible component $\Lambda_{\bv,n}^1$ is defined by the extra condition $\rk(B_0)\leq 1$.
\end{example}

\section{Maffei's theorem and its consequences}
\label{sec:maffei}

Maffei's theorem~\cite{maffei} is a generalization of the isomorphism statements in the previous section, that applies to the varieties $\cS_\mu\cap\overline{\cO_{\lambda}}$, where $\mu\trianglelefteq\lambda$ and $\cS_\mu$ means $\cS_X$ for some $X\in\cO_\mu$. We will refer to the point $X$ as the \emph{base-point} of $\cS_\mu\cap\overline{\cO_{\lambda}}$.

As at the end of the previous section, we let $r_1,\cdots,r_m$ be the column-lengths of $\lambda$ in some order and possibly with some zeroes included, allowing us to assume $m>\mu_1$. We define the resolution $\pi_\lambda:\tcF_\lambda\to\overline{\cO_\lambda}$ as before. Let $w_i$ be the multiplicity of $i$ in $\mu$ for $1\leq i\leq m-1$, and let $\bw=(w_1,\cdots,w_{m-1})$. The new definition of $\bv=(v_1,\cdots,v_{m-1})$ is
\[
v_i:=w_1+2w_2+\cdots+iw_i+iw_{i+1}+\cdots+iw_{m-1}-r_1-\cdots-r_i.
\]
Because of our assumptions that $\mu\trianglelefteq\lambda$ and $m>\mu_1$, the quantity $w_1+2w_2+\cdots+iw_i+iw_{i+1}+\cdots+iw_{m-1}$ appearing here is the sum of the lengths of the $i$ longest columns of $\mu$, and $v_i\geq 0$. Note that if $\mu=(1,\cdots,1)$, then $\bw=(n,0,\cdots,0)$ and $\bv$ is the same as in the previous section.

Now we consider the affine variety $\Lambda_{\bv,\bw}$ of all diagram of linear maps
\[
\vcenter{
\xymatrix@R=15pt{
\C^{w_1}\ar@/^/[dd]^-{\Gamma_1}&\C^{w_2}\ar@/^/[dd]^-{\Gamma_2}&\C^{w_3}\ar@/^/[dd]^-{\Gamma_3}&&\C^{w_{m-2}}\ar@/^/[dd]^-{\Gamma_{m-2}}&\C^{w_{m-1}}\ar@/^/[dd]^-{\Gamma_{m-1}}
\\
\\
\C^{v_1}\ar@/^/[r]^-{A_1}\ar@/^/[uu]^-{\Delta_1}&\C^{v_2}\ar@/^/[l]^-{B_1}\ar@/^/[r]^-{A_2}\ar@/^/[uu]^-{\Delta_2}&\C^{v_3}\ar@/^/[l]^-{B_2}\ar@/^/[r]^-{A_3}\ar@/^/[uu]^-{\Delta_3}&\cdots\ar@/^/[r]^-{A_{m-3}}\ar@/^/[l]^-{B_{3}}&\C^{v_{m-2}}\ar@/^/[r]^-{A_{m-2}}\ar@/^/[l]^-{B_{m-3}}\ar@/^/[uu]^-{\Delta_{m-2}}&\C^{v_{m-1}}\ar@/^/[l]^-{B_{m-2}}\ar@/^/[uu]^(.6){\Delta_{m-1}}
}
}
\]
satisfying the equations $A_{i-1}B_{i-1}+\Gamma_i\Delta_i=B_{i}A_{i}$ for $1\leq i\leq m-1$, where we interpret $A_0,B_0,A_{m-1},B_{m-1}$ as $0$. Again we have a natural action of $G_\bv$ on $\Lambda_{\bv,\bw}$. If $\mu=(1,\cdots,1)$, then we can clearly identify $\Lambda_{\bv,\bw}$ with the previous $\Lambda_{\bv,n}$, with $\Gamma_1$ and $\Delta_1$ becoming $A_0$ and $B_0$ respectively.

The generalization of $\Lambda_{\bv,n}^s$ is the open subset $\Lambda_{\bv,\bw}^s$ of $\Lambda_{\bv,\bw}$ defined by the `stability' condition that there is no proper subspace of $\bigoplus_i\C^{v_i}$ that contains $\bigoplus_i\im(\Gamma_i)$ and is stable under all the maps $A_i,B_i$. Let $\Lambda_{\bv,\bw}^1$ be the closure of $\Lambda_{\bv,\bw}^s$ in $\Lambda_{\bv,\bw}$. It follows from the next result that $\Lambda_{\bv,\bw}^s$ is nonempty, and it is clearly stable under simultaneous scaling of all the maps $A_i,B_i,\Gamma_i,\Delta_i$, so the point $0\in\Lambda_{\bv,\bw}$ belongs to $\Lambda_{\bv,\bw}^1$.

\begin{theorem}[Maffei~{\cite[Theorem 8]{maffei}}] \label{thm:maffei}
With notation as above, we have variety isomorphisms
\[
\begin{split}
\Lambda_{\bv,\bw}^s/G_\bv&\cong \pi_\lambda^{-1}(\cS_\mu\cap\overline{\cO_{\lambda}}),\\
\Lambda_{\bv,\bw}^1/G_\bv&\cong \cS_\mu\cap\overline{\cO_{\lambda}},
\end{split}
\]
under which the map $\Lambda_{\bv,\bw}^s/G_\bv\to\Lambda_{\bv,\bw}^1/G_\bv$ induced by the embedding $\Lambda_{\bv,\bw}^s\to\Lambda_{\bv,\bw}^1$ corresponds to the restriction of $\pi_\lambda$ to $\pi_\lambda^{-1}(\cS_\mu\cap\overline{\cO_{\lambda}})$. The $G_\bv$-orbit of $0\in\Lambda_{\bv,\bw}^1$ corresponds to the base-point of $\cS_\mu\cap\overline{\cO_{\lambda}}$.
\end{theorem}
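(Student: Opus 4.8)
The plan is to establish the isomorphisms first at the smooth level of the resolution $\pi_\lambda$, where they can be pinned down explicitly by linear algebra, and then to descend to the singular level using normality of the Slodowy slice. The case $\mu=(1,\dots,1)$ is the Kraft--Procesi theorem (Theorem~\ref{thm:normality}), which serves both as the model for the construction and, through the normality of $\overline{\cO_\lambda}$, as an input. Fix the base-point $X\in\cO_\mu$ and a Jacobson--Morozov pair $X,Y$ as in Section~\ref{sec:slodowy}, so that $\cS_\mu=\{Z\in\fg\mid[Z-X,Y]=0\}$, abbreviate $\tcF_\lambda^\circ:=\pi_\lambda^{-1}(\cS_\mu\cap\overline{\cO_\lambda})$, and use throughout that $\pi_\lambda\colon\tcF_\lambda\to\overline{\cO_\lambda}$ is a resolution.

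First I would construct, from the linear data, a $G_\bv$-equivariant assignment sending $(A_i,B_i,\Gamma_i,\Delta_i)\in\Lambda_{\bv,\bw}$ to a pair $(Z,(U_i))$. The recipe is to fix once and for all a model of $\C^n$ carrying a nilpotent endomorphism of Jordan type $\mu$ together with an $\fsl_2$-triple, compatible with the decomposition $\C^n\cong\bigoplus_i(\C^i)^{\oplus w_i}$ recorded by $\bw$; the maps $\Gamma_i,\Delta_i$ then read off (a point conjugate to) $X,Y$, while the maps $A_i,B_i$ build a flag $0=U_0\subset\dots\subset U_m=\C^n$ of the type occurring in $\tcF_\lambda$ together with the transversal correction to $X$. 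The point of the moment-map relations $A_{i-1}B_{i-1}+\Gamma_i\Delta_i=B_iA_i$ is that under this dictionary they should translate \emph{exactly} into the two conditions $Z(U_i)\subseteq U_{i-1}$ --- so that $(Z,(U_i))\in\tcF_\lambda$, and hence $Z\in\overline{\cO_\lambda}$ --- and $[Z-X,Y]=0$, i.e.\ $Z\in\cS_\mu$. Equivariance under $G_\bv=\GL_{v_1}\times\dots\times\GL_{v_{m-1}}$ is automatic because $Z$ and $(U_i)$ do not depend on the bases chosen for the spaces $\C^{v_i}$. Restricting to the stable locus $\Lambda_{\bv,\bw}^s$, on which $G_\bv$ acts freely, this gives a morphism $\Lambda_{\bv,\bw}^s/G_\bv\to\tcF_\lambda^\circ$; composing with $\pi_\lambda$, and using that $\cS_\mu\cap\overline{\cO_\lambda}$ is closed, gives a $G_\bv$-invariant morphism $\Phi\colon\Lambda_{\bv,\bw}^1\to\cS_\mu\cap\overline{\cO_\lambda}$, $(A_i,B_i,\Gamma_i,\Delta_i)\mapsto Z$, compatible with the inclusion $\Lambda_{\bv,\bw}^s\hookrightarrow\Lambda_{\bv,\bw}^1$. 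When all the maps vanish, $Z$ is the chosen nilpotent of type $\mu$; normalizing the model so that this is $X$ itself, we get $\Phi(0)=X$, and since the $G_\bv$-orbit of $0$ is a fixed point it is closed in $\Lambda_{\bv,\bw}^1$ --- which will yield the last assertion.

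Next I would show that $\Lambda_{\bv,\bw}^s/G_\bv\to\tcF_\lambda^\circ$ is an isomorphism by writing down the inverse: from $(Z,(U_i))\in\tcF_\lambda^\circ$ one recovers the $A_i$ as the projections $\C^n/U_i\to\C^n/U_{i+1}$, the $B_i$ as the maps induced by $Z$, and the framing maps $\Gamma_i,\Delta_i$ by comparing $(Z,(U_i))$ with the fixed $\fsl_2$-model --- and it is exactly here that the hypothesis $Z\in\cS_\mu$, and not merely $Z\in\overline{\cO_\lambda}$, is used. Checking that the two assignments are mutually inverse gives $\Lambda_{\bv,\bw}^s/G_\bv\cong\tcF_\lambda^\circ$; since $\tcF_\lambda^\circ$ is nonempty (it contains $\pi_\lambda^{-1}(X)$, as $X\in\cS_\mu\cap\overline{\cO_\lambda}$ and $\pi_\lambda$ is surjective), this incidentally shows $\Lambda_{\bv,\bw}^s\neq\emptyset$. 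To descend to the singular statement I would use: that $\cS_\mu\cap\overline{\cO_\lambda}$ is normal and $\tcF_\lambda^\circ$ is smooth (both by transversality, Proposition~\ref{prop:slodowy}, given normality of $\overline{\cO_\lambda}$ from Theorem~\ref{thm:normality}); and that $\pi_\lambda$ restricts to a proper birational morphism $\tcF_\lambda^\circ\to\cS_\mu\cap\overline{\cO_\lambda}$, whence $(\pi_\lambda)_*\cO_{\tcF_\lambda^\circ}=\cO_{\cS_\mu\cap\overline{\cO_\lambda}}$. Taking global functions and combining with the isomorphism just established gives $\C[\cS_\mu\cap\overline{\cO_\lambda}]\cong\C[\Lambda_{\bv,\bw}^s]^{G_\bv}$. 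Feeding this back through $\Phi$, whose restriction to $\Lambda_{\bv,\bw}^s$ induces precisely this isomorphism, shows that $\Phi^*\colon\C[\cS_\mu\cap\overline{\cO_\lambda}]\to\C[\Lambda_{\bv,\bw}^1]^{G_\bv}$ is an isomorphism; that is, $\Lambda_{\bv,\bw}^1/G_\bv\cong\cS_\mu\cap\overline{\cO_\lambda}$, and the stated compatibility with $\pi_\lambda$ is inherited from the resolution level.

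The main obstacle is the explicit linear algebra of the first two steps: choosing the model of $\C^n$ and the dictionary so that the assignment and its candidate inverse are well defined and $G_\bv$-equivariant, and --- above all --- verifying that the moment-map relations correspond precisely to $[Z-X,Y]=0$ together with $Z(U_i)\subseteq U_{i-1}$, and that the two assignments really are mutually inverse. This is the substance of~\cite{maffei}. A possible alternative, trading this for a different sort of bookkeeping, is to induct on $m$, peeling off an end vertex of the type-$\A_{m-1}$ quiver by a reflection-functor isomorphism and matching each such step with a corresponding reduction of $\cS_\mu\cap\overline{\cO_\lambda}$, so as to reduce to the Kraft--Procesi case $\mu=(1,\dots,1)$; but that would require its own lemmas relating reflection functors to the geometry of flags and orbit closures, and seems no less delicate.
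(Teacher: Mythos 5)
The paper does not prove this theorem: it is an attributed result, cited from Maffei, and the text even warns that ``Maffei's definition of the isomorphisms is effectively inductive, and as a result much less explicit than in the $\mu=(1,\cdots,1)$ case considered by Kraft and Procesi.'' Measured against that, your proposal has one sound half and one genuinely missing half. The descent step is fine and standard: granting the isomorphism $\Lambda_{\bv,\bw}^s/G_\bv\cong\pi_\lambda^{-1}(\cS_\mu\cap\overline{\cO_\lambda})$, transversality of the slice plus normality of $\overline{\cO_\lambda}$ (Theorem~\ref{thm:normality}) makes $\cS_\mu\cap\overline{\cO_\lambda}$ normal, the restricted resolution is proper and birational, so global functions on the two sides agree, and density of $\Lambda_{\bv,\bw}^s$ in $\Lambda_{\bv,\bw}^1$ lets you upgrade this to $\C[\Lambda_{\bv,\bw}^1]^{G_\bv}\cong\C[\cS_\mu\cap\overline{\cO_\lambda}]$ and hence to the second isomorphism.

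The gap is in the first step, which you describe as an ``explicit dictionary'' under which the relations $A_{i-1}B_{i-1}+\Gamma_i\Delta_i=B_iA_i$ translate \emph{exactly} into $Z(U_i)\subseteq U_{i-1}$ and $[Z-X,Y]=0$. For general $\mu$ this naive assignment does not work: the obvious candidate map (the one that does work when $\mu=(1,\cdots,1)$, where $Z=B_0A_0$ and the $\Gamma_i,\Delta_i$ for $i>1$ are absent) fails to land in the slice and to respect the flag conditions once the extra framing maps $\Gamma_i,\Delta_i$ are present, and the whole difficulty of Maffei's paper is to construct a corrected, inductively defined substitute for it and to prove it is well defined, $G_\bv$-equivariant, and invertible on the stable locus. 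So the step you flag as ``the main obstacle'' is not a routine verification you have deferred; it is the theorem. Your suggested alternative --- peeling off end vertices by reflection-functor-type reductions to get back to the Kraft--Procesi case --- is closer in spirit to how the argument actually has to go, but as stated it too is only a plan. As it stands the proposal is an honest reduction of the theorem to its own hardest ingredient, not a proof.
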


The varieties $\Lambda_{\bv,\bw}^s/G_\bv$ and $\Lambda_{\bv,\bw}^1/G_\bv$ are examples of \emph{Nakajima quiver varieties} (see~\cite{ginzburg,nak1,nak2}), and these isomorphisms were originally conjectured by Nakajima in~\cite{nak1}. Maffei's definition of the isomorphisms is effectively inductive, and as a result much less explicit than in the $\mu=(1,\cdots,1)$ case considered by Kraft and Procesi in~\cite{kp1}.

\begin{example}
Take $\fg=\fsl_3$, $\mu=(2,1)$, and $\lambda=(3)$. Setting $r_1=r_2=r_3=1$, we get $\bv=\bw=(1,1)$.
Hence $\Lambda_{\bv,\bw}$ consists of diagrams
\[
\vcenter{
\xymatrix@R=10pt{
\C^{1}\ar@/^/[dd]^-{\Gamma_1}&\C^{1}\ar@/^/[dd]^-{\Gamma_2}
\\
\\
\C^{1}\ar@/^/[r]^-{A_1}\ar@/^/[uu]^-{\Delta_1}&\C^{1}\ar@/^/[l]^-{B_1}\ar@/^/[uu]^-{\Delta_2}
}
}
\]
with $\Gamma_1\Delta_1=B_1 A_1$, $A_1B_1=-\Gamma_2\Delta_2$. Since all the vector spaces are $1$-dimensional, $\Gamma_i,\Delta_i,A_1,B_1$ themselves constitute functions on $\Lambda_{\bv,\bw}$. It is easy to see that the invariant ring $\C[\Lambda_{\bv,\bw}]^{G_\bv}$ is generated by
\[
a=\Delta_1\Gamma_1=-\Delta_2\Gamma_2,\ b=\Delta_1 B_1\Gamma_2,\ c=\Delta_2 A_1\Gamma_1,
\]
which satisfy the single equation $a^3+bc=0$. Hence $\Lambda_{\bv,\bw}/G_\bv$ is isomorphic to the type-$\A_2$ Kleinian singularity, in accordance with Theorem~\ref{thm:maffei} and Example~\ref{ex:sl3-subreg}. (In this case, $\Lambda_{\bv,\bw}=\Lambda^1_{\bv,\bw}$.) 
\end{example}

The Kleinian singularities of all types $\A_\ell$, $\D_\ell$ and $\E_\ell$ have a uniform construction in terms of Nakajima quiver varieties of the corresponding type~\cite{cass-slod,kronheimer}. Hence the subregular slice $\cS_{\subreg}\cap\cN$ in any simple Lie algebra $\fg$ is isomorphic to a certain Nakajima quiver variety. A partial generalization of this statement to other varieties $\cS_X\cap\cN$ is given in~\cite{hendersonlicata}. 

Returning to type $\A$, Maffei's description of the varieties $\cS_\mu\cap\overline{\cO_{\lambda}}$ in terms of quiver varieties has some easy consequences that relate Slodowy slices in different nilpotent cones. For convenience in stating these consequences, we identify partitions of $n$ with their box diagrams.
\begin{corollary} \label{cor:rowcol}
If $\lambda$ and $\mu$ have the same first row or the same first column, then we have a base-point-preserving isomorphism $\cS_\mu\cap\overline{\cO_{\lambda}}\cong\cS_{\mu'}\cap\overline{\cO_{\lambda'}}$ where $\lambda'$,$\mu'$ are obtained from $\lambda$,$\mu$ by deleting that row/column.
\end{corollary}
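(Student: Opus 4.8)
The plan is to deduce this entirely from Maffei's theorem (Theorem~\ref{thm:maffei}), exploiting the freedom---noted just before that theorem---to list the column-lengths of $\lambda$ in any order and to pad them with extra zeros. The key principle is that if the dimension vector $\bv$ computing $\cS_\mu\cap\overline{\cO_\lambda}$ has an entry equal to $0$, then the corresponding node of the quiver carries the zero vector space: every linear map into or out of it is forced to be $0$, the equation attached to that node is vacuous, and the corresponding factor of $G_\bv$ is trivial. So such a node may be deleted without altering $\Lambda_{\bv,\bw}$, the stability condition cutting out $\Lambda_{\bv,\bw}^s$, the distinguished point $0$, or the quotient; and when the deleted node is a framing node its framing dimension $w_i$ is likewise irrelevant and may be changed at will. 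The strategy is therefore to choose the combinatorial data for $\cS_\mu\cap\overline{\cO_\lambda}$ and for $\cS_{\mu'}\cap\overline{\cO_{\lambda'}}$ so that the two pairs $(\bv,\bw)$ and $(\bv',\bw')$ differ only through one such $0$-node, conclude that the associated Nakajima quiver varieties $\Lambda_{\bv,\bw}^1/G_\bv$ and $\Lambda_{\bv',\bw'}^1/G_{\bv'}$ coincide compatibly with the orbit of $0$, and invoke Theorem~\ref{thm:maffei} at both ends.

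In the ``same first column'' case, let $\ell$ be the common number of rows, take $r_1,\dots,r_m$ to be the column-lengths of $\lambda$ in decreasing order (padded by one extra $0$ if $\mu_1=\lambda_1$), so that $r_1=\ell$, and for $\lambda'$ (namely $\lambda$ with its first column removed) take the tuple $r'$ obtained by deleting this first entry. Since passing from $\mu$ to $\mu'$ subtracts $1$ from every part, one has $(\mu')^{\mathbf t}_j=\mu^{\mathbf t}_{j+1}$, hence $w'_i=w_{i+1}$ and, from the defining formula for $\bv$, $v'_i=v_{i+1}$, whereas the leading entry is $v_1=\mu^{\mathbf t}_1-r_1=\ell-\ell=0$. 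Thus the $(\bv,\bw)$-quiver is exactly the $(\bv',\bw')$-quiver with one isolated $0$-node $\C^{v_1}$ (and its irrelevant framing $\C^{w_1}$) prepended at the left end. In the ``same first row'' case, put $p=\lambda_1=\mu_1$, take $r=(\lambda^{\mathbf t}_1,\dots,\lambda^{\mathbf t}_p,0)$, and for $\lambda'$ (namely $\lambda$ with its first row removed) take $r'=(\lambda^{\mathbf t}_1-1,\dots,\lambda^{\mathbf t}_p-1,0)$, keeping the same $m=p+1$. Now deleting the first row subtracts $1$ from every column of both $\lambda$ and $\mu$, so $(\mu')^{\mathbf t}_j=\mu^{\mathbf t}_j-1$ for $j\le p$, and the formula for $\bv$ yields $v'_i=v_i$ for every $i$, that is $\bv=\bv'$; meanwhile $\mu'$ has lost one part equal to $p$, so $\bw'$ agrees with $\bw$ except that $w'_p=w_p-1$. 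The crucial point is that the last entry is $v_{m-1}=v_p=|\mu|-|\lambda|=0$, so the differing framing sits at a $0$-node and the two quivers again agree.

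With these choices in hand the rest is routine bookkeeping. One checks that the evident identification $\Lambda_{\bv,\bw}\cong\Lambda_{\bv',\bw'}$ (delete the $0$-node, and relabel indices in the column case) carries $\Lambda_{\bv,\bw}^s$ onto $\Lambda_{\bv',\bw'}^s$---immediate, since the stability condition only sees the subspaces $\im(\Gamma_i)$ and the image of a map out of a framing attached to a $0$-node is $0$---hence carries $\Lambda_{\bv,\bw}^1$ onto $\Lambda_{\bv',\bw'}^1$, is equivariant for $G_\bv=G_{\bv'}$, and sends $0$ to $0$. Then Theorem~\ref{thm:maffei} gives $\cS_\mu\cap\overline{\cO_\lambda}\cong\Lambda_{\bv,\bw}^1/G_\bv\cong\Lambda_{\bv',\bw'}^1/G_{\bv'}\cong\cS_{\mu'}\cap\overline{\cO_{\lambda'}}$, base-point-preserving because $0$ corresponds to the base-point on each side. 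The only genuinely delicate step is not an idea but a computation: verifying the dimension-vector identities $v'_i=v_{i+1}$ (resp.\ $v'_i=v_i$) together with $v_1=0$ (resp.\ $v_{m-1}=0$) straight from $v_i=w_1+2w_2+\cdots+iw_i+iw_{i+1}+\cdots+iw_{m-1}-r_1-\cdots-r_i$ while keeping the column-length relabellings straight; the hypothesis $\mu\trianglelefteq\lambda$ is what keeps all entries nonnegative, though here that is automatic since $\bv'$ is a sub-tuple of $\bv$.
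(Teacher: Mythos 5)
Your proof is correct and follows essentially the same route as the paper: choose the column-length data so that the quiver data for $(\lambda,\mu)$ and $(\lambda',\mu')$ differ only at a node where the dimension vector $\bv$ vanishes (namely $v_{m-1}=0$ in the first-row case and $v_1=0$ in the first-column case), identify the corresponding varieties $\Lambda^1_{\bv,\bw}/G_\bv$, and apply Theorem~\ref{thm:maffei} on both sides. Your write-up is in fact slightly more detailed than the paper's (which leaves the column case as ``similar'' and the base-point preservation implicit), and the dimension-vector computations check out.
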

\begin{proof}
Suppose $\lambda$ and $\mu$ have the same first row, i.e.\ $\lambda_1=\mu_1$. We can take $m=\lambda_1+1$, and let $r_1,\cdots,r_{m-1}$ be the column-lengths of $\lambda$ in decreasing order, with $r_m=0$. Then $v_{m-1}=0$ and $w_{m-1}>0$ by definition. When we form the $(m-1)$-tuples $\bv'$ and $\bw'$ for $\lambda'$ and $\mu'$ in the same way, we find that $\bv'=\bv$ and $\bw'$ differs from $\bw$ only in that $w_{m-1}'=w_{m-1}-1$. We have an obvious $G_\bv$-equivariant isomorphism $\Lambda_{\bv,\bw}\simto\Lambda_{\bv,\bw'}$, because the fact that $v_{m-1}=0$ forces the maps $\Gamma_{m-1},\Delta_{m-1}$ to be zero whether one works with $w_{m-1}$ or $w_{m-1}'$. This induces an isomorphism $\Lambda_{\bv,\bw}^1/G_\bv\simto\Lambda_{\bv,\bw'}^1/G_\bv$. So Theorem~\ref{thm:maffei} implies the desired isomorphism $\cS_\mu\cap\overline{\cO_{\lambda}}\cong\cS_{\mu'}\cap\overline{\cO_{\lambda'}}$. The proof when $\lambda$ and $\mu$ have the same first column is similar, but with $v_1=0$ rather than $v_{m-1}=0$.     
\end{proof}

If $\cO_\mu\prec\cO_\lambda$ is a minimal degeneration as described in Proposition~\ref{prop:sln-mindeg}, then one can apply Corollary~\ref{cor:rowcol} repeatedly to remove all the common rows and columns, leaving a minimal degeneration $\cO_{\mu^*}\prec\cO_{\lambda^*}$ where either $\mu^*=(1,\cdots,1)$ or $\lambda^*$ has a single part. This proves Theorem~\ref{thm:kp-mindeg}. Well before Theorem~\ref{thm:maffei}, Kraft and Procesi proved in~\cite{kp2} a slightly weaker form of Corollary~\ref{cor:rowcol}, with smooth equivalence instead of isomorphism; this is how they proved their version of Theorem~\ref{thm:kp-mindeg}. 

In view of Theorem~\ref{thm:juteau-mautner}, either form of Corollary~\ref{cor:rowcol} immediately implies the following known result in representation theory, as was observed by Juteau~\cite{juteau}:

\begin{corollary}[James~\cite{james:iii}]
Let $p$ be a prime. If $\lambda$ and $\mu$ have the same first row or the same first column, then $d_{\lambda\mu}^p=d_{\lambda'\mu'}^p$ where $\lambda'$,$\mu'$ are obtained from $\lambda$,$\mu$ by deleting that row/column.
\end{corollary}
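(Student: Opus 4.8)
The plan is to deduce the statement immediately by feeding Corollary~\ref{cor:rowcol} into Theorem~\ref{thm:juteau-mautner}, after first disposing of a trivial case. The whole point is that all the real content has already been packaged into those two results, so the proof is a short bookkeeping argument.

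First I would handle the case $\mu\not\trianglelefteq\lambda$. Here $d_{\lambda\mu}^p=0$ by the classical fact (recalled just before Theorem~\ref{thm:juteau-mautner}) that $d_{\lambda\mu}^p=0$ unless $\mu\trianglelefteq\lambda$. Deleting a common first row (respectively first column) of $\lambda$ and $\mu$ changes every partial sum $\lambda_1+\cdots+\lambda_i$ and $\mu_1+\cdots+\mu_i$ by the same amount, so it preserves the failure of the dominance inequalities; hence $\mu'\not\trianglelefteq\lambda'$ and $d_{\lambda'\mu'}^p=0$ as well, and the equality holds trivially.

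So I may assume $\mu\trianglelefteq\lambda$, which is exactly the hypothesis needed both for Theorem~\ref{thm:juteau-mautner} and, implicitly via Theorem~\ref{thm:maffei}, for Corollary~\ref{cor:rowcol}. By Corollary~\ref{cor:rowcol} there is a base-point-preserving isomorphism of varieties
\[
\cS_\mu\cap\overline{\cO_\lambda}\;\cong\;\cS_{\mu'}\cap\overline{\cO_{\lambda'}},
\]
taking the base-point $X\in\cO_\mu$ on the left to a base-point $X'\in\cO_{\mu'}$ on the right. In particular the singularity of $\cS_\mu\cap\overline{\cO_\lambda}$ at $X$ is isomorphic, hence a fortiori smoothly equivalent, to that of $\cS_{\mu'}\cap\overline{\cO_{\lambda'}}$ at $X'$; so the local intersection cohomology groups with coefficients in the field of $p$ elements that govern $d_{\lambda\mu}^p$ via Theorem~\ref{thm:juteau-mautner} agree with those governing $d_{\lambda'\mu'}^p$. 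Invoking Theorem~\ref{thm:juteau-mautner} at both ends (in its $S_n$ form when $\mu$ is $p$-restricted and in Mautner's Schur-algebra form in general) then gives $d_{\lambda\mu}^p=d_{\lambda'\mu'}^p$.

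There is essentially no obstacle. The only points requiring a moment's attention are verifying that the dominance order behaves well under deletion of a common row or column — so that the hypotheses of the cited results are met symmetrically on both sides, and so that the degenerate case really is symmetric — and observing that the invariant extracted from the geometry in Theorem~\ref{thm:juteau-mautner} depends only on the isomorphism class of the pointed variety (indeed only on its smooth-equivalence class), which is precisely what the base-point-preserving isomorphism of Corollary~\ref{cor:rowcol} supplies.
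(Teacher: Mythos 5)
Your argument is exactly the paper's: the corollary is deduced immediately by combining the base-point-preserving isomorphism of Corollary~\ref{cor:rowcol} with the invariance statement of Theorem~\ref{thm:juteau-mautner}. Your extra paragraph disposing of the case $\mu\not\trianglelefteq\lambda$ is a harmless refinement of the same route, not a different approach.
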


James' result was generalized by Donkin~\cite{donkin} to the case where $\lambda$ and $\mu$ admit a `horizontal cut'. This more general result can be deduced from an analogous generalization of Corollary~\ref{cor:rowcol} (the corresponding quiver varieties have $v_i=0$ for some $i$ not necessarily in $\{1,m-1\}$). 

Another consequence of Theorem~\ref{thm:maffei} is:
\begin{corollary} \label{cor:rotate}
Suppose that $\lambda$ and $\mu$ both have at most $t$ nonzero parts, each of which is at most $m$. 
Then we have a base-point-preserving isomorphism $\cS_\mu\cap\overline{\cO_{\lambda}}\cong\cS_{\mu^c}\cap\overline{\cO_{\lambda^c}}$ where $\lambda^c$,$\mu^c$ are obtained from $\lambda$,$\mu$ respectively by taking complements in a $t\times m$ rectangle \textup{(}and rotating through $180^\circ$ to obtain box-diagrams of the normal orientation\textup{)}.
\end{corollary}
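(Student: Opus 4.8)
The plan is to derive Corollary~\ref{cor:rotate} from Maffei's theorem (Theorem~\ref{thm:maffei}) by exhibiting an isomorphism between the relevant Nakajima quiver varieties, exactly in the spirit of the proof of Corollary~\ref{cor:rowcol}. The key point is that the combinatorial data $(\bv,\bw)$ attached to $(\lambda,\mu)$ via the recipe in Section~\ref{sec:maffei} is, up to a harmless reindexing, invariant under complementation in a $t\times m$ rectangle. So first I would fix $m$ as the common bound on the parts and take $t$ as the common bound on the number of parts; I would choose $r_1,\dots,r_m$ to be the column-lengths of $\lambda$, padded with zeroes so that there are exactly $m$ of them, and note that each $r_i\le t$ since $\lambda$ has at most $t$ parts. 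The dimension vector is then $v_i = (\text{sum of the $i$ longest columns of }\mu) - (r_1+\cdots+r_i)$, and $w_i$ is the multiplicity of $i$ in $\mu$.

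The heart of the matter is the following bookkeeping claim: if $r_i^c$ denotes the column-lengths of $\lambda^c$ (the complement of $\lambda$ in the $t\times m$ box) and $w_i^c$ the part-multiplicities of $\mu^c$, then one has $r_i^c = t - r_{m+1-i}$ and $w_i^c = w_{m-i}$ for $1\le i\le m-1$ (with $w_0$ and $w_m$ interpreted appropriately), and — the crucial identity — the resulting dimension vector $\bv^c$ for $(\lambda^c,\mu^c)$ satisfies $v_i^c = v_{m-i}$. This last equality should fall out of a short computation: complementing $\mu$ in the rectangle replaces ``sum of the $i$ longest columns of $\mu$'' by $it - (\text{sum of the $m-i$ longest columns of }\mu)$, while complementing $\lambda$ replaces $r_1+\cdots+r_i$ by $it - (r_1+\cdots+r_{m-i})$; subtracting, the $it$ terms cancel and one is left with exactly $v_{m-i}$. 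I would also want to double-check here that the standing hypotheses of Theorem~\ref{thm:maffei} hold for $(\lambda^c,\mu^c)$ — namely $\mu^c\trianglelefteq\lambda^c$ (which follows from $\mu\trianglelefteq\lambda$ together with the order-reversing property of complementation being compensated by the $180^\circ$ rotation) and the condition $m>\mu_1^c$ (automatic since $\mu^c$ has parts bounded by $m$, and one can enlarge $m$ by one more zero column if needed, which does not disturb anything).

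Granting the identities $v_i^c=v_{m-i}$ and $w_i^c=w_{m-i}$, the ``reversal'' map on quiver data — sending $A_i\mapsto B_{m-i}$, $B_i\mapsto A_{m-i}$, $\Gamma_i\mapsto\Gamma_{m-i}$, $\Delta_i\mapsto\Delta_{m-i}$ (with a sign adjustment if one wants the defining equations $A_{i-1}B_{i-1}+\Gamma_i\Delta_i=B_iA_i$ to match on the nose; replacing $\Gamma$ or $\Delta$ by its negative absorbs the sign) — gives an isomorphism of affine varieties $\Lambda_{\bv,\bw}\simto\Lambda_{\bv^c,\bw^c}$ that is equivariant for the evident identification of $G_\bv$ with $G_{\bv^c}$. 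It visibly carries the stability condition to the stability condition (the condition is symmetric under reversing the chain), hence restricts to $\Lambda_{\bv,\bw}^s\simto\Lambda_{\bv^c,\bw^c}^s$ and then to the closures $\Lambda_{\bv,\bw}^1\simto\Lambda_{\bv^c,\bw^c}^1$, and it sends $0$ to $0$. Passing to quotients by $G_\bv$ and invoking the second isomorphism of Theorem~\ref{thm:maffei} at both ends yields the desired base-point-preserving isomorphism $\cS_\mu\cap\overline{\cO_\lambda}\cong\cS_{\mu^c}\cap\overline{\cO_{\lambda^c}}$.

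The main obstacle I anticipate is purely combinatorial rather than geometric: getting the reindexing conventions exactly right so that the claimed identities $r_i^c=t-r_{m+1-i}$, $w_i^c=w_{m-i}$, and above all $v_i^c=v_{m-i}$ hold with no off-by-one errors — in particular keeping straight how the padding zeroes among the $r_i$, the interpretation of $A_0,B_0,A_{m-1},B_{m-1}$ as $0$, and the $180^\circ$ rotation of the complement interact. Once the indices are pinned down, the identity $v_i^c=v_{m-i}$ is a one-line cancellation, and the rest is the same soft argument (equivariant isomorphism of quiver varieties, restrict to stable locus, take closures, quotient, apply Maffei) used for Corollary~\ref{cor:rowcol}.
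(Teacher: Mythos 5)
Your overall strategy is exactly the paper's: reverse the quiver, verify $v_i^c=v_{m-i}$ and $w_i^c=w_{m-i}$, and transport the resulting isomorphism of quiver varieties through Theorem~\ref{thm:maffei}; the central cancellation you describe for $v_i^c$ is correct. But there is a genuine gap in how you propose to meet the standing hypotheses of Maffei's setup. For the reversal $i\mapsto m-i$ to be a bijection between the node sets of the two quivers, both quivers must have exactly $m-1$ nodes, i.e.\ you must be able to take exactly $m$ column-lengths on each side with $m>\mu_1$ \emph{and} $m>(\mu^c)_1=m-\mu_t$. The first condition says $\mu$ has no part equal to $m$, the second that $\mu$ has exactly $t$ nonzero parts; neither is guaranteed (both can fail, e.g.\ whenever $\mu_1=\lambda_1=m$, or whenever $t$ exceeds the number of nonzero parts of $\mu$). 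Your proposed fix --- ``enlarge $m$ by one more zero column'' --- does not work: either you enlarge the rectangle, which changes $\lambda^c,\mu^c$ and hence the statement being proved, or you keep the rectangle and merely pad the list of $r_i$'s, which adds to one quiver an extra node (with $v=0$ and, when $\mu_1=m$, with $w>0$) that has no partner under $i\mapsto m-i$, so the two quivers are no longer identified by reversal.

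The paper closes this gap with a preliminary reduction via Corollary~\ref{cor:rowcol}. Dominance (with $|\mu|=|\lambda|$) shows that $\mu_1=m$ forces $\lambda_1=\mu_1=m$, and that $\mu_t=0$ forces $\lambda_t=\mu_t=0$, i.e.\ that $\lambda^c,\mu^c$ share a first row of length $m$; so one repeatedly deletes common first rows from $(\lambda,\mu)$ and from $(\lambda^c,\mu^c)$ --- each such deletion leaves the other pair unchanged --- until $\mu_1<m$ and $(\mu^c)_1<m$. After that reduction your argument goes through essentially verbatim (modulo the index slip $A_i\mapsto B_{m-i}$, which should be $A_i\mapsto B_{m-1-i}$ since $A_i$ joins nodes $i$ and $i+1$; you flagged this risk yourself, and the paper also records that negating all the $B_i$ fixes the signs in the relations). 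So the one missing ingredient is precisely this initial invocation of Corollary~\ref{cor:rowcol}.
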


\begin{example}
If $\lambda=(5,4,4,3)$ and $\mu=(3,3,3,3,2,2)$, then we can take $t=6$ and $m=5$, producing $\lambda^c=(5,5,2,1,1)$ and $\mu^c=(3,3,2,2,2,2)$. In the following picture, the boxes of $\lambda^c,\mu^c$ are those containing dots.
\[
\begin{split}
\begin{tableau}
\row{\c\c\c\c\c}
\row{\c\c\c\c}
\row{\c\c\c\c}
\row{\c\c\c}
\row{\genblankbox{1}{1}}
\row{\genblankbox{1}{1}}
\end{tableau}
\quad\rightsquigarrow\quad
\begin{tableau}
\row{\c\c\c\c\c}
\row{\c\c\c\c\ghost}
\row{\c\c\c\c\ghost}
\row{\c\c\c\ghost\ghost}
\row{\ghost\ghost\ghost\ghost\ghost}
\row{\ghost\ghost\ghost\ghost\ghost}
\end{tableau}
\qquad&\rightsquigarrow\quad
\begin{tableau}
\row{\ghost\ghost\ghost\ghost\ghost}
\row{\ghost\ghost\ghost\ghost\ghost}
\row{\ghost\ghost}
\row{\ghost}
\row{\ghost}
\row{\genblankbox{1}{1}}
\end{tableau}
\\
\begin{tableau}
\row{\c\c\c\genblankbox{1}{1}\genblankbox{1}{1}}
\row{\c\c\c}
\row{\c\c\c}
\row{\c\c\c}
\row{\c\c}
\row{\c\c}
\end{tableau}
\quad\rightsquigarrow\quad
\begin{tableau}
\row{\c\c\c\ghost\ghost}
\row{\c\c\c\ghost\ghost}
\row{\c\c\c\ghost\ghost}
\row{\c\c\c\ghost\ghost}
\row{\c\c\ghost\ghost\ghost}
\row{\c\c\ghost\ghost\ghost}
\end{tableau}
\qquad&\rightsquigarrow\quad
\begin{tableau}
\row{\ghost\ghost\ghost}
\row{\ghost\ghost\ghost}
\row{\ghost\ghost}
\row{\ghost\ghost}
\row{\ghost\ghost}
\row{\ghost\ghost}
\end{tableau}
\end{split}
\]
\end{example}

\begin{proof}
We can assume that $\mu\neq\lambda$, since otherwise the claim is trivial. After applying Corollary~\ref{cor:rowcol} as many times as necessary both to $\lambda,\mu$ and to $\lambda^c,\mu^c$, we can assume that $\lambda_1=(\lambda^c)_1=m$, $\mu_1<m$, $(\mu^c)_1<m$. Set $r_1,\cdots,r_m$ to be the column-lengths of $\lambda$ in decreasing order, and so define $\bv,\bw$. If we define $\bv^c,\bw^c$ similarly, then $v_i^c=v_{m-i}$ and $w_i^c=w_{m-i}$ for all $i$. We thus have an isomorphism $\Lambda_{\bv,\bw}\simto\Lambda_{\bv^c,\bw^c}$ that interchanges the roles of the maps $\Gamma_i$ and $\Gamma_{m-i}$, the maps $\Delta_i$ and $\Delta_{m-i}$, and the maps $A_i$ and $B_{m-1-i}$ for all $i$, and changes the sign of all the maps $B_i$ in order to preserve the defining equations. This induces an isomorphism $\Lambda_{\bv,\bw}^1/G_\bv\simto\Lambda_{\bv^c,\bw^c}^1/G_{\bv^c}$. So Theorem~\ref{thm:maffei} implies the desired isomorphism $\cS_\mu\cap\overline{\cO_{\lambda}}\cong\cS_{\mu^c}\cap\overline{\cO_{\lambda^c}}$.
\end{proof}

As mentioned in the introduction, Corollary~\ref{cor:rotate} provides (via Theorem~\ref{thm:juteau-mautner} again) a geometric proof of another known result in representation theory:

\begin{corollary}[Fang--Henke--Koenig~{\cite[Corollary 7.1]{fanghenkekoenig}}]
Let $p$ be a prime. With notation as in Corollary~\ref{cor:rotate}, we have $d_{\lambda\mu}^p=d_{\lambda^c\mu^c}^p$.
\end{corollary}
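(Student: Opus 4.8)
The plan is to obtain the equality by combining Corollary~\ref{cor:rotate} with Theorem~\ref{thm:juteau-mautner}; no further geometry is needed, since all of it has already been packaged into those two statements. The one combinatorial fact I would record first is that rectangular complementation preserves the dominance order, i.e.\ $\mu\trianglelefteq\lambda$ if and only if $\mu^c\trianglelefteq\lambda^c$. This follows from the identity $(\lambda^c)_1+\cdots+(\lambda^c)_k=km-|\lambda|+(\lambda_1+\cdots+\lambda_{t-k})$ for the partial sums of a complement in the $t\times m$ rectangle: since $|\lambda|=|\mu|$, subtracting the analogous formula for $\mu^c$ shows that the $k$th partial-sum inequality for $\mu^c\trianglelefteq\lambda^c$ is equivalent to the $(t-k)$th one for $\mu\trianglelefteq\lambda$, and letting $k$ run over $0,\dots,t$ gives the claim (parts beyond the $t$th are zero on both sides, so contribute nothing).

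With that in hand, the case $\mu\not\trianglelefteq\lambda$ is immediate: then $d_{\lambda\mu}^p=0$ by the classical fact, recalled in the excerpt, that $d_{\lambda\mu}^p$ vanishes unless $\mu\trianglelefteq\lambda$; and likewise $d_{\lambda^c\mu^c}^p=0$ since $\mu^c\not\trianglelefteq\lambda^c$, so both sides are zero. So I may assume $\mu\trianglelefteq\lambda$, hence also $\mu^c\trianglelefteq\lambda^c$. Choose base-points $X\in\cO_\mu$ and $X^c\in\cO_{\mu^c}$. By Corollary~\ref{cor:rotate} there is a base-point-preserving isomorphism of varieties $\cS_\mu\cap\overline{\cO_\lambda}\simto\cS_{\mu^c}\cap\overline{\cO_{\lambda^c}}$ carrying $X$ to $X^c$; being base-point-preserving, it identifies the singularity of $\cS_\mu\cap\overline{\cO_\lambda}$ at $X$ with that of $\cS_{\mu^c}\cap\overline{\cO_{\lambda^c}}$ at $X^c$, in particular their smooth-equivalence classes, and hence the relevant local intersection cohomology groups with coefficients in the field with $p$ elements. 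By Theorem~\ref{thm:juteau-mautner}, $d_{\lambda\mu}^p$ depends only on $p$ and the singularity of $\cS_\mu\cap\overline{\cO_\lambda}$ at $X$, and $d_{\lambda^c\mu^c}^p$ depends only on $p$ and the singularity of $\cS_{\mu^c}\cap\overline{\cO_{\lambda^c}}$ at $X^c$; since these singularities coincide, $d_{\lambda\mu}^p=d_{\lambda^c\mu^c}^p$.

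The proof is thus essentially bookkeeping, with all the substance residing in Corollary~\ref{cor:rotate} (ultimately Maffei's Theorem~\ref{thm:maffei}) and in Theorem~\ref{thm:juteau-mautner} (the work of Juteau and Mautner). The only point that genuinely needs attention is making the two cited statements dock correctly: one must use that Corollary~\ref{cor:rotate} provides a \emph{base-point-preserving} isomorphism, so that the singularity and not merely the underlying variety is transported, and one must check that the hypothesis $\mu\trianglelefteq\lambda$ of Theorem~\ref{thm:juteau-mautner} holds on both sides of the equality at once — which is precisely the order-preservation of rectangular complementation noted at the start. I do not expect any obstacle beyond this.
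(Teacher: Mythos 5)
Your proposal is correct and takes essentially the same route as the paper, which proves this corollary in a single line by combining Corollary~\ref{cor:rotate} with Theorem~\ref{thm:juteau-mautner}. The bookkeeping you add (rectangular complementation preserves the dominance order, so both decomposition numbers vanish together when $\mu\not\trianglelefteq\lambda$, and the base-point-preserving isomorphism is what transports the singularity) is exactly the detail the paper leaves implicit.
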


\section*{Acknowledgements}
I am very grateful to the organizers of the 2013 Japanese--Australian Workshop on Real and Complex Singularities (L.~Paunescu, A.~Harris, A.~Isaev, S.~Koike and T.~Fukui) for the opportunity to give the lecture series that gave rise to this article. My view of the subject has been strongly influenced by my collaborators, especially P.~Achar, D.~Juteau and E.~Sommers, the last of whom gave me valuable feedback on a draft of this article. Thanks are also due to my colleague A.~Mathas for many enlightening conversations about decomposition numbers, and to N.~Saunders for his helpful comments.


\begin{thebibliography}{99}

\bibitem{brionfu}
M.~Brion and B.~Fu, {\em Symplectic resolutions for conical symplectic varieties}, Int.\ Math.\ Res.\ Not.\ IMRN, published online May 2014, doi:10.1093/imrn/rnu067.

\bibitem{broer}
A.~Broer, {\em Normal nilpotent varieties in $F_4$}, J. Algebra {\bf 207}
  (1998), no.~2, 427--448.

\bibitem{cass-slod}
H.~Cassens and P.~Slodowy, \emph{On Kleinian singularities and quivers}, in \emph{Singularities (Oberwolfach, 1996)}, Progr.\ Math.\ \textbf{162}, Birkh\"auser, Basel, 1998, 263--288.

\bibitem{cm}
D.~H.~Collingwood and W.~M.~McGovern, {\em Nilpotent orbits in semisimple {L}ie
  algebras}, Van Nostrand Reinhold Co., New York, 1993.
  
\bibitem{donkin}
S.~Donkin, {\em A note on decomposition numbers for general linear groups and symmetric groups}, Math.\ Proc.\ Camb.\ Phil.\ Soc.\ \textbf{97} (1985), 57--62.  
  
\bibitem{fanghenkekoenig}
M.~Fang, A.~Henke and S.~Koenig, {\em Comparing $\mathrm{GL}_n$-representations by characteristic-free isomorphisms between generalised Schur algebras}, with an appendix by S.~Donkin, Forum Math.\ \textbf{20} (2008), no.~1, 45--79.

\bibitem{fu}
B.~Fu, {\em A survey on symplectic singularities and symplectic resolutions},
Ann.\ Math.\ Blaise Pascal \textbf{13} (2006), no.~2, 209–-236.  

\bibitem{fjls}
B.~Fu, D.~Juteau, P.~Levy and E.~Sommers, {\em Generic singularities of nilpotent orbit closures}, in preparation.

\bibitem{gerstenhaber}
M.~Gerstenhaber, {\em On dominance and varieties of commuting matrices}, Ann.\ of Math.\ \textbf{73} (1961), 324--348.
  
\bibitem{ginzburg}
V.~Ginzburg, {\em Lectures on Nakajima's quiver varieties}, in \emph{Geometric methods in representation theory, I}, S\'emin.\ Congr.\ \textbf{24-I}, Soc.\ Math.\ France, Paris, 2012, 145--219. 

\bibitem{hendersonlicata}
A.~Henderson and A.~Licata, {\em Diagram automorphisms of quiver varieties}, Adv.\ Math.\ \textbf{267} (2014), 225--276.
  
\bibitem{humphreys}
J.~E.~Humphreys, {\em Introduction to Lie algebras and representation theory}, Graduate Texts in Math.\ \textbf{9}, Springer-Verlag, New York, 1992.  
  
\bibitem{james}
G.~D.~James, {\em The irreducible representations of the symmetric groups}, Bull.\ London Math.\ Soc.\ \textbf{8} (1976), no.~3, 229--232.

\bibitem{james:iii}
G.~D.~James, {\em On the decomposition matrices of the symmetric groups, III}, J.\ Algebra \textbf{71} (1981), 115--122.
  
\bibitem{juteau}
D.~Juteau, {\em Modular Springer correspondence, decomposition matrices and basic sets},
  arXiv:1410.1471.
  
\bibitem{jmw}
D.~Juteau, C.~Mautner and G.~Williamson, {\em Perverse sheaves and modular representation theory}, in \emph{Geometric methods in representation theory, II}, S\'emin.\ Congr.\ \textbf{24-II}, Soc.\ Math.\ France, Paris, 2012, 315--352.  

\bibitem{kaledin}
D.~Kaledin, {\em Symplectic singularities from the Poisson point of view}, J.\ Reine angew.\ Math.\ \textbf{600} (2006), 135--156.
  
\bibitem{kostant}
B.~Kostant, \emph{Lie group representations on polynomial rings}, Amer.\ J.\ Math.\ \textbf{85} (1963), 327--404.

\bibitem{kraft}
H.~Kraft, \emph{Closures of conjugacy classes in $G_2$}, J.\ Algebra \textbf{126} (1989), no.~2, 454--465.  

\bibitem{kp1}
H.~Kraft and C.~Procesi, \emph{Closures of conjugacy classes of matrices are normal}, Invent.\ Math.\ \textbf{53} (1979), no.~3, 224--247.

\bibitem{kp2}
H.~Kraft and C.~Procesi, \emph{Minimal singularities in $GL_n$}, Invent.\ Math.\ \textbf{62} (1981), no.~3, 503--515.

\bibitem{kp3}
H.~Kraft and C.~Procesi, \emph{On the geometry of conjugacy classes in classical groups}, Comment.\ Math.\ Helv.\ \textbf{57} (1982), no.~4, 539--602.

\bibitem{kronheimer}
P.~B.~Kronheimer,
{\em The construction of ALE spaces as hyper-K\"ahler quotients},
J.\ Differential Geom.\ {\bf 29} (1989), no.~3, 665--683.

\bibitem{lns}
M.~Lehn, Y.~Namikawa and Ch.~Sorger, \emph{Slodowy slices and universal Poisson deformations}, Compos.\ Math.\ \textbf{148} (2012), no.~1, 121--144.

\bibitem{maffei}
A.~Maffei, \emph{Quiver varieties of type A}, Comment.\ Math.\ Helv.\ \textbf{80} (2005), no.~1, 1--27. 

\bibitem{mathas}
A.~Mathas, \emph{Iwahori--Hecke algebras and Schur algebras of the symmetric group}, University Lecture Series \textbf{15}, Amer.\ Math.\ Soc., Providence, 1999.

\bibitem{mautner}
C.~Mautner, \emph{A geometric Schur functor}, Selecta Math.\ (N.S.) \textbf{20} (2014), no.~4, 961--977.

\bibitem{mckay}
J.~McKay, \emph{Graphs, singularities, and finite groups}, in \emph{The Santa Cruz conference on finite groups (Univ.\ California, Santa Cruz, Calif., 1979)},
Proc.\ Sympos.\ Pure Math.\ \textbf{37}, Amer.\ Math.\ Soc., Providence, 1980, 183--186.

\bibitem{nak1}
H.~Nakajima, \emph{Instantons on ALE spaces, quiver varieties, and Kac--Moody algebras},
Duke Math.\ J.\ \textbf{76} (1994), no.~2, 365--416. 

\bibitem{nak2}
H.~Nakajima, \emph{Quiver varieties and Kac--Moody algebras},
Duke Math.\ J.\ \textbf{91} (1998), no.~3, 515--560.

\bibitem{namikawa}
Y.~Namikawa, {\em On the structure of homogeneous symplectic varieties of complete intersection}, Invent.\ Math.\ \textbf{193} (2013), no.~1, 159--185.

\bibitem{panyushev}
D.~I.~Panyushev, \emph{Rationality of singularities and the Gorenstein property of nilpotent orbits}, Funct.\ Anal.\ Appl\ \textbf{25} (1992), no.~3, 225--226.

\bibitem{shmelkin}
D.~A.~Shmelkin, \emph{Some remarks on Nakajima's quiver varieties of type A}, in \emph{Geometric methods in representation theory, II}, S\'emin.\ Congr.\ \textbf{24-II}, Soc.\ Math.\ France, Paris, 2012, 419--427. 

\bibitem{slod}
P.~Slodowy, \emph{Simple singularities and simple algebraic groups}, Lecture Notes in Math., vol.~815, Springer-Verlag, 1980.

\bibitem{sommers1}
E.~Sommers, {\em Normality of nilpotent varieties in $E_6$}, J.\ Algebra {\bf
  270} (2003), no.~1, 288--306.
  
\bibitem{sommers2}
E.~Sommers, {\em Normality of very even nilpotent varieties in $D_{2l}$}, Bull.\ London Math.\ Soc. \textbf{37} (2005), no.~3, 351--360.  

\end{thebibliography}
\end{document}